\let\uppercasenonmath\@gobble
\titleformat{\section}{\bfseries\center}{\thesection.}{0.4em}{}
\titleformat{\subsection}{\vspace{.08cm}\bfseries}{\thesubsection.}{0.4em}{}
\newtheorem{prop}{Proposition}[section]
\newtheorem{teo}[prop]{Theorem}
\newtheorem{lem}[prop]{Lemma}
\newtheorem{cor}[prop]{Corollary}
\theoremstyle{definition}
\newtheorem{nada}[prop]{}
\newtheorem{defi}[prop]{Definition}
\newtheorem{example}[prop]{Example}
\newtheorem{rmk}[prop]{Remark}
\theoremstyle{theorem}
\newcommand{\eps}{\varepsilon}
\newcommand{\inte}{\text{{\scriptsize$\int_0^1$}}}
\def\Ho{\mathrm{Ho}}
\def\Dec{\mathrm{Dec}}
\def\Ker{\mathrm{Ker }}
\newcommand{\pb}{\ar@{}[dr]|{\mbox{\LARGE{$\lrcorner$}}}}
\newcommand{\hto}{\rightsquigarrow}
\newcommand{\xra}[1]{\xrightarrow{#1}}
\newcommand{\dga}[2]{\mathsf{DGA}^{#1}_{#2}}
\newcommand{\Fdga}[2]{\mathbf{F}\mathsf{DGA}^{#1}_{#2}}
\newcommand{\FFdga}[2]{\mathbf{F}^2\mathsf{DGA}^{#1}_{#2}}
\newcommand{\Sdga}[2]{\mathbf{S}\mathsf{min}^{#1}_{#2}}
\newcommand{\GCc}{\Gamma\Cc}
\newcommand{\GCch}{\Gamma\Cc^h}
\newcommand{\pGCch}{\pi^h\Gamma\Cc}
\newcommand{\DFAlg}[1]{\Gamma\mathbf{A}^{#1}}
\newcommand{\DFCx}{\Gamma\mathbf{C}}
\newcommand{\MHD}{\mathbf{MHD}}
\newcommand{\MHSA}{\mathbf{MH}{\mathsf{DGA}}_{\normalfont{min}}}
\newcommand{\MHC}{\mathbf{MHC}}
\newcommand{\MHS}{\mathsf{MHS}}
\newcommand{\wt}{\widetilde}
\newcommand{\lra}{\longrightarrow}
\newcommand{\ov}[1]{\overline{#1}}
\newcommand{\CC}{\mathbb{C}}
\newcommand{\QQ}{\mathbb{Q}}
\newcommand{\RR}{\mathbb{R}}
\newcommand{\ZZ}{\mathbb{Z}}
\newcommand{\Cc}{\mathcal{C}}
\newcommand{\Dd}{\mathcal{D}}
\newcommand{\Ee}{\mathcal{E}}
\newcommand{\Ff}{\mathcal{F}}
\newcommand{\Hh}{\mathcal{H}}
\newcommand{\Ll}{\mathcal{L}}
\newcommand{\Mm}{\mathcal{M}}
\newcommand{\Pp}{\mathcal{P}}
\newcommand{\Qq}{\mathcal{Q}}
\newcommand{\Ss}{\mathcal{S}}
\newcommand{\Ww}{\mathcal{W}}
\newcommand{\kk}{\mathbf{k}}
\newcommand{\Sch}[2]{\mathbf{Sch}^{#1}(#2)}
\title[\sc \normalsize cofibrant models of diagrams: mhs in rational homotopy]{\Large{Cofibrant models of diagrams:\\ mixed Hodge structures in rational homotopy
}}
\author[\sc \normalsize joana cirici] {\large Joana Cirici}
\address{
Fachbereich Mathematik und Informatik\\
Freie Universit\"{a}t Berlin\\  Arnimallee 3\\ 
14195 Berlin}
\email{jcirici@math.fu-berlin.de}
\thanks{Financial support by
the Dahlem Research School - Marie Curie Action through PCOFUND-GA-2010-267228.
}
\subjclass[2010]{18G55, 32S35, 55P62}
\keywords{homotopical algebra, mixed Hodge theory, rational homotopy, diagram categories, mixed Hodge diagrams, minimal models, Cartan-Eilenberg categories.}
\date{\today}
\begin{document}
\maketitle
\begin{abstract}
We study the homotopy theory of a certain type of diagram categories
whose vertices are in variable categories with a functorial path, leading to a 
good calculation of the homotopy category in terms of cofibrant objects.
The theory is applied to the category of mixed Hodge diagrams of differential graded algebras.
Using Sullivan's minimal models, we prove a multiplicative version of Beilinson's Theorem on mixed Hodge complexes. As a consequence, we obtain
functoriality for the mixed Hodge structures on the rational homotopy type of complex algebraic varieties.
In this context, the mixed Hodge structures on homotopy groups
obtained by Morgan's theory follow from the derived functor of the indecomposables of mixed Hodge diagrams.
\end{abstract}

\section{Introduction}
Since the development of Sullivan’s rational homotopy theory \cite{Su}, minimal models
have found significant applications
of both topological and geometric origin,
one of the first and most striking being the Formality Theorem 
of Deligne-Griffiths-Morgan-Sullivan \cite{DGMS} for compact K\"{a}hler manifolds.
Also using Sullivan's minimal models and based on Deligne's mixed Hodge theory \cite{DeHII},
Morgan \cite{Mo} proved the existence of functorial mixed Hodge structures on the rational homotopy groups of
smooth complex algebraic varieties. 
His results were independently extended to the singular case by Hain \cite{Ha} and Navarro \cite{Na}.
Both works depend on the initial constructions of Morgan.
The main objects under consideration in Morgan's theory are mixed Hodge diagrams of differential graded algebras, a multiplicative
analogue of the mixed Hodge complexes of Deligne \cite{DeHIII} involving differential graded algebras over $\QQ$ and $\CC$ respectively
and encoding the weight and Hodge filtrations up to filtered quasi-isomorphisms.

Bousfield and Gugenheim \cite{BG} reformulated Sullivan's rational homotopy theory
of differential graded algebras
in the context of Quillen model categories \cite{Q1}.
Following this line, it would be desirable to establish an analogous formulation
for mixed Hodge diagrams of differential graded algebras.
In this sense, though sufficient for its original purposes, Morgan's theory of mixed Hodge diagrams is incomplete, 
since it provides the existence of certain minimal models,
but these are not shown to be cofibrant or minimal in any abstract categorical framework.
Moreover, Morgan allows morphisms between diagrams to be homotopy commutative and does not claim any composition law.
As a consequence, his results fall out of the realm of categories.
This is one aspect that motivates the present work.

Driven by motivic and Deligne cohomology,
Beilinson \cite{Be} studied the homotopy category of mixed Hodge complexes and proved that it is equivalent 
to the derived category of mixed Hodge structures, allowing an interpretation of
Deligne's cohomology in terms of extensions of mixed Hodge structures.
Our objective in this paper is to prove a multiplicative analogue of Beilinson's equivalence, allowing to understand the results of
Deligne, Beilinson, Morgan, Hain and Navarro within a common homotopical framework.

The axioms for Quillen's model categories are very powerful and they provide, not only a precise description of
the maps in the homotopy category, but also other higher homotopical structures. As a counterpart,
there exist interesting categories from the homotopical 
point of view, which do not satisfy all the axioms.
This is the case of diagram categories involving filtrations, where more specific techniques have to be introduced.

In this paper we develop an abstract homotopy theory for certain diagram categories with vertices in variable categories.
We follow the homotopical approach of Cartan-Eilenberg categories introduced by 
Guill\'{e}n-Navarro-Pascual-Roig
\cite{GNPR}, a weaker framework than the one provided by Quillen
model structures, but sufficient to study homotopy categories and to extend
the classical theory of derived additive functors, to non-additive settings.
An important observation is that in this setting,
one can consider minimal models as a
particular case of cofibrant ones,
parallel to Sullivan's theory.
The theory is then applied to the category of mixed Hodge diagrams of differential graded algebras:
we describe morphisms in the homotopy
category of mixed Hodge diagrams in terms of certain homotopy classes of morphisms between Sullivan minimal algebras carrying mixed Hodge structures.
Together with Navarro's functorial construction of mixed Hodge diagrams \cite{Na}, this gives functoriality for the mixed Hodge structures
on the rational homotopy type of complex algebraic varieties.
In this context, the mixed Hodge structures on the rational homotopy groups
obtained by Morgan's theory follow from the derived functor of the indecomposables of mixed Hodge diagrams. Our approach can be applied to broader settings,
such as the study of complex analytic spaces with a class of compactifications, for which the Hodge and weight filtrations
can be defined (see \cite{GN} and \cite{To}), but do not satisfy the strong properties of mixed Hodge theory.

The present paper is a natural extension of previous works written by the author jointly with F. Guill\'{e}n:
here we generalize the results of \cite{CG2} to the multiplicative setting, using
the construction of minimal models for mixed Hodge diagrams appearing in \cite{CG1}.
\\

We describe the content of the different sections. Let $I$ be a small category and 
let $\Cc:I\to \mathsf{Cat}$ be a functor with values in the category of categories. 
Denote by $\int_I\Cc$ the Grothendieck construction of the functor $\Cc$ (see e.g. \cite{Th}). 
The category of diagrams $\GCc$ associated with $\Cc$ is defined as the category of sections of the projection $\int_I\Cc\to I$.
Objects of $\GCc$ are given by families of objects $\{A_i\in\Cc(i)\}$ for every $i\in I$, together with morphisms
$\{\varphi_u:\Cc(u)(A_i)\to A_j\}$ for every map $u:i\to j$ of $I$. Morphisms of $\GCc$ are families 
of level-wise morphisms in $\Cc(i)$ making the corresponding diagrams commute.
It is already an important question in abstract homotopy theory to know whether given compatible homotopical structures on the categories $\Cc(i)$,
there exists an induced homotopical structure on $\GCc$ with level-wise weak equivalences.
For categories of diagrams $\Cc^I$ associated with a constant functor there are partial answers 
in terms of Quillen model structures: if $\Cc$ is cofibrantly generated,
or $I$ has a Reedy structure, then the category $\Cc^I$ inherits a level-wise model structure
(see for example \cite{Hov}, Theorem 5.2.5).
It is also well known that if $\Cc$ is a Brown category of (co)fibrant objects \cite{Br}, then $\Cc^I$ inherits a Brown category structure,
with weak equivalences and (co)fibrations defined level-wise.
Here we study this question in the context of Cartan-Eilenberg categories
and provide a positive answer for a certain type of diagram categories
whose vertices are endowed with a functorial path.

In Section 2 we introduce P-categories and study their homotopy theory. A \textit{P-category} is given by a category
$\Cc$ with a functorial path $P:\Cc\to\Cc$ and two classes of 
morphisms $\Ff$ and $\Ww$ of \textit{fibrations} and \textit{weak equivalences} satisfying certain
axioms close to those of Brown categories of cofibrant objects, together 
with a homotopy lifting property with respect to trivial fibrations (see Definition $\ref{BCE_cat}$).
The functorial path defines a notion of homotopy between morphisms of $\Cc$.
We introduce a notion of cofibrant object and show that
if $C$ is cofibrant, then every weak equivalence $w:A\to B$
induces a bijection
$w_*:[C,A]\to [C,B]$
between homotopy classes of morphisms.
In Theorem $\ref{Pcat_es_CE}$ we show that if every object has a cofibrant model, 
then $\Cc$ admits a Cartan-Eilenberg structure with level-wise weak equivalences.
In particular, the inclusion induces an equivalence of categories
$\pi\Cc_{cof}\stackrel{\sim}{\lra}\Ho(\Cc).$

In Section 3 we develop the basic examples of P-categories: these are the category of topological spaces and the category of
differential graded algebras. We also provide a criterion of structure transfer
and apply it to two fundamental examples appearing in mixed Hodge theory:
 the categories of filtered and bifiltered differential graded algebras.

In Section 4 we study the homotopy theory of diagram categories.
It is quite immediate, that if the vertices of a diagram category $\GCc$ are endowed with compatible P-category structures,
then the diagram category inherits a level-wise P-category
structure. However, the characterization and existence of cofibrant models of diagrams is not straightforward,
and requires a rectification theory of homotopy commutative morphisms.
We focus our study to diagrams indexed by a finite directed category of binary degree (see $\ref{indexcat}$).
This includes the diagrams of zig-zag type appearing in mixed Hodge theory. 
We call \textit{ho-morphisms} those maps between diagrams that commute up to fixed homotopies. In general, ho-morphisms cannot be composed. 
However, the level-wise functorial path of $\GCc$ defines a notion of homotopy between ho-morphisms.
Consider the full subcategory of level-wise cofibrant diagrams. Its objects,
together with the homotopy classes of ho-morphisms define a category $\pGCch_{cof}$ (see Theorem $\ref{equiv_homorf_hoequiv}$).
In Theorem $\ref{diagramesCE0}$ we show that if the categories $\Cc_i$ have enough cofibrant models, then the
category of diagrams $\GCc$ admits a Cartan-Eilenberg structure with the same weak equivalences. In particular
there is an equivalence of categories
$\pGCch_{cof}\stackrel{\sim}{\lra}\Ho(\GCc).$
We also consider a relative situation
suitable to study mixed Hodge diagrams (see Theorem $\ref{diagramesCE}$).

Section 5 is devoted to the applications to multiplicative mixed Hodge theory.
A \textit{mixed Hodge diagram of differential graded algebras}
is given by a filtered dga $(A_\QQ,W)$ defined over $\QQ$, a bifiltered
dga $(A_\CC,W,F)$ defined over $\CC$, together with a string of
filtered quasi-isomorphisms
$(A_\QQ,W)\otimes\CC\longleftrightarrow (A_\CC,W)$ over $\CC$. In addition, the filtrations should satisfy certain axioms,
making the triple $(H(A_\QQ),\Dec W, F)$ into a graded mixed Hodge structure.
Denote by $\MHD$ the
category of mixed Hodge diagrams, and by
 $\MHSA$ the full subcategory of \textit{minimal mixed Hodge dga's}: these are Sullivan minimal dga's $A$ over $\QQ$ with filtrations $W$ on $A$ and $F$
on $A\otimes_\QQ\CC$ such that for each $n\geq 0$ the triple $(A^n,\Dec W,F)$ is a mixed Hodge structure.
In Theorem $\ref{MHDCE}$ we prove a multiplicative version of Beilinsons's Theorem 
on mixed Hodge complexes (Theorem 3.4 of \cite{Be}, see also Theorem 4.11 of \cite{CG2}), by showing
that the category $\MHD$ admits a Cartan-Eilenberg structure with cofibrant minimal models in $\MHSA$, where the weak equivalences are
level-wise quasi-isomorphisms compatible with filtrations.
As a consequence, we obtain an
equivalence of categories
$\pi^h\MHSA\stackrel{\sim}{\lra} \Ho\left(\MHD\right).$
As a corollary we show, using Navarro's functorial construction of mixed Hodge diagrams,
that the rational homotopy type of every complex algebraic variety is endowed
functorial mixed Hodge structures (see Corollary $\ref{mhstipohomotopia}$). This 
solves the lack of functoriality of Morgan's theory. Furthermore, in
Theorem $\ref{leftderivedMHD}$ we derive the functor of indecomposables of 1-connected mixed Hodge diagrams.
This leads to a more precise and alternative construction of functorial mixed Hodge
structures on
the rational homotopy groups of simply connected
varieties (see Corollary $\ref{mhd_mhs}$).

\section{P-categories and Cofibrant Models}
In the present section we introduce P-categories. These are categories with a functorial path and
two distinguished classes of morphisms, called
\textit{fibrations} and \textit{weak equivalences}, satisfying a list of axioms similar to those of Brown categories of fibrant objects \cite{Br}.
We introduce a notion of cofibrant object in terms of a lifting property with respect to trivial fibrations
and prove that every P-category with enough cofibrant models is a Cartan-Eilenberg category with
the same weak equivalences. As a consequence,
the homotopy category is equivalent to the quotient category of cofibrant objects
modulo homotopy.
Basic examples of P-categories are the category of topological spaces or the category of commutative differential graded 
algebras over a field of characteristic zero.

\subsection{Categories with a functorial path}
We recall the main results on categories
with a functorial path (see also Section I.4 of \cite{KP}).
\begin{defi}\label{funct_path}
A \textit{functorial path} on a category $\Cc$ is a functor $P:\Cc\to \Cc$
together with natural transformations
$$\xymatrix{1\ar[r]^-{\iota}&P \ar@<.5ex>[r]^-{\delta^0} \ar@<-.5ex>[r]_-{\delta^1}&1}
\text{ such that } \delta^0\iota=\delta^1\iota=1.
$$

\end{defi}

\begin{defi}\label{homotopia_assoc_path}
Let $f,g:A\to B$ be two morphisms of $\Cc$. A \textit{homotopy from $f$ to $g$} is
a morphism $h:A\to P(B)$ of $\Cc$ such that $\delta^0_Bh=f$ and $\delta^1_Bh=g$.
We use the notation $h:f\simeq g$.
\end{defi}
\begin{lem}[\cite{KP}, Lemma I.2.3]\label{reflexiva}
The homotopy relation defined by a functorial path is reflexive and compatible with the composition.
\end{lem}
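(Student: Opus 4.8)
The plan is to verify both assertions directly from the axioms of a functorial path, the only inputs being the functoriality of $P$ and the naturality of the transformations $\iota,\delta^0,\delta^1$, together with the identities $\delta^0\iota=\delta^1\iota=1$.

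\emph{Reflexivity.} Given a morphism $f:A\to B$, I would exhibit an explicit homotopy $f\simeq f$, namely $h:=\iota_B\circ f:A\to P(B)$. Postcomposing with $\delta^0_B$ and using $\delta^0\iota=1$ yields $\delta^0_B h=(\delta^0_B\iota_B)\circ f=1_B\circ f=f$, and symmetrically $\delta^1_B h=f$ from $\delta^1\iota=1$; hence $h:f\simeq f$.

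\emph{Compatibility with composition.} Let $h:A\to P(B)$ be a homotopy from $f$ to $g$, so that $\delta^0_B h=f$ and $\delta^1_B h=g$. For postcomposition with a morphism $k:B\to C$, I would set $k_* h:=P(k)\circ h:A\to P(C)$. The naturality square of $\delta^\eps$, for $\eps=0,1$, applied to $k$ gives $\delta^\eps_C\circ P(k)=k\circ\delta^\eps_B$, whence $\delta^0_C(k_* h)=k\circ\delta^0_B h=k\circ f$ and likewise $\delta^1_C(k_* h)=k\circ g$; therefore $k_* h:kf\simeq kg$. For precomposition with a morphism $\ell:A'\to A$, I would set $\ell^* h:=h\circ\ell:A'\to P(B)$; then immediately $\delta^0_B(\ell^* h)=f\circ\ell$ and $\delta^1_B(\ell^* h)=g\circ\ell$, so $\ell^* h:f\ell\simeq g\ell$. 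Applying these two constructions in succession, and using functoriality of $P$, shows that $f\simeq g$ implies $kf\ell\simeq kg\ell$, which is the claimed compatibility with composition.

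\emph{Main obstacle.} There is essentially none here: the statement is a purely formal consequence of the axioms for a functorial path. The only point requiring care is the bookkeeping — keeping track of which naturality square is invoked at each step (that of $\iota$ in the reflexivity argument, those of $\delta^0$ and $\delta^1$ in the composition argument) and of the indices decorating the components of the natural transformations.
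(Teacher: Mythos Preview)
Your proof is correct and is exactly the standard verification one would expect. The paper itself does not give a proof of this lemma: it simply cites Kamps--Porter, Lemma I.2.3, so there is nothing further to compare.
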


\begin{nada}\label{cilindre_top}
We shall consider extra structure on the path.
The notion dual to the functorial path is that of a functorial cylinder. A basic example of such construction
is the product $X\times I$ of a topological space $X$ with the unit interval $I=[0,1]$.
In this case one has the following operators:
\begin{enumerate}[1.]
 \item Symmetry. The automorphism of $I$ defined by $t\mapsto 1-t$ makes the homotopy relation into a symmetric relation.
 \item Interchange. There is an automorphism of $I^2:=I\times I$ defined by $(t,s)\mapsto (s,t)$.
 \item Product. There is a map $I^2\to I$ given by $(t,s)\mapsto ts$.
 \item Diagonal map. There is a map $I\to I^2$ given by $t\mapsto (t,t)$.
\end{enumerate}
We next axiomatize these transformations in their dual version.
\end{nada}

Given a functorial path $P$, we denote $P^0=1$, $P^1=P$, $P^2=PP$, $\cdots$.
For all $0\leq s\leq n$ we have natural transformations
$$\xymatrix{P^n\ar[r]^{\iota^{n,s}}&P^{n+1} \ar@<1ex>[r]^{(\delta^0)^{n,s}} \ar@<-1ex>[r]_{(\delta^1)^{n,s}}&P^n},\quad
\left\{\begin{array}{l}
 \iota^{n,s}:=P^s(\iota_{P^{n-s}})\\
 (\delta^k)^{n,s}:=P^s(\delta^k_{P^{n-s}})
\end{array}\right..
$$

\begin{defi}[\cite{KP}, Def. I.4.1]
A \textit{symmetry} of $P$ is a natural automorphism $\tau:P\to P$ such that
$\tau_A\tau_A=1_{P(A)}$, $\tau_A\iota_A=\iota_A$ and $\delta^k_A\tau_A=\delta^{1-k}_A$ for $k=0,1$.
\end{defi}

\begin{lem}[\cite{KP}, Prop I.4.5]\label{simetrica}
The homotopy relation defined by a functorial path with a symmetry is a symmetric relation.
\end{lem}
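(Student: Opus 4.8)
The plan is to reverse a given homotopy by composing it with the symmetry. Suppose $h\colon f\simeq g$ is a homotopy between morphisms $f,g\colon A\to B$; that is, $h\colon A\to P(B)$ satisfies $\delta^0_B h=f$ and $\delta^1_B h=g$. I would set $\ov h:=\tau_B\circ h\colon A\to P(B)$ and show that $\ov h$ realizes a homotopy $g\simeq f$, which gives the symmetry of the relation.

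The verification is immediate from the defining properties of a symmetry. Applying the identity $\delta^k_C\tau_C=\delta^{1-k}_C$ at the object $C=B$, one computes
$$\delta^0_B\,\ov h=\delta^0_B\tau_B h=\delta^1_B h=g,\qquad \delta^1_B\,\ov h=\delta^1_B\tau_B h=\delta^0_B h=f.$$
Hence $\ov h\colon g\simeq f$, so the homotopy relation is symmetric. Together with Lemma \ref{reflexiva}, which already provides reflexivity and compatibility with composition, this shows that in the presence of a symmetry the homotopy relation is an equivalence relation compatible with composition.

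There is essentially no obstacle: the content of the statement is exactly packaged into the axiom $\delta^k\tau=\delta^{1-k}$, which is designed precisely so that precomposing a homotopy with $\tau$ interchanges its source and target. It is worth noting that for symmetry of the relation alone one does not even need naturality of $\tau$, nor the involutivity $\tau_A\tau_A=1_{P(A)}$, nor the compatibility $\tau_A\iota_A=\iota_A$; only the componentwise identity $\delta^k_B\tau_B=\delta^{1-k}_B$ is used. The remaining axioms of a symmetry ensure the expected coherence (reversing twice recovers $h$, and the constant homotopy $\iota_B f$ is fixed), and naturality is what is required for $\tau$ to interact well with the functor $P$ in later arguments.
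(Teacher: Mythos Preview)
Your argument is correct and is precisely the standard proof: the paper itself does not supply a proof but simply cites \cite{KP}, Prop.~I.4.5, and your computation using $\delta^k_B\tau_B=\delta^{1-k}_B$ is exactly what that reference does. One small caution: your final sentence overreaches when it says the relation becomes an \emph{equivalence relation}; you have reflexivity and symmetry, but transitivity is not yet established (and in the paper is only obtained later, in Proposition~\ref{relequiv}, under the additional hypothesis that the source is cofibrant).
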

\begin{defi}[\cite{KP}, Def. I.4.6]\label{coproducte}
A \textit{coproduct} of $P$ is a natural transformation $c:P\to P^2$ such that:
\begin{enumerate}[(a)]
 \item The triple $(P,\delta^1,c)$ is a comonad, i.e. for $A\in \Cc$ one has commutative diagrams
$$\xymatrix{
P(A)\ar[r]^{c_A}\ar[d]_{c_A}&P^2(A)\ar[d]^{c_{P(A)}}\\
P^2(A)\ar[r]^{P(c_A)}&P^3(A)&,
}\xymatrix{
P(A)&P^2(A)\ar[l]_{\delta^1_{P(A)}}\ar[r]^{P(\delta^1_{A})}&P(A)\\
&P(A)\ar[ul]^{1_{P(A)}}\ar[ur]_{1_{P(A)}}\ar[u]^{c_A}&.
}
$$
\item For all $A\in\Cc$ the following diagrams commute.
$$
\xymatrix{
A\ar[d]_{\iota_A}\ar[r]^{\iota_A}&P(A)\ar[d]^{c_A}\\
P(A)\ar[r]^{\iota_{P(A)}}&P^2(A)&,
}
\xymatrix{P(A)&P^2(A)\ar[l]_{\delta^0_{P(A)}}\ar[r]^{P(\delta^0_{A})}&P(A)\\
&P(A)\ar[ul]^{\iota_A\delta^0_A}\ar[ur]_{\iota_A\delta^0_A}\ar[u]^{c_A}&.
}
$$
\end{enumerate} 
\end{defi}

\begin{lem}\label{coprhomo}
For all $A\in \Cc$ the map $c_A$ is a homotopy from $\iota_A\delta^0_A$ to $1_{P(A)}$.
\end{lem}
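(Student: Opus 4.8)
By Definition \ref{homotopia_assoc_path}, to say that $c_A:P(A)\to P^2(A)$ is a homotopy from $\iota_A\delta^0_A$ to $1_{P(A)}$ means precisely that
\[
\delta^0_{P(A)}\circ c_A=\iota_A\delta^0_A
\qquad\text{and}\qquad
\delta^1_{P(A)}\circ c_A=1_{P(A)},
\]
where $\delta^0_{P(A)},\delta^1_{P(A)}:P^2(A)\to P(A)$ are the two structure maps of the path applied to the object $P(A)$. So the plan is simply to read off these two identities from the axioms for a coproduct in Definition \ref{coproducte}.

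First I would obtain the second identity from part (a): since $(P,\delta^1,c)$ is a comonad, the counit axiom is exactly the commutativity of the second diagram in (a), whose left-hand triangle reads $\delta^1_{P(A)}\circ c_A=1_{P(A)}$. Then I would obtain the first identity from part (b): the left-hand triangle of the second diagram in (b) states precisely that $\delta^0_{P(A)}\circ c_A=\iota_A\delta^0_A$. Combining the two gives the claim. (The right-hand triangles of these diagrams, giving $P(\delta^1_A)\circ c_A=1_{P(A)}$ and $P(\delta^0_A)\circ c_A=\iota_A\delta^0_A$, are not needed here but record the ``other'' endpoint conventions.)

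There is essentially no obstacle: the statement is a direct unwinding of the definitions, the only point requiring a moment's care being to match the comonad counit $\delta^1$ with the endpoint $\delta^1$ of the homotopy (so that $c_A$ ends at $1_{P(A)}$) and the auxiliary map $\delta^0$ with the endpoint $\iota_A\delta^0_A$, consistently with the orientation fixed in Definition \ref{homotopia_assoc_path}.
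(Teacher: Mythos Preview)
Your proposal is correct and is exactly the unwinding the paper has in mind: the paper's proof consists of the single sentence ``It follows from the definition,'' and you have spelled out precisely which parts of Definition~\ref{coproducte} give $\delta^0_{P(A)}c_A=\iota_A\delta^0_A$ and $\delta^1_{P(A)}c_A=1_{P(A)}$.
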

\begin{proof}
It follows from the definition.
\end{proof}

\begin{defi}[\cite{KP}, Def. I.4.7]\label{intercanvi}
An \textit{interchange} of $P$ is a natural automorphism $\mu:P^2\to P^2$ 
such that $\delta^k_{P(A)}\mu_A=P(\delta^k_A)$ and $P(\delta^k_A)\mu_A=\delta^k_{P(A)}$
for $k=0,1$.
\end{defi}

\begin{defi}\label{foldingmap}
A \textit{folding map} of $P$ is a natural transformation $\nabla:P^2\to P$
such that 
$\delta^k_A\nabla_A=\delta^k_A\delta^k_{P(A)}$, for $k=0,1$, and $\nabla_A\iota_{P(A)}=1_{P(A)}$.
\end{defi}

The transformations defined so far give rise to other useful transformations.
In particular we shall use the dual abstract version of the map $I^3\to I$ given by $(t,s,l)\mapsto t(s+l-sl)$.
\begin{lem}\label{coproduct2}Let $P$ be a functorial path with a symmetry $\tau$ and a coproduct $c$.
There is a natural transformation $\hat{c}:P\to P^3$ satisfying
$$
\left\{\begin{array}{ll}
(i)&\delta^0_{P^2(A)}\hat{c}_A=P(\delta^0_{P(A)})\hat{c}_A=c_A\\
(ii)&\delta^1_{P^2(A)}\hat{c}_A=P(\delta^1_{P(A)})\hat{c}_A=\iota_{P(A)}\\
       \end{array}\right.
\,
\left\{\begin{array}{ll}
(iii)&P^2(\delta^0_{A})\hat{c}_A=\iota_{P(A)}\iota_A\delta^0_A\\
(iv)&P^2(\delta^1_{A})\hat{c}_A=\tau_{P(A)}P(\tau_A)c_A\tau_A\\
       \end{array}\right..
$$
\end{lem}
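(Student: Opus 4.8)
The plan is to \emph{define} $\hat c$ by an explicit composite, guided by the cylinder model, and then check the four pairs of identities $(i)$--$(iv)$ by a formal diagram chase. In the model $P(A)=A^{I}$ the coproduct $c_A$ is the reparametrisation $\gamma\mapsto\big((s,t)\mapsto\gamma(st)\big)$ and the symmetry $\tau_A$ reverses a path; writing an element of $P^{3}(A)$ as a function of three coordinates $(r,s,t)$ with $r$ outermost and $t$ innermost, the transformation we want is $\gamma\mapsto\big((r,s,t)\mapsto\gamma\big(t(1-(1-r)(1-s))\big)\big)$, the dual of $(t,s,l)\mapsto t(s+l-sl)$. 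Reading $1-(1-r)(1-s)$ as a product conjugated by reversals and $t\cdot(-)$ as one further multiplication, this reparametrisation is obtained by first splitting off a multiplicative coordinate with $c_A$, reversing it, splitting the resulting coordinate with $c_{P(A)}$, and finally reversing the two new coordinates. Accordingly I set
\[
\hat c_A:=P(\tau_{P(A)})\circ\tau_{P^{2}(A)}\circ c_{P(A)}\circ\tau_{P(A)}\circ c_A\colon P(A)\to P^{3}(A),
\]
which is natural in $A$ as a composite of natural transformations.

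Then I would verify the four identities, each by pushing the relevant face map through the composite from the left. For $(i)$: naturality of $\delta^{0}$ gives $\delta^{0}_{P^{2}(A)}\circ P(\tau_{P(A)})=\tau_{P(A)}\circ\delta^{0}_{P^{2}(A)}$; the symmetry axiom turns $\delta^{0}_{P^{2}(A)}\circ\tau_{P^{2}(A)}$ into $\delta^{1}_{P^{2}(A)}$; the counit axiom of the comonad $(P,\delta^{1},c)$ at $P(A)$ gives $\delta^{1}_{P^{2}(A)}\circ c_{P(A)}=1$; and what is left is $\tau_{P(A)}\circ\tau_{P(A)}\circ c_A=c_A$ since $\tau_{P(A)}\circ\tau_{P(A)}=1$. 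The equality $P(\delta^{0}_{P(A)})\hat c_A=c_A$ is the same computation, now using functoriality of $P$ and the other counit identity $P(\delta^{1}_{P(A)})\circ c_{P(A)}=1$. Identities $(ii)$ and $(iii)$ run along the same lines, this time invoking the mixed identities $\delta^{0}_{P(B)}c_B=\iota_B\delta^{0}_B=P(\delta^{0}_B)c_B$ and $c_B\iota_B=\iota_{P(B)}\iota_B$ from Definition~\ref{coproducte}(b), together with $\tau_B\iota_B=\iota_B$ and naturality of $\iota$. For $(iv)$ one moves $P^{2}(\delta^{1}_A)$ through the composite using naturality of $\tau$, naturality of $c$ in the form $P^{2}(\delta^{1}_A)\circ c_{P(A)}=c_A\circ P(\delta^{1}_A)$, and the counit identity $P(\delta^{1}_A)c_A=1$; the outcome is $P(\tau_A)\circ\tau_{P(A)}\circ c_A\circ\tau_A$, which equals the prescribed $\tau_{P(A)}\circ P(\tau_A)\circ c_A\circ\tau_A$ because $P(\tau_A)$ and $\tau_{P(A)}$ commute (once more by naturality of $\tau$).

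The real obstacle is not the verification -- each of $(i)$--$(iv)$ collapses in a handful of rewrites using only naturality, the two counit identities for $(P,\delta^{1},c)$, the compatibility axioms of Definition~\ref{coproducte}(b), and the symmetry axioms; in particular the coassociativity axiom (the first diagram of Definition~\ref{coproducte}(a)) is never used, consistent with the hypotheses requiring only a symmetry and a coproduct. The work is in pinning down the composite: one must translate the polynomial $t(1-(1-r)(1-s))$ into the correct interleaving of $c_A$, $c_{P(A)}$ and the \emph{three distinct} symmetries $\tau_{P(A)}$, $\tau_{P^{2}(A)}$, $P(\tau_{P(A)})$ -- which touch respectively a copy of $P$, the outermost copy, and a middle copy -- and keep track throughout of which coordinate each operator acts on.
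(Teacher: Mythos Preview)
Your approach is exactly the paper's: define $\hat c$ as an explicit composite of instances of $c$ and $\tau$ and then push each face map through using naturality, the symmetry axioms, and the counit/compatibility identities of Definition~\ref{coproducte}. The paper packages the same composite as $\hat c_A:=c'_{P(A)}\circ c_A$ with $c'_A:=P(\tau_A)c_A\tau_A$; your formula agrees with this once one inserts the factor $\tau_{P^2(A)}$, and that factor is in fact needed---without it one gets, for instance, $\delta^0_{P^2(A)}\hat c_A=\iota_{P(A)}$ rather than $c_A$---so your composite is the correct one.
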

\begin{proof}
Let $c':P\to P^2$ be the natural transformation defined by $c'_A:=P(\tau_A)c_A\tau_A$.
Then $c'$ satisfies the same properties of the coproduct $c$, with the maps $\delta^0$ and $\delta^k$ interchanged.
Define $\hat{c}_A:=c'_{P(A)}\circ c_A$. The above identities follow from the naturality of $\tau$ and $c$.
\end{proof}

Denote by $\sim$ the congruence of $\Cc$ transitively generated by the homotopy relation:
$f\sim g$ if there is a chain of homotopies 
$f\simeq \cdots\simeq g.$

\begin{defi}
A morphism $f:A\to B$ of $\Cc$ is a \textit{homotopy equivalence} if 
there exists a morphism $g:B\to A$ satisfying $fg\sim 1_B$ and $gf\sim 1_A$.
\end{defi}
Denote by $\Ss$ the class of homotopy equivalences of $\Cc$. 
This class is closed by composition and contains all isomorphisms.

\begin{prop}\label{loccongruencia}
Let $\Cc$ be a category with a functorial path, together with a symmetry and a coproduct. 
Then the categories $\pi\Cc:=\Cc/\sim$ and $\Cc[\Ss^{-1}]$
are canonically isomorphic.
\end{prop}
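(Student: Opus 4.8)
The plan is to show that the congruence $\sim$ on $\Cc$ coincides with the congruence generated by inverting the class $\Ss$ of homotopy equivalences, so that the two quotient categories have the same universal property. Concretely, I would construct functors in both directions and check they are mutually inverse. The canonical functor $\gamma:\Cc\to\pi\Cc$ sends every homotopy equivalence to an isomorphism: if $f:A\to B$ has a homotopy inverse $g$, then $gf\sim 1_A$ and $fg\sim 1_B$ force $\gamma(f)$ to be invertible with inverse $\gamma(g)$ in $\pi\Cc=\Cc/\!\sim$. By the universal property of localization, $\gamma$ factors uniquely through a functor $\Phi:\Cc[\Ss^{-1}]\to\pi\Cc$ which is the identity on objects.

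For the functor in the other direction, I would start from the localization functor $\ell:\Cc\to\Cc[\Ss^{-1}]$ and show it identifies homotopic morphisms, i.e. $f\simeq g$ implies $\ell(f)=\ell(g)$; by transitivity this then gives a factorization $\Psi:\pi\Cc\to\Cc[\Ss^{-1}]$, again the identity on objects. This is the step where the symmetry and coproduct are needed. Given a homotopy $h:A\to P(B)$ with $\delta^0_Bh=f$ and $\delta^1_Bh=g$, the key observation is that $\delta^0_B$ and $\delta^1_B$ are homotopy equivalences: each has $\iota_B$ as a section (since $\delta^k_B\iota_B=1_B$), and $\iota_B\delta^0_B\sim 1_{P(B)}$ by Lemma~\ref{coprhomo} using the coproduct $c$, while $\iota_B\delta^1_B\sim 1_{P(B)}$ follows by transporting this identity across the symmetry $\tau$ (so that $\iota_B=\tau_B\iota_B$ and $\delta^1_B=\delta^0_B\tau_B$). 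Hence in $\Cc[\Ss^{-1}]$ both $\ell(\delta^0_B)$ and $\ell(\delta^1_B)$ become invertible and, since $\iota_B$ is a common section, $\ell(\delta^0_B)=\ell(\iota_B)^{-1}=\ell(\delta^1_B)$. Therefore $\ell(f)=\ell(\delta^0_B)\ell(h)=\ell(\delta^1_B)\ell(h)=\ell(g)$.

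Finally I would check that $\Phi$ and $\Psi$ are mutually inverse. Both compositions $\Psi\Phi$ and $\Phi\Psi$ are the identity on objects, and on morphisms it suffices to verify that they fix the images of the generating morphisms (arrows of $\Cc$ for $\pi\Cc$, and arrows together with formal inverses of maps in $\Ss$ for $\Cc[\Ss^{-1}]$); in each case the composite sends the class of $f$ in $\Cc$ to the class of $f$, by construction of $\Phi$ and $\Psi$ via the respective universal properties. For $\Cc[\Ss^{-1}]$ one also needs that $\Psi\Phi$ respects formal inverses, which is automatic since it is a functor and $\Phi$ already sends $s\in\Ss$ to an isomorphism. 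The main obstacle is the middle paragraph: making precise that $\delta^0_B$ and $\delta^1_B$ become equal after localization, which is exactly where the extra structure (symmetry to relate $\delta^0$ and $\delta^1$, coproduct to witness $\iota\delta^0\simeq 1$) is genuinely used; everything else is a formal diagram-chase with universal properties.
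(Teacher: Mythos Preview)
Your argument is correct. The paper's own proof is a single sentence: it notes that Lemma~\ref{coprhomo} makes $\iota_A$ a homotopy equivalence for every $A$, and then invokes Proposition~1.3.3 of \cite{GNPR}. That cited proposition is precisely the abstract statement you have reproved here by hand: once the path object's structural map $\iota_A$ lies in $\Ss$, the two localizations coincide. So your route and the paper's are the same in substance; the difference is only that you unpack the black box while the paper defers to the reference. One small point worth tightening in your write-up: the claim that $\iota_B\delta^1_B\sim 1_{P(B)}$ ``follows by transporting across the symmetry'' deserves the explicit homotopy $P(\tau_B)\,c_B\,\tau_B$, since from $\iota_B=\tau_B\iota_B$ and $\delta^1_B=\delta^0_B\tau_B$ one gets $\iota_B\delta^1_B=\tau_B(\iota_B\delta^0_B)\tau_B$, and conjugating the homotopy $c_B$ by $\tau_B$ in this way (using naturality of $\delta^k$ and $\tau_B^2=1$) gives exactly what you need. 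What your self-contained version buys is independence from the external reference; what the paper's citation buys is brevity.
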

\begin{proof}
By Lemma $\ref{coprhomo}$ the map $\iota_A:A\to P(A)$ is a homotopy equivalence for all $A\in\Cc$. The result follows from Proposition 1.3.3 of \cite{GNPR}.
\end{proof}

We will use the following constructions.
\begin{defi}\label{def_map_path}
\begin{enumerate}[(1)]
 \item  Assume that the pull-back diagram
$$
\xymatrix{\pb\ar[d]_{\pi_1}\Pp(f)\ar[r]^{\pi_2}&P(B)\ar[d]^{\delta^0_B}\\
A\ar[r]^f&B&.
}
$$
exists. Then $\Pp(f)$ is called the \textit{mapping path of $f$}.
\end{enumerate}
\begin{enumerate}[(2)]
 \item Assume that the pull-back diagram
$$
\xymatrix{\pb\ar[d]_{\pi_1}\Pp(f,f')\ar[r]^{\pi_2}&P(B)\ar[d]^{(\delta^0_B,\delta^1_B)}\\
A\times A'\ar[r]^{f\times f'}&B\times B&.
}
$$
exists. Then $\Pp(f,f')$ is called the \textit{double mapping path of $f$ and $f'$}.
\end{enumerate}
\end{defi}
With these notations we have $\Pp(f,1_B)=\Pp(f)$.

\subsection{Axioms for a P-category}
Let $\Cc$ be a category with finite products and a final object $e$. Assume that $\Cc$ has
a functorial path $P$, together with a symmetry $\tau$, 
an interchange $\mu$, a coproduct $c$ and a folding map $\nabla$.
Assume as well that $\Cc$ has
two distinguished 
classes of maps $\Ff$ and $\Ww$ called \textit{fibrations} and \textit{weak equivalences} respectively.
A map will be called a \textit{trivial fibration} if it is both a fibration and a weak equivalence.
As is customary, the symbol $\stackrel{\sim}{\lra}$ will be used for weak equivalences, while
$\twoheadrightarrow$ will denote a fibration.

\begin{defi}\label{BCE_cat}
The quadruple $(\Cc,P,\Ff,\Ww)$ is called a \textit{P-category} if the following axioms are satisfied:
\begin{enumerate}
 \item [(P$_1$)]  The classes $\Ff$ and $\Ww$ contain all isomorphisms and are closed by composition. The class $\Ww$ satisfies
the two out of three property.
The map $A\to e$ is a fibration for every $A\in\Cc$.
\item [(P$_2$)] The map $\iota_A:A\to P(A)$ is a weak equivalence and $(\delta^0_A,\delta^1_A):P(A)\to A\times A$ is a fibration.
The maps $\delta^0_A$ and $\delta^1_A$ are trivial fibrations, for every $A\in\Cc$.
 \item [(P$_3$)] 
Given a diagram $A\stackrel{u}{\to}C\stackrel{v}{\twoheadleftarrow}B$,
where $v$ is a fibration, the fibre product $A\times_CB$ exists, and the projection $\pi_1:A\times_CB\twoheadrightarrow A$ is a fibration.
In addition, if $v$ is a trivial fibration, then $\pi_1$ is so, and if $u$ is a weak equivalence,
then $\pi_2:A\times_CB\to B$ is also a weak equivalence.
\item [(P$_4$)]  The path preserves fibrations and weak equivalences and is compatible with the fibre product: $P(\Ff)\subset\Ff$, $P(\Ww)\subset\Ww$ and 
$P(A\times_CB)= PA\times_{PC}PB$.
 \item [(P$_5$)] For every fibration $v:A\twoheadrightarrow B$, the map $\ov{v}$ defined by the following diagram
is a fibration.
$$
\xymatrix{
P(A) \ar@/_/[ddr]_{(\delta^0_A,\delta^1_A)} \ar@/^/[drr]^{P(v)} \ar@{.>}[dr]|-{\ov{v}}\\
& \pb\Pp(v,v)\ar[d]\ar[r] & P(B) \ar[d]^{(\delta^0_{B},\delta^1_B)}\\
& A\times A \ar[r]_{v\times v} & B\times B
}
$$
\end{enumerate}
\end{defi}

\begin{rmk}
A category satisfying axioms (P$_1$) to (P$_3$) is a Brown category of fibrant objects
with a functorial path \cite{Br}.
Axiom (P$_5$) is dual to the \textit{relative cylinder axiom} of Baues \cite{Ba},
and can be described as a cubical 
homotopy lifting property in dimension 2 (see \cite{KP}, pag. 86).
Baues introduced P-categories as an abstract example of a fibration category. 
Although our notion of a P-category differs substantially from the notion introduced by Baues,
we borrow the same name. 
The axioms for Baues fibration categories are very similar to those of Anderson-Brown-Cisinski fibration categories,
the latter including conditions relative to limits, such as closure of fibrations under transfinite compositions. 
The motivation behind these additional axioms lies in the construction of homotopy colimits indexed by small diagrams.
We refer to \cite{Ba}, \cite{Ci} and \cite{RB} for details.
\end{rmk}
Our objective in this section is to study the homotopy category
$\Ho(\Cc):=\Cc[\Ww^{-1}].$
We first prove some useful results
that are a consequence of the axioms.

\begin{lem}[cf. \cite{Br}, Factorization Lemma]\label{factoritza_brown}
Let $f:A\to B$ be a morphism in a P-category category $\Cc$.
Define maps
$p:=\pi_1$, $q:=\delta^1_B\pi_2$, and $\iota:=(1_A,\iota_Bf)$.
Then the diagram
$$\xymatrix{
A&\Pp(f)\ar[l]_{p}\ar[r]^{q}&B\\
&A\ar@{=}[lu]\ar[ur]_f\ar[u]^{\iota}&
}$$
commutes. In addition:
\begin{enumerate}[(1)] 
 \item The map $p$ is a trivial fibration, and $q$ is a fibration.
 \item The map $\iota$ is a homotopy equivalence with homotopy inverse $p$.
 \item If $f$ is a weak equivalence, then $q$ is a trivial fibration.
\end{enumerate}
\end{lem}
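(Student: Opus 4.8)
This is the standard Brown factorization lemma, adapted to the functorial-path setting, so I would follow Brown's original argument closely, using the axioms of a P-category at each step. The commutativity of the triangle is immediate from the definitions: $p\iota = \pi_1(1_A,\iota_B f) = 1_A$ and $q\iota = \delta^1_B\pi_2(1_A,\iota_B f) = \delta^1_B\iota_B f = f$, using $\delta^1\iota = 1$ from Definition \ref{funct_path}. So the content is in the three numbered claims.

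\textbf{Step (1): $p$ is a trivial fibration and $q$ is a fibration.} Recall $\Pp(f)$ is the pullback of $\delta^0_B: P(B)\to B$ along $f:A\to B$. By (P$_2$), $\delta^0_B$ is a trivial fibration. By (P$_3$), the projection $\pi_1 = p: \Pp(f)\to A$ is therefore also a trivial fibration. For $q = \delta^1_B\pi_2$: I would note that $\pi_2:\Pp(f)\to P(B)$ is the pullback of $f$ along the fibration $\delta^0_B$, hence a fibration if $f$ is (but $f$ need not be a fibration in general). A cleaner route is to identify $\Pp(f)$ with the double mapping path $\Pp(f,1_B)$ as remarked after Definition \ref{def_map_path}, and to factor the map $q$ through the structure available: write $q$ as the composite $\Pp(f)\to A\times B \to B$? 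That does not obviously help. Instead I would argue directly: the map $(p,q):\Pp(f)\to A\times B$ is, by construction of the pullback and the fact that $(\delta^0_B,\delta^1_B):P(B)\to B\times B$ is a fibration (axiom (P$_2$)), itself a fibration — it is the base change of $(\delta^0_B,\delta^1_B)$ along $f\times 1_B: A\times B\to B\times B$, which exists and is a fibration by (P$_3$) since $(\delta^0_B,\delta^1_B)$ is a fibration. Then $q$ is the composite of the fibration $(p,q)$ with the projection $A\times B\twoheadrightarrow B$ (a fibration by (P$_1$), since $A\to e$ is a fibration and projections are base changes of such). Hence $q$ is a fibration.

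\textbf{Step (2): $\iota$ is a homotopy equivalence with inverse $p$.} We already have $p\iota = 1_A$, so one relation holds on the nose. For the other, I must produce a homotopy $\iota p \simeq 1_{\Pp(f)}$, i.e. a morphism $H:\Pp(f)\to P(\Pp(f))$ with $\delta^0 H = \iota p$ and $\delta^1 H = 1$. This is where the coproduct/symmetry structure on $P$ enters. Using (P$_4$), $P$ commutes with the fibre product, so $P(\Pp(f)) = \Pp(f)\times_{\cdot}\cdot$ is again a pullback of path objects, and I can build a map into it by the universal property: I need compatible maps $\Pp(f)\to P(A)$ and $\Pp(f)\to P(P(B)) = P^2(B)$. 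For the first component take $\iota_A p$ (the constant path at $p$), which has $\delta^0 = \delta^1 = p$ — but I want $\delta^0 = p$, $\delta^1 = p$, consistent with both endpoints of $H$ having first component $p$ (indeed $\iota p$ and $1$ both project to $p$ under... no: $\pi_1\iota p = p\cdot$? $\pi_1\circ\iota p = p$ since $\pi_1\iota = 1$; and $\pi_1\circ 1 = \pi_1 = p$ — good, so the first component is constant). For the second component I need a path in $P^2(B)$ from $\delta^0_B\pi_2$... let me recompute: $\pi_2\iota p$ should be the constant path $\iota_B f p = \iota_B\delta^0_B\pi_2$ and $\pi_2\cdot 1 = \pi_2$; so I need $K:\Pp(f)\to P^2(B)$ with $\delta^0 K = \iota_B\delta^0_B\pi_2$ and $\delta^1 K = \pi_2$. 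Precomposing the coproduct $c_B:P(B)\to P^2(B)$ with $\pi_2$ does exactly this, by Lemma \ref{coprhomo}: $c_B$ is a homotopy from $\iota_B\delta^0_B$ to $1_{P(B)}$. I would then check these two components agree over $P(B)$ (i.e. under $P(\delta^0_B)$ and the appropriate projections, using $\delta^0_B\iota_B = 1$ and the comonad/counit identities of Definition \ref{coproducte}) so that they glue to the desired $H = (\iota_A p\pi_1,\, c_B\pi_2)$ into $P(\Pp(f))$; the endpoint conditions $\delta^0 H = \iota p$, $\delta^1 H = 1$ then follow componentwise.

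\textbf{Step (3): if $f$ is a weak equivalence, $q$ is a trivial fibration.} We already know $q$ is a fibration. Since $p$ is a weak equivalence (it is trivial by Step 1) and $q\iota = f$ with $\iota$ a homotopy equivalence hence a weak equivalence (homotopy equivalences are weak equivalences — this needs a one-line justification: $\iota$ has homotopy inverse $p$, and by the two-out-of-three property for $\Ww$ applied using Lemma \ref{reflexiva}/Lemma \ref{simetrica} to the relation $p\iota = 1$, $\iota p\sim 1$, together with $p\in\Ww$, one gets $\iota\in\Ww$; alternatively invoke that weak equivalences are closed under the homotopy relation, which is part of the standard package here). Then from $q\iota = f$ with $f, \iota\in\Ww$, the two-out-of-three property (P$_1$) gives $q\in\Ww$, so $q$ is a trivial fibration.

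\textbf{Main obstacle.} The routine parts are the triangle identities and Step 1. The real work is Step 2: constructing the homotopy $\iota p\simeq 1_{\Pp(f)}$ requires assembling a map into $P(\Pp(f))$ via the universal property of the fibre product (legitimate by (P$_4$)), and verifying the compatibility of the two components over the base $P(B)$ using the counit and coassociativity identities for the coproduct $c$ in Definition \ref{coproducte}. This bookkeeping with the comonad axioms is the one genuinely delicate computation, and I would present it carefully rather than leaving it to the reader.
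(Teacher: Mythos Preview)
Your proposal is correct and follows essentially the same approach as the paper: for (1) the paper simply defers to the dual of Brown's argument (your explicit identification of $(p,q)$ as a base change of the fibration $(\delta^0_B,\delta^1_B)$ along $f\times 1_B$ is precisely that argument), and for (2) the paper constructs exactly the homotopy $h=(\iota_A\pi_1,\,c_B\pi_2)$ into $P(\Pp(f))$ via the pullback and the coproduct, as you outline. One small slip: in your formula $H=(\iota_A p\pi_1,\,c_B\pi_2)$ there is a stray $\pi_1$, since $p=\pi_1$ already; the correct first component is $\iota_A\pi_1=\iota_A p$.
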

\begin{proof}
Since $\delta^0_B$ is a fibration, the mapping path $\Pp(f)$ exists by (P$_3$). From the definitions it 
is immediate that the above diagram commutes.

Assertion (1) follows analogously to the proof of Brown's Factorization Lemma \cite{Br} in its dual version.
Assertion (3) follows from (1) and the two out of three property of $\Ww$.
We prove (2). Since $p\iota=1$, it suffices to define a homotopy from $\iota p$ to
the identity.
Let $h$ be the morphism defined by the following pull-back diagram:
$$
\xymatrix{
\Pp(f) \ar@/_/[ddr]_{\iota_A\pi_1} \ar@/^/[drr]^{c_B\pi_2} \ar@{.>}[dr]|-{h}\\
& \pb P(\Pp(f)) \ar[d] \ar[r] & P^2(B) \ar[d]^{P(\delta^0_{B})}\\
& P(A) \ar[r]_{P(f)} & P(B)&,
}
$$
where $c$ is the coproduct of the path (see Definition $\ref{coproducte}$) and satisfies $P(\delta^0_{B})c_B=\iota_B\delta^0_B$.
Using the naturality of $\iota$ one sees that the above solid diagram commutes. Hence $h$ is well defined.
By (P$_4$), the pull-back $P(\Pp(f))$ is a path object of $\Pp(f)$ and for $k=0,1$ we have
$$\delta^k_{\Pp(f)}h=(\delta^k_AP(\pi_1),\delta^k_{P(B)}P(\pi_2))(\iota_A\pi_1,c_B\pi_2)
=(\pi_1,\delta^k_{P(B)}c_B\pi_2).$$
Since $\delta^0_{P(B)}c_B=\iota_B\delta^0_B$ and $\delta^1_{P(B)}c_B=1_B$, it follows that
$
\delta^0_{\Pp(f)}h=(\pi_1,\iota_B\delta^0_B\pi_2)=(1_A,\iota_Bf)\pi_1=\iota p$ and 
$\delta^1_{\Pp(f)}h=(\pi_1,\pi_2)=1.
$
Hence $h$ is a homotopy from $\iota p$ to the identity.
\end{proof}

Axiom (P$_5$) states that for a fibration $v:A\twoheadrightarrow B$, the induced morphism
$\ov v:P(A)\to \Pp(v,v)$ is a fibration. We prove an analogous statement for weak equivalences.
\begin{lem}\label{induit_we}
Let $w:A\stackrel{\sim}{\to}B$ be weak equivalence in a P-category $\Cc$. Then the induced map
$$\ov{w}:=((\delta^0_A,\delta^1_A),P(w)):P(A)\to \Pp(w,w)$$
is a weak equivalence.
\end{lem}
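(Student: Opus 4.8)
The plan is to factor the map $\ov w$ through an intermediate object that separates the ``weak equivalence in the $A$-direction'' from the ``weak equivalence coming from $P(w)$'', and to apply the two out of three property together with the fibre product axiom (P$_3$) and the path axioms (P$_2$), (P$_4$). Recall $\Pp(w,w)$ is the pull-back of $(\delta^0_B,\delta^1_B):P(B)\to B\times B$ along $w\times w:A\times A\to B\times B$, so it carries projections to $A\times A$ and to $P(B)$, and $\ov w=((\delta^0_A,\delta^1_A),P(w))$.

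First I would observe that by (P$_4$) the path preserves weak equivalences, so $P(w):P(A)\stackrel{\sim}{\to}P(B)$ is a weak equivalence. Next I would identify a factorization of the pull-back: consider the object $\Pp(1_A,1_A)=P(A)$ obtained by taking the pull-back of $(\delta^0_B,\delta^1_B)$ is replaced by the pull-back of $(\delta^0_A,\delta^1_A):P(A)\to A\times A$ along the identity — trivially $P(A)$ itself. The relevant comparison is that $\Pp(w,w)$ can be computed as the pull-back of $P(w):P(A')\to P(B)$... but cleaner: factor $w\times w = (w\times 1)\circ(1\times w)$, or better, use that $\Pp(w,w)$ fits in a pull-back square over $P(B)$ along $w\times w$. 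The cleanest route is to compare $\ov w$ with the map $P(A)\to \Pp(w,1_B)$ and then $\Pp(w,1_B)\to \Pp(w,w)$, but since $\Pp(w,1_B)=\Pp(w)$ is the mapping path, I would instead argue directly: form the pull-back of $(\delta^0_A,\delta^1_A):P(A)\to A\times A$ against $w\times w$ composed appropriately. Concretely, $\Pp(w,w)$ is also the pull-back of the fibration $(\delta^0_B,\delta^1_B)$ along $w\times w$; by (P$_3$), since $w\times w$ is a weak equivalence (product of weak equivalences is a weak equivalence, using (P$_1$) two out of three and the product projections, or directly since each $w$ is), the projection $\pi:\Pp(w,w)\to P(B)$ is a weak equivalence. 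Then $P(w)=\pi\circ\ov w$, and since both $P(w)$ and $\pi$ are weak equivalences, the two out of three property of $\Ww$ forces $\ov w$ to be a weak equivalence.

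The one point requiring care is that $w\times w:A\times A\to B\times B$ is a weak equivalence. This I would deduce by writing it as the composite $A\times A\xrightarrow{w\times 1}B\times A\xrightarrow{1\times w}B\times B$ and checking each factor is a weak equivalence; for $w\times 1:A\times A\to B\times A$ one uses that $A\times A = A\times_e A$ is a fibre product over the final object (with the fibration $A\to e$ from (P$_1$)), and the last clause of (P$_3$) — if $u$ is a weak equivalence then $\pi_2:A\times_C B\to B$ is a weak equivalence — applied suitably, or more simply one notes $\pi_1:B\times A\to B$ and checks the induced map is a weak equivalence by two out of three using the sections. So the main obstacle, such as it is, is bookkeeping: verifying that products and pull-backs along the relevant maps behave well under $\Ww$, all of which follows formally from (P$_1$)--(P$_4$). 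Once $\pi:\Pp(w,w)\stackrel{\sim}{\to}P(B)$ is established, the conclusion is immediate from $P(w)=\pi\ov w$ and two out of three.
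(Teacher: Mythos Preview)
Your argument is correct and is essentially the paper's own proof: show $w\times w$ is a weak equivalence by factoring it as a composite of maps handled by (P$_3$), deduce from (P$_3$) that the projection $\pi_2:\Pp(w,w)\to P(B)$ is a weak equivalence, observe $P(w)=\pi_2\circ\ov w$ with $P(w)\in\Ww$ by (P$_4$), and conclude by two out of three. The presentation meanders through a couple of false starts before settling on this, but the final route matches the paper exactly.
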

\begin{proof}
By (P$_3$) the maps $1_A\times w$ and $w\times 1_B$ are weak equivalences. Therefore the composition
$w\times w$ is also a weak equivalence.
Consider the commutative diagram
$$
\xymatrix{
&\ar[dl]_{\ov{w}}P(A)\ar[d]^{P(w)}\\
\pb \Pp(w,w)\ar[r]^{\pi_2}\ar[d]_{\pi_1}&P(B)\ar[d]^{(\delta^0_B,\delta^1_B)}\\
A\times A\ar[r]^{w\times w}&B\times B&.
}
$$
By (P$_3$), the projection $\pi_2$
is a weak equivalence. By (P$_4$), the map $P(w)$ is a weak equivalence.
Hence $\ov{w}$ is a weak equivalence by the two out of three property of $\Ww$.
\end{proof}

\begin{lem}\label{profib}
The map $\pi_A$ defined by the following pull-back diagram
is a trivial fibration.
$$
\xymatrix{
P^2(A) \ar@/_/[ddr]_{\delta^0_{P(A)}} \ar@/^/[drr]^{P(\delta^1_A)} \ar@{.>}[dr]|-{\pi_A}\\
& \pb \Pp(\delta^1_A)\ar[d]\ar[r] & P(A) \ar[d]^{\delta^0_{A}}\\
& P(A) \ar[r]_{\delta^1_A} & A
}
$$
\end{lem}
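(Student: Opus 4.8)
The plan is to identify $\pi_A : P^2(A) \to \Pp(\delta^1_A)$ with a map obtained from applying Lemma \ref{factoritza_brown} to the trivial fibration $\delta^1_A : P(A) \to A$, and then invoke that lemma together with the two-out-of-three property. Recall that for a morphism $f : X \to Y$, the mapping path $\Pp(f)$ is the pull-back of $\delta^0_Y : P(Y) \to Y$ along $f$, and Lemma \ref{factoritza_brown} produces a factorization $X \xra{\iota} \Pp(f) \xra{q} Y$ of $f$ with $\iota$ a homotopy equivalence whose inverse is the trivial fibration $p = \pi_1 : \Pp(f) \to X$. Taking $f = \delta^1_A : P(A) \to A$, the mapping path $\Pp(\delta^1_A)$ is exactly the pull-back appearing in the statement (the square with $\delta^1_A$ along the bottom and $\delta^0_A : P(A) \to A$ on the right), so the target of $\pi_A$ really is this $\Pp(\delta^1_A)$.

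First I would check that $\pi_A$ agrees with the canonical map $\iota : P(A) \to \Pp(\delta^1_A)$ precomposed, or rather realize $\pi_A$ directly as a map into the pull-back: by the universal property, a map $P^2(A) \to \Pp(\delta^1_A)$ is the same as a pair of maps $P^2(A) \to P(A)$ (into the lower-left corner) and $P^2(A) \to P(A)$ (into the upper-right corner) that become equal after composing with $\delta^1_A$ and $\delta^0_A$ respectively. The diagram defining $\pi_A$ specifies these two maps to be $\delta^0_{P(A)}$ and $P(\delta^1_A)$, and they are compatible precisely because $\delta^1_A \delta^0_{P(A)} = \delta^0_A P(\delta^1_A)$, which holds by naturality of $\delta^0$ applied to the morphism $\delta^1_A : P(A) \to A$. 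So $\pi_A$ is well defined. Now observe that $\delta^0_{P(A)} : P^2(A) \to P(A)$ is itself a trivial fibration by (P$_2$) (applied to the object $P(A)$), and that under the identification of $\Pp(\delta^1_A)$ with the mapping path, the map $p = \pi_1 : \Pp(\delta^1_A) \to P(A)$ is a trivial fibration by Lemma \ref{factoritza_brown}(1). Since $p \circ \pi_A = \delta^0_{P(A)}$ by construction (the lower-left leg of the cone), two-out-of-three for $\Ww$ gives that $\pi_A$ is a weak equivalence.

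It remains to see that $\pi_A$ is a fibration. The cleanest route is to recognize that $\pi_A$ is, up to the above identification, the canonical map $\iota = (1_{P(A)}, \iota_A \delta^1_A)$ of Lemma \ref{factoritza_brown} followed by $P$ applied to nothing — wait, more carefully: I would instead exhibit $\pi_A$ as a pull-back of a known fibration, or factor it and use the axioms. Concretely, note that $P^2(A) = P(P(A))$ carries the path structure of $P(A)$, and the map $\overline{\delta^1_A} : P(P(A)) \to \Pp(\delta^1_A, \delta^1_A)$ of axiom (P$_5$) is a fibration since $\delta^1_A$ is a fibration; I would relate $\pi_A$ to this map by composing with a projection $\Pp(\delta^1_A,\delta^1_A) \to \Pp(\delta^1_A)$ coming from the identity $\Pp(f,1) = \Pp(f)$ and checking this projection is a fibration via (P$_3$). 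The main obstacle is precisely this last point: unwinding the definitions of the various pull-backs (the mapping path, the double mapping path, and the (P$_5$) square for the object $P(A)$) so that $\pi_A$ is manifestly built from fibrations by the operations allowed in (P$_3$), (P$_4$), and (P$_5$). Once the bookkeeping of these pull-back squares is arranged compatibly — using $P(A \times_C B) = PA \times_{PC} PB$ from (P$_4$) to commute $P$ past the pull-back defining $\Pp(\delta^1_A)$ — the conclusion that $\pi_A$ is a fibration, and hence a trivial fibration, follows formally.
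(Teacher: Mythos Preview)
Your plan is essentially the paper's proof. The paper factors $\pi_A$ as
\[
P^2(A)\xrightarrow{\ \overline{\delta^1_A}\ }\Pp(\delta^1_A,\delta^1_A)\xrightarrow{\ p_A\ }\Pp(\delta^1_A),
\]
shows $\overline{\delta^1_A}$ is a trivial fibration by (P$_5$) together with Lemma~\ref{induit_we}, and shows the projection $p_A$ is a trivial fibration as a base change of $\delta^1_A:P(A)\to A$ along $\delta^1_A\pi_2:\Pp(\delta^1_A)\to A$ via (P$_3$); the composite is then a trivial fibration.

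Two small remarks. First, your separate argument for the weak-equivalence part (two-out-of-three with $p\circ\pi_A=\delta^0_{P(A)}$) is perfectly valid and slightly more direct than the paper's, which instead gets it from both factors of the composition being trivial fibrations. Second, your description of the projection $\Pp(\delta^1_A,\delta^1_A)\to\Pp(\delta^1_A)$ as ``coming from the identity $\Pp(f,1)=\Pp(f)$'' is not quite right: that identity concerns $\Pp(\delta^1_A,1_A)$, not $\Pp(\delta^1_A,\delta^1_A)$. The correct description, which you gesture at, is that this projection forgets the second $P(A)$-factor, and is therefore a pull-back of $\delta^1_A$ along $\delta^1_A\pi_2$; this is exactly how the paper's $p_A$ is defined and why (P$_3$) applies. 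Once you make that precise, your ``bookkeeping obstacle'' dissolves and the argument is complete.
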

\begin{proof}
Define a map $p_A$ via the commutative diagram:
$$
\xymatrix{
\Pp(\delta^1_A,\delta^1_A)\ar[dd]^{\pi_1} \ar@/^/[drr]^{\pi_2} \ar@{.>}[dr]|-{p_A}\\
& \pb \Pp(\delta^1_A)\ar[d]\ar[r] & P(A) \ar[d]^{\delta^0_{A}}\\
P(A)\times P(A)\ar[r]^-{\pi_1}& P(A) \ar[r]_{\delta^1_A} & A
}.
$$
This map is a base extension of
the trivial fibration $\delta^1_A:P(A)\to A$, by $\delta^1_A\pi_2:\Pp(\delta^1_A)\to A$.
Therefore by (P$_3$) it is a trivial fibration.
The map $\pi_A:P^2(A)\to \Pp(\delta^1_A)$ is the composition
$$P^2(A)\xra{\ov{\delta^1_A}}\Pp(\delta^1_A,\delta^1_A)\stackrel{p_A}{\lra}\Pp(\delta^1_A),$$
where $\ov{\delta^1_A}$ is a trivial fibration by (P$_5$) and Lemma $\ref{induit_we}$.
Therefore $\pi_A$ is a trivial fibration.
\end{proof}

\subsection{Cofibrant models}

\begin{defi}\label{defcof}
An object $C$ of a P-category $\Cc$ is called \textit{cofibrant} if for any solid diagram
$$\xymatrix{
&A\ar@{->>}[d]^{w}_{\wr}\\
C\ar@{.>}[ur]^g\ar[r]^f&B
}$$
in which $w$ is a trivial fibration, there exists a dotted arrow $g$ making the diagram commute.
\end{defi}

The following is a homotopy lifting property for cofibrant objects with respect to trivial fibrations.
\begin{lem}\label{holift}
Let $C$ be a cofibrant object of a P-category $\Cc$, and let $v:A\stackrel{\sim}{\twoheadrightarrow} B$ be a trivial fibration.
For every commutative solid diagram of $\Cc$
$$
\xymatrix{
C \ar@/_/[ddr]_{h} \ar@/^/[drrr]^{(f_0,f_1)} \ar@{.>}[dr]|-{\wt h}\\
& P(A) \ar[d]^{P(v)} \ar[rr]_{(\delta^0_A,\delta^1_A)} && A\times A \ar[d]^{v\times v}\\
& P(B) \ar[rr]_{(\delta^0_B,\delta^1_B)} && B\times B&,
}
$$
there exists a dotted arrow $\wt h$,
making the diagram commute.
In other words: every homotopy $h:vf_0\simeq vf_1$ lifts to a homotopy $\wt h:f_0\simeq f_1$
such that $P(v)\wt h=h$.

\end{lem}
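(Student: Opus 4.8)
The plan is to reduce the homotopy lifting statement to the defining lifting property of cofibrant objects by constructing an auxiliary trivial fibration into which $C$ must map, and then extracting the desired lifted homotopy from the lift. The morphism $\ov v:P(A)\to \Pp(v,v)$ is a fibration by axiom (P$_5$), and I claim it is in fact a \emph{trivial} fibration: by Lemma $\ref{induit_we}$ applied to the weak equivalence $v$, the map $\ov v$ is a weak equivalence. Hence $\ov v:P(A)\to\Pp(v,v)$ is a trivial fibration. The data of the solid diagram, namely the homotopy $h:C\to P(B)$ together with $(f_0,f_1):C\to A\times A$ satisfying $(v\times v)(f_0,f_1)=(\delta^0_B,\delta^1_B)h$ and $P(v)$ matching appropriately, is precisely a morphism $C\to\Pp(v,v)$ into the double mapping path, using the pull-back defining $\Pp(v,v)$.

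Concretely, first I would check that $((f_0,f_1),h):C\to A\times A\times_{B\times B}P(B)=\Pp(v,v)$ is well defined: the compatibility condition $(v\times v)(f_0,f_1)=(\delta^0_B,\delta^1_B)h$ is exactly the commutativity of the outer square in the hypothesis, so this pair factors through the pull-back $\Pp(v,v)$. Then, since $C$ is cofibrant and $\ov v:P(A)\twoheadrightarrow\Pp(v,v)$ is a trivial fibration, Definition $\ref{defcof}$ provides a lift $\wt h:C\to P(A)$ with $\ov v\circ\wt h=((f_0,f_1),h)$. Unwinding the components of $\ov v=((\delta^0_A,\delta^1_A),P(v))$ gives $(\delta^0_A,\delta^1_A)\wt h=(f_0,f_1)$, i.e. $\delta^0_A\wt h=f_0$ and $\delta^1_A\wt h=f_1$, so $\wt h:f_0\simeq f_1$, and $P(v)\wt h=h$, which is exactly the asserted commutativity.

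The only real subtlety, and the step I would be most careful about, is confirming that $\ov v$ is genuinely a trivial fibration with codomain exactly the pull-back that the hypothesis diagram refers to. Axiom (P$_5$) gives the fibration part directly; Lemma $\ref{induit_we}$ gives the weak-equivalence part, but I should note that Lemma $\ref{induit_we}$ is stated for a bare weak equivalence whereas here $v$ is a trivial fibration, which is in particular a weak equivalence, so the lemma applies verbatim. One should also verify that the pull-back $\Pp(v,v)$ exists: since $(\delta^0_B,\delta^1_B):P(B)\to B\times B$ is a fibration by (P$_2$), axiom (P$_3$) guarantees the fibre product $A\times A\times_{B\times B}P(B)$ exists. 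With these points in place the proof is a direct translation between the pull-back description of $\Pp(v,v)$ and the commutative diagram in the statement, so no further computation is needed.
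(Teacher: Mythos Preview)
Your proof is correct and follows essentially the same approach as the paper: package the data $(f_0,f_1,h)$ as a map $H:C\to\Pp(v,v)$, observe via (P$_5$) and Lemma~\ref{induit_we} that $\ov v:P(A)\to\Pp(v,v)$ is a trivial fibration, and invoke cofibrancy of $C$ to lift. The paper's argument is just a terser version of yours.
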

\begin{proof}
Let $H=(f_0,f_1,h)$ and consider the solid diagram of $\Cc$
$$\xymatrix{
&&P(A)\ar@{->>}[d]^{\ov{v}}_{\wr}\\
C\ar@{.>}[urr]^{\wt h}\ar[rr]^{H}&&\Pp(v,v)&.
}$$

By (P$_5$) and 
Lemma $\ref{induit_we}$ the map
$\ov{v}=((\delta^0_A,\delta^1_A),P(v))$ is a trivial fibration.
Since $C$ is cofibrant, there exists $\wt h$ such that $\ov{v}\wt h=H$.
Hence $(\delta^0_A,\delta^1_A)\wt h=(f_0,f_1)$ and $P(v)\wt h=h$.
\end{proof}

\begin{prop}[cf. \cite{GM}, Corollary 10.7]\label{relequiv}
The homotopy relation in a P-category is an equivalence relation for morphisms whose source is cofibrant. 
\end{prop}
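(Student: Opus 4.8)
The plan is to establish the three defining properties of an equivalence relation for homotopy of morphisms $C \to B$ with $C$ cofibrant. Reflexivity is immediate from Lemma~\ref{reflexiva} and holds in any category with a functorial path, with no cofibrancy needed; the constant homotopy $\iota_B f : f \simeq f$ does the job. Symmetry follows from the existence of the symmetry $\tau$ of $P$: if $h : f \simeq g$, then $\tau_B h : g \simeq f$ by Lemma~\ref{simetrica}, and again cofibrancy of $C$ plays no role. So the entire content of the statement is transitivity, and this is where the cofibrancy hypothesis enters.

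For transitivity, suppose $h : f \simeq g$ and $h' : g \simeq h''$ are homotopies $C \to P(B)$ (renaming the third morphism to avoid clashing with $h'$). First I would form the double mapping path $\Pp(\delta^1_B)$ via the pull-back of $\delta^0_B$ and $\delta^1_B$ along themselves — more precisely, the pull-back defining $\Pp(\delta^1_B)$ in Lemma~\ref{profib}. The pair $(h, h') : C \to P(B) \times_B P(B)$ is well defined because $\delta^1_B h = g = \delta^0_B h'$, so it lands in the appropriate fibre product $\Pp(\delta^1_B)$. Now invoke Lemma~\ref{profib}: the canonical map $\pi_B : P^2(B) \to \Pp(\delta^1_B)$ is a trivial fibration. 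Since $C$ is cofibrant, the square
$$
\xymatrix{
& P^2(B) \ar@{->>}[d]^{\pi_B}_{\wr}\\
C \ar@{.>}[ur]^{\wt H} \ar[r]_-{(h,h')} & \Pp(\delta^1_B)
}
$$
admits a lift $\wt H : C \to P^2(B)$ with $\pi_B \wt H = (h,h')$. Unwinding the definition of $\pi_B$ (whose two structure maps to $P(B)$ are $\delta^0_{P(B)}$ and $P(\delta^1_B)$), this gives $\delta^0_{P(B)}\wt H = h$ and $P(\delta^1_B)\wt H = h'$.

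It remains to extract from $\wt H$ a single homotopy from $f$ to $h''$. The natural candidate is $H := \nabla_B \wt H : C \to P(B)$, where $\nabla$ is the folding map of $P$. Using the folding-map identities $\delta^k_B \nabla_B = \delta^k_B \delta^k_{P(B)}$, I compute $\delta^0_B H = \delta^0_B \delta^0_{P(B)} \wt H = \delta^0_B h = f$ and $\delta^1_B H = \delta^1_B \delta^1_{P(B)} \wt H$. The last expression requires a small diagram chase: from $P(\delta^1_B)\wt H = h'$ one gets $\delta^1_B P(\delta^1_B)\wt H = \delta^1_B h' = h''$, and by naturality of $\delta^1$ applied to $\delta^1_B : P(B) \to B$ one has $\delta^1_B P(\delta^1_B) = \delta^1_B \delta^1_{P(B)}$, so indeed $\delta^1_B H = h''$. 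Hence $H : f \simeq h''$, proving transitivity.

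The main obstacle is purely bookkeeping: matching the two structure maps of the pull-back $\Pp(\delta^1_B)$ correctly against $h$ and $h'$ (an off-by-one slip between $\delta^0$ and $\delta^1$, or between $\delta_B$ and $\delta_{P(B)}$, would derail the computation), and checking that the folding map $\nabla$ together with its compatibility axioms is exactly what converts the ``square'' homotopy $\wt H \in P^2(B)$ into a genuine homotopy in $P(B)$ with the correct endpoints. Everything else — reflexivity, symmetry, and the existence of the lift itself — is a direct application of Lemmas~\ref{reflexiva}, \ref{simetrica}, \ref{profib} and Definition~\ref{defcof}. I do not expect to need axiom (P$_5$) directly here, only through its use inside Lemma~\ref{profib}.
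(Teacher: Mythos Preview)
Your proposal is correct and follows essentially the same argument as the paper: reflexivity and symmetry via Lemmas~\ref{reflexiva} and~\ref{simetrica}, and transitivity by lifting $(h,h')$ along the trivial fibration $\pi_B:P^2(B)\to\Pp(\delta^1_B)$ of Lemma~\ref{profib} and then applying the folding map $\nabla_B$ to the lift. The only differences are notational (the paper writes $\Ll$ for your $\wt H$ and $h\wt{+}h'$ for your $H$), and your explicit naturality check $\delta^1_B P(\delta^1_B)=\delta^1_B\delta^1_{P(B)}$ is exactly the identity the paper records as $\delta^k\nabla=\delta^k\delta^k=\delta^kP(\delta^k)$.
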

\begin{proof}
By Lemma $\ref{reflexiva}$ the homotopy relation is reflexive. By Lemma $\ref{simetrica}$ it is symmetric.
We prove transitivity. Let $C$ be a cofibrant object and let $f,f',f'':C\to A$ be morphisms
 together with homotopies $h:f\simeq f'$ and $h':f'\simeq f''$.
Consider the solid diagram
$$
\xymatrix{
&&P^2(A)\ar@{->>}[d]^{\pi_A}_{\wr}\\
C\ar@{.>}[rru]^\Ll\ar[rr]_{(h,h')}&&\Pp(\delta^1_A)&.
}
$$
By Lemma $\ref{profib}$ the map $\pi_A=(\delta^0_{P(A)},P(\delta^1_A))$ is a trivial fibration.  Since $C$ is cofibrant,
there exists a dotted arrow $\Ll$
such that $\pi_A\Ll=(h,h')$. Therefore $\delta^0_{P(A)}\Ll=h$ and $P(\delta^1_A)\Ll=h'$.\\
Let $h\wt{+}h':=\nabla_A\Ll$,
where $\nabla$ is the folding map (see Definition $\ref{foldingmap}$) and satisfies
$\delta^k\nabla=\delta^k\delta^k=\delta^kP(\delta^k)$.
Then $\delta^0_A(h\wt{+}h')=f$ and $\delta^1_A(h\wt{+}h')=f''$. Hence
$h\wt{+}h':f\simeq f''$.
\end{proof}

Given a cofibrant object $C$ of a P-category $\Cc$, denote by
$[C,A]:=\Cc(C,A)/\simeq $
the class of maps from $C$ to $A$ modulo homotopy.
Denote by $\Cc_{cof}$ the full subcategory of cofibrant objects of $\Cc$ and by
$\pi\Cc_{cof}$ the quotient category $\pi\Cc_{cof}(A,B)=[A,B]$.

\begin{prop}[cf. \cite{GM}, Theorem 10.8]\label{bijecciocofibrants}
Let $C$ be a cofibrant object of a P-category $\Cc$. Every weak equivalence $w:A\stackrel{\sim}{\lra} B$ of $\Cc$
induces a bijection
$w_*:[C,A]\lra [C,B]$.
\end{prop}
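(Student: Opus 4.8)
The plan is to reduce to the case where the weak equivalence is a trivial fibration and then invoke the lifting properties of cofibrant objects. As a preliminary step I would record that any homotopy equivalence $e\colon X\to Y$ induces a bijection $e_*\colon[C,X]\to[C,Y]$ whenever $C$ is cofibrant: by Proposition \ref{relequiv} the homotopy relation is an equivalence relation on $\Cc(C,-)$, so these sets are well defined and $e_*$ is well defined by Lemma \ref{reflexiva}; and if $e'$ is a homotopy inverse of $e$, then Lemma \ref{reflexiva} gives $e'ef\sim f$ and $ee'g\sim g$ for all $f\colon C\to X$, $g\colon C\to Y$, so $e_*$ and $e'_*$ are mutually inverse.

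Next I would apply the Factorization Lemma \ref{factoritza_brown} to the weak equivalence $w\colon A\stackrel{\sim}{\lra}B$, writing $w=q\circ\iota$ with $\iota\colon A\to\Pp(w)$ a homotopy equivalence (part (2) of that lemma) and $q\colon\Pp(w)\to B$ a \emph{trivial} fibration (part (3), which is precisely where the hypothesis $w\in\Ww$ is used, via the two out of three property of $\Ww$). Since $w_*=q_*\circ\iota_*$ and $\iota_*$ is bijective by the preliminary step, this reduces the problem to the case where $w$ is a trivial fibration $v\colon A\twoheadrightarrow B$.

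For such a $v$, surjectivity of $v_*$ is immediate: any $f\colon C\to B$ lifts through $v$ because $C$ is cofibrant (Definition \ref{defcof}). For injectivity I would argue as follows. If $v_*[g_0]=v_*[g_1]$ with $g_0,g_1\colon C\to A$, then $vg_0\simeq vg_1$ — here I use that the homotopy relation on $\Cc(C,B)$ is already transitive by Proposition \ref{relequiv}, so a single homotopy $h\colon C\to P(B)$ with $\delta^0_Bh=vg_0$, $\delta^1_Bh=vg_1$ exists. Then the triple $(g_0,g_1,h)$ fits into the solid diagram of Lemma \ref{holift}, whose conclusion produces a homotopy $\wt h\colon C\to P(A)$ with $\delta^0_A\wt h=g_0$ and $\delta^1_A\wt h=g_1$, i.e. $g_0\simeq g_1$. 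Hence $[g_0]=[g_1]$.

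I expect the only genuinely delicate point to be the reduction in the second paragraph: one must make sure the factor $q$ coming out of the mapping path factorization is a trivial fibration and not merely a fibration, which is exactly the content of Lemma \ref{factoritza_brown}(3). Once $w$ has been replaced by a trivial fibration, both halves of the statement are direct consequences of the defining lifting property of cofibrant objects together with the homotopy lifting property of Lemma \ref{holift}.
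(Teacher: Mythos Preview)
Your argument is correct, but it is organized differently from the paper's. You first isolate the case of a homotopy equivalence, then use Lemma~\ref{factoritza_brown} to reduce an arbitrary weak equivalence $w$ to a trivial fibration $q$, and finally handle trivial fibrations directly via Definition~\ref{defcof} (surjectivity) and Lemma~\ref{holift} (injectivity). The paper, by contrast, does not perform a global reduction to trivial fibrations. Its surjectivity step is essentially the same as yours, but its injectivity step works directly with the given weak equivalence $w$: it packages the data $(f_0,f_1,h)$ into a map $H:C\to\Pp(w,w)$, uses Lemma~\ref{induit_we} to see that $\ov{w}:P(A)\to\Pp(w,w)$ is a weak equivalence, and then bootstraps by applying the \emph{already proven surjectivity} to $\ov{w}_*$ to find $G:C\to P(A)$ with $\ov{w}G\simeq H$, yielding a chain $f_0\simeq\delta^0_AG\simeq\delta^1_AG\simeq f_1$. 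Your route is more modular and makes Lemma~\ref{holift} do the essential lifting work; the paper's route avoids the preliminary lemma on homotopy equivalences but needs the double mapping path and the bootstrap. Either way, the key ingredients---the factorization of Lemma~\ref{factoritza_brown}, the cofibrant lifting property, and transitivity of homotopy from Proposition~\ref{relequiv}---are the same.
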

\begin{proof}
We first prove surjectivity. Let $w:A\to B$ be a weak equivalence.
By Lemma $\ref{factoritza_brown}$, for every morphism $f:C\to B$ we have a solid diagram
$$
\xymatrix{
&A\ar@<.6ex>[d]^{\iota}\ar@/^2pc/[dd]^{w}\\
&\Pp(w)\ar@<.6ex>[u]^{p}\ar@{->>}[d]^{q}_{\wr}\\
C\ar@{.>}[ur]^{g'}\ar[r]^{f}&B&,
}
$$
where $q$ is a trivial fibration, $q \iota=w$ and $\iota p\simeq 1$. Since $C$ is cofibrant,
there exists $g'$ such that $q g'=f$. Let $g:=p g'$.
Then
$wg=q\iota p g'\simeq q g'=f$. Therefore $[wg]=[f]$, and $w_*$ is surjective.

To prove injectivity, let $f_0,f_1:C\to B$ be two morphisms of $\Cc$ such that $h:wf_0\simeq wf_1$.
Let $H=(f_0,f_1,h)$ and consider the solid diagram
$$\xymatrix{
&&P(A)\ar[d]^{\ov{w}}_{\wr}\\
C\ar@{.>}[urr]^{G}\ar[rr]^-{H}&&\Pp(w,w)&.
}$$
By Lemma $\ref{induit_we}$ the map $\ov{w}=((\delta^0_A,\delta^1_A),P(w))$ is a weak equivalence.
Since $\ov{w}_*$ is surjective,
there exists a dotted arrow $G$ such that $\ov{w}G\simeq H$.
It follows that
$f_0\simeq \delta^0_AG\simeq \delta^1_A G\simeq f_1$. Then $f_0\simeq f_1$ by Lemma $\ref{relequiv}$.
\end{proof}
It follows from Proposition $\ref{bijecciocofibrants}$, that if a P-category $\Cc$ has enough cofibrant models,
then the triple $(\Cc,\Ss,\Ww)$ is a left Cartan-Eilenberg category. In particular the inclusion
induces an equivalence of categories $\pi\Cc_{cof}\stackrel{\sim}{\lra}\Ho(\Cc)$.
We prove this statement in a more general situation, that of a subcategory of a P-category having enough cofibrant models.

\begin{teo}\label{Pcat_es_CE}
Let $(\Cc,P,\Ff,\Ww)$ be a P-category and
let $\Dd$ be a full subcategory of $\Cc$ such that:
\begin{enumerate}[(i)]
\item The mapping path $\Pp(f)$ of a morphism $f:A\to B$ between objects of $\Dd$ is an object of $\Dd$.
\item There is a full subcategory $\Mm\subset\Dd\cap \Cc_{cof}$ such that
for every object $D$ of $\Dd$ there exists an object $M\in\Mm$ together with a
weak equivalence $M\stackrel{\sim}{\lra} D$.
\end{enumerate}
Then the triple $(\Dd,\Ss,\Ww)$ is a left Cartan-Eilenberg category and $\Mm$ is a full subcategory of cofibrant models. The inclusion induces an
equivalence of categories
$\pi \Mm\stackrel{\sim}{\lra}\Ho(\Dd).$
\end{teo}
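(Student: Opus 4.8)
The plan is to deduce the statement from the general theory of Cartan--Eilenberg categories of \cite{GNPR}; the substantive point is to recognise the P-category notion of cofibrant object (Definition \ref{defcof}) as the localization-theoretic one used there, and to verify the inclusion $\Ss\subseteq\Ww$.

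First I would record that $\Dd$ carries a functorial path with a symmetry and a coproduct (and the remaining structural transformations): since $P(A)$ is the mapping path $\Pp(1_A)$, hypothesis (i) yields $P(\Dd)\subseteq\Dd$, so all the natural transformations of $P$ restrict to $\Dd$; more generally hypothesis (i) keeps the mapping-path constructions of Section 2 (the factorization of Lemma \ref{factoritza_brown} and objects such as $\Pp(f,f')$ and $P^2(A)$) inside $\Dd$. In particular Proposition \ref{loccongruencia} applies to $\Dd$ and identifies the localization $\Dd[\Ss^{-1}]$ with the homotopy quotient $\pi\Dd=\Dd/\!\sim$.

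Next I would show that each $M\in\Mm$ is a cofibrant object of $\Dd$ in the sense of \cite{GNPR}, i.e. that $w_*\colon\Dd[\Ss^{-1}](M,X)\to\Dd[\Ss^{-1}](M,Y)$ is bijective for every weak equivalence $w\colon X\to Y$ of $\Dd$. Since $M\in\Mm\subseteq\Cc_{cof}$, Proposition \ref{relequiv} shows that the homotopy relation on $\Cc(M,-)=\Dd(M,-)$ is already an equivalence relation and hence coincides with $\sim$; combined with the identification $\Dd[\Ss^{-1}]\cong\Dd/\!\sim$ above, this gives $\Dd[\Ss^{-1}](M,Z)=[M,Z]$ naturally in $Z$, and under this identification $w_*$ is exactly the bijection $[M,X]\to[M,Y]$ of Proposition \ref{bijecciocofibrants}. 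By hypothesis (ii) every object of $\Dd$ receives a weak equivalence from an object of $\Mm$, so $\Mm$ is a full subcategory of cofibrant objects and each object of $\Dd$ has a cofibrant model in $\Mm$. Granting $\Ss\subseteq\Ww$, this means precisely that $(\Dd,\Ss,\Ww)$ is a left Cartan--Eilenberg category, and the general theory of \cite{GNPR} then gives that the inclusion of the cofibrant objects induces an equivalence onto $\Ho(\Dd)=\Dd[\Ww^{-1}]$ which restricts to an equivalence $\pi\Mm\stackrel{\sim}{\lra}\Ho(\Dd)$, with $\pi\Mm=\Mm/\!\sim$ (equivalently $\Mm[\Ss^{-1}]$).

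The one step that is not formal bookkeeping is the inclusion $\Ss\subseteq\Ww$: that every homotopy equivalence of a P-category is a weak equivalence. I would argue from the axioms. If $h\colon f\simeq g$ then $\delta^0_Bh=f$ with $\delta^0_B$ a trivial fibration, so by the two out of three property $f\in\Ww\Leftrightarrow h\in\Ww\Leftrightarrow g\in\Ww$; hence homotopic morphisms, and so $\sim$-equivalent morphisms, have the same weak-equivalence status. For a homotopy equivalence $f$, Lemma \ref{factoritza_brown} factors it as $f=q\iota$ with $\iota$ a homotopy equivalence whose homotopy inverse $p$ is a trivial fibration, so $\iota\in\Ww$ and $f\in\Ww$ iff $q\in\Ww$; and $q$ is a fibration which is a homotopy equivalence. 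For such a $q$, a homotopy inverse together with a chosen homotopy produces a genuine section $\sigma$ with $\sigma q\simeq 1$ (hence $\sigma q\in\Ww$), and one concludes $q\in\Ww$ from the P-axioms. This last point is the main technical obstacle; everything else reduces to transporting Propositions \ref{relequiv}, \ref{loccongruencia} and \ref{bijecciocofibrants} through the localization and invoking \cite{GNPR}.
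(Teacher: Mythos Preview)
Your overall approach is the same as the paper's: restrict the path to $\Dd$ via hypothesis (i), use Proposition \ref{bijecciocofibrants} together with the identification $\Dd[\Ss^{-1}]\cong\pi\Dd$ from Proposition \ref{loccongruencia} to see that each $M\in\Mm$ is cofibrant in the sense of \cite{GNPR}, and then invoke Theorem 2.3.4 of \cite{GNPR} to obtain the equivalence $\pi\Mm\simeq\Ho(\Dd)$.

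The one genuine divergence, and the only place where your argument is incomplete, is the inclusion of $\Ss$ in $\Ww$. You aim for the strict inclusion $\Ss\subset\Ww$, reduce it to showing that a fibration which is a homotopy equivalence is a weak equivalence, and then admit this last step as an unresolved obstacle. The paper bypasses this entirely: for the Cartan--Eilenberg framework one only needs $\Ss$ to lie in the \emph{saturation} $\overline\Ww$, and this is immediate from (P$_2$). Indeed, since $\iota_A\in\Ww$ and $\delta^0_A\iota_A=\delta^1_A\iota_A=1_A$, the maps $\delta^0_A$ and $\delta^1_A$ become equal isomorphisms in $\Ho(\Dd)$; hence homotopic maps agree in $\Ho(\Dd)$, and every homotopy equivalence becomes an isomorphism there. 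Replacing your incomplete $\Ss\subset\Ww$ argument by this one-line observation makes your proof complete and essentially identical to the paper's; the section-lifting step you worried about is simply not needed.
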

\begin{proof}
By (i), the functorial path of $\Cc$ restricts to a functorial path in $\Dd$.
By (P$_2$) the class $\Ss$ of strong equivalences is contained in the saturation of $\Ww$.
Hence the triple $(\Dd,\Ss,\Ww)$ is a category with strong and weak equivalences.
By Proposition $\ref{bijecciocofibrants}$ every object in $\Mm$ is Cartan-Eilenberg cofibrant in $(\Dd,\Ss,\Ww)$.
Hence by (ii), the triple $(\Dd,\Ss,\Ww)$ is a Cartan-Eilenberg category with left models in $\Mm$.
By Theorem 2.3.4 of \cite{GNPR}, the inclusion induces an equivalence of categories
$\Mm[\Ss^{-1},\Dd]\stackrel{\sim}{\lra} \Ho(\Dd)$, where $\Mm[\Ss^{-1},\Dd]$ denotes the full subcategory of $\Dd[\Ss^{-1}]$ whose objects are in $\Mm$.
The equivalence $\pi\Mm\stackrel{\sim}{\lra}\Mm[\Ss^{-1},\Dd]$ follows from
Proposition $\ref{loccongruencia}$.
\end{proof}

\section{Examples of P-categories}

The archetypal example of a P-category is given by the category of topological spaces, with 
the weak equivalences being
continuous maps that induce isomorphisms on all homotopy groups.
From the algebraic side, the basic example is given by the category of differential graded algebras over a field of characteristic zero, 
with the class of weak equivalences defined by those morphisms inducing isomorphisms on the cohomology groups.
In this section we present both examples in detail. We also provide a criterion of structure transfer
and apply it to two fundamental examples appearing in mixed Hodge theory:
 the categories of filtered and bifiltered differential graded algebras.

\subsection{Topological spaces}\label{topologics}
Let $I=[0,1]\subset \RR$ denote the unit interval.
Given a topological space $X$, let $P(X):=X^I$ be the set of all maps
$\sigma:I\to X$ with the compact open topology.
Define structural maps $\iota_X:X\to P(X)$ and $\delta^k:P(X)\to X$
by $\iota_X(x)(t)=x$, and $\delta^k_X(\sigma)=\sigma(k)$, for $k=0,1$.
The structure for the functorial path $P$ (symmetry, coproduct, interchange and folding map) is obtained
from the maps of $\ref{cilindre_top}$, through the bijection
$\mathsf{Top}(X,P(Y)){\rightleftarrows}\mathsf{Top}(X\times I,Y)$.

\begin{defi}\label{serrefib}
A morphism $v:X\to Y$ of topological spaces is called \textit{Serre fibration} if
for every $n\geq 0$, and
any commutative diagram
$$
\xymatrix{
U\ar[d]_{i_0}\ar[r]^f&X\ar[d]^v\\
U\times I\ar@{.>}[ur]^{H}\ar[r]_-G&B&,
}
$$
where $U$ is the unit disk of $\RR^n$,
a dotted arrow $H$ exists, making the diagram commute.
\end{defi}

\begin{defi}
A map $w:X\to Y$ of topological spaces is called \textit{weak homotopy equivalence} if
the induced map $w_*:\pi_0(X)\to \pi_0(Y)$ is a bijection and $w_*:\pi_n(X,x)\to \pi_n(Y,w(x))$ is an isomorphism
for every $x\in X$ and every $n\geq 1$.
\end{defi}

With the above definitions it follows that every CW-complex is cofibrant in the sense of Definition $\ref{defcof}$. We have:
\begin{prop}
The category $\mathsf{Top}$ of topological spaces with the classes
$\Ff=\{\text{Serre fibrations}\}$ and $\Ww=\{\text{weak homotopy equivalences}\}$
and the functorial path $P(X)=X^I$ is a P-category.
The inclusion induces an equivalence of categories
$\pi\mathsf{CW}\stackrel{\sim}{\lra}\Ho(\mathsf{Top}).$
\end{prop}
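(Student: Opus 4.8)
The plan is to verify the five axioms (P$_1$)–(P$_5$) for $(\Top, P, \Ff, \Ww)$ with $P(X) = X^I$, and then to deduce the stated equivalence of categories from Theorem $\ref{Pcat_es_CE}$. First I would check that $\Top$ has finite products and a final object (the one-point space), and that the structural maps $\iota, \delta^0, \delta^1$ together with the symmetry, interchange, coproduct and folding map indeed exist: these are all obtained by dualizing the maps of $\ref{cilindre_top}$ under the exponential adjunction $\Top(X, Y^I) \cong \Top(X \times I, Y)$, which holds since $I$ is locally compact Hausdorff. Axiom (P$_1$) is standard for Serre fibrations and weak homotopy equivalences: isomorphisms are Serre fibrations and weak equivalences, both classes are closed under composition, two out of three holds for weak homotopy equivalences (via the long exact sequence of a fibration, or more elementarily by general nonsense), and $X \to \ast$ is always a Serre fibration.

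For (P$_2$) I would argue that $\iota_X \colon X \to X^I$ is a homotopy equivalence (the contraction of $I$ to a point induces a deformation retraction of $X^I$ onto $X$), hence a weak homotopy equivalence; that $(\delta^0_X, \delta^1_X) \colon X^I \to X \times X$ is a Serre fibration, which is the classical fact that the path fibration has the homotopy lifting property with respect to disks (solving the relevant lifting problem by a retraction of $U \times I \times I$ onto $U \times I \times \{0,1\} \cup U \times \{0\} \times I$); and that $\delta^0_X, \delta^1_X$ are trivial fibrations, being fibrations (retract of the previous one, or directly) and homotopy inverses of the weak equivalence $\iota_X$. Axiom (P$_3$) asserts the existence of pullbacks along Serre fibrations with the stability properties; pullbacks always exist in $\Top$, the pullback of a Serre fibration is a Serre fibration (by the universal property, lifting a square into the pullback reduces to lifting into the two factors), the pullback of a trivial fibration is a weak equivalence (one may use that trivial Serre fibrations are precisely the maps with the right lifting property against $S^{n-1} \hookrightarrow D^n$, a property stable under pullback), and the pullback of a weak equivalence along a fibration is a weak equivalence (the classical ``right properness'' of $\Top$, provable via comparison of the long exact homotopy sequences of the two fibrations and the five lemma). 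Axiom (P$_4$) follows because $(-)^I$ is a right adjoint, hence preserves pullbacks, and it preserves Serre fibrations and weak homotopy equivalences: for fibrations because $U \times I \cong U \times I$ and adjunction turns a lifting problem for $X^I \to Y^I$ into one for $X \to Y$, and for weak equivalences because $\pi_n(X^I) \cong \pi_n(X)$ naturally (again using that $X^I \simeq X$).

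Axiom (P$_5$) is the one I expect to be the main obstacle: it is the relative lifting statement that for a Serre fibration $v \colon A \twoheadrightarrow B$ the induced map $\ov v \colon A^I \to \Pp(v,v)$ is a Serre fibration. Concretely, writing everything out under adjunction, this is a cubical homotopy lifting property in dimension $2$: given a map on $U \times \partial I$ (the two ``ends'' in the path direction) that already lifts, plus a lift downstairs on all of $U \times I$, and a compatible lift on $U \times \{0\} \times I$, one must extend to a lift on $U \times I \times I$; this reduces to a retraction of $U \times I \times I$ onto the union of the three faces $U \times I \times \{0\} \cup U \times \{0,1\} \times I \cup$ (the image of the solid data), which is a standard but slightly fiddly point-set construction. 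I would either carry out this retraction explicitly or, more cleanly, invoke that Serre fibrations have the homotopy lifting property with respect to all CW-pairs and identify the relevant inclusion $(\text{box}) \hookrightarrow (\text{cube})$ as such a pair; the reference to \cite{KP}, pag. 86, covers precisely this point. Finally, with the P-category structure established, one notes that every CW-complex is cofibrant in the sense of Definition $\ref{defcof}$ (the lifting problem against a trivial fibration is solved cell by cell, since trivial Serre fibrations have the right lifting property against $S^{n-1} \hookrightarrow D^n$), the mapping path $\Pp(f)$ of a map between CW-complexes has the homotopy type of a CW-complex and one may replace it by one, and by CW approximation every space receives a weak equivalence from a CW-complex. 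Applying Theorem $\ref{Pcat_es_CE}$ with $\Dd = \Top$, $\Mm = \mathsf{CW}$ then yields the equivalence $\pi\mathsf{CW} \stackrel{\sim}{\lra} \Ho(\Top)$.
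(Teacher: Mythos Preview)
Your proposal is correct and follows essentially the same approach as the paper: verify (P$_1$)--(P$_5$) (with (P$_5$) the only nontrivial axiom, handled by a cubical retraction argument or the reference the paper cites), observe that CW-complexes are cofibrant and that CW approximation supplies enough cofibrant models, then apply Theorem~$\ref{Pcat_es_CE}$ with $\Dd=\Top$ and $\Mm=\mathsf{CW}$. Your aside about the mapping path of a map between CW-complexes is unnecessary---since $\Dd=\Top$, condition (i) of Theorem~$\ref{Pcat_es_CE}$ is automatic---but this does not affect the argument.
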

\begin{proof}
Axioms (P$_1$) to (P$_4$) are standard. A proof of (P$_5$) can be found in \cite{Ba2}, pag. 133.
Every CW-complex is cofibrant,
and every space is weakly equivalent to a CW-complex (see for example \cite{Q1}, \cite{DS}, \cite{Hov}).
The equivalence follows from Theorem $\ref{Pcat_es_CE}$.
\end{proof}

\subsection{Differential graded algebras}\label{secciodgas}
Consider the category $\dga{}{\kk}$ of dga's over a field $\kk$ of characteristic 0.
The field $\kk$ is the initial object, and 0 is the final object.
The functorial path is defined by
$$P(A)=A[t,dt]=A\otimes(t,dt)\text{ and }P(f)=f\otimes 1,$$
together with structural maps
$\iota_A=1_A\otimes 1$, and $\delta^k_A(a(t))=a(k)$, for $k=0,1$.
For $n\geq 1$,
$$P^n(A)=A[t_1,dt_1,\cdots,t_ndt_n]=A\otimes(t_1,dt_1)\otimes\cdots\otimes(t_n,dt_n).$$
The following maps are defined dually to the maps of $\ref{cilindre_top}$.
\begin{enumerate}[1.]
 \item  Symmetry. Define $\tau_A:A[t,dt]\to A[t,dt]$  by $t\mapsto 1-t$.
 \item   Interchange. Define $\mu_A:A[t,dt,s,ds]\to A[t,dt,s,ds]$ by $t\mapsto s$ and $s\mapsto t$.
 \item   Coproduct. Define  $c_A:A[t,dt]\to A[t,dt,s,ds]$  by $t\mapsto ts$,
 \item  Folding map. Define  $\nabla_A:A[t,dt,s,ds]\to A[t,dt]$ by $t\mapsto t$ and $s\mapsto t$.
\end{enumerate}

Denote by $\dga{0}{\kk}$ the category of \textit{0-connected dga}'s:
these are the dga's $(A,d)$ such that the unit $\eta:\kk\to A$ 
induces an isomorphism $\kk\cong H^0(A)$. By Sullivan's theory of minimal models
 (\cite{Su}, see also \cite{BG}, \cite{GM} or \cite{FHT}), every 0-connected dga has a Sullivan minimal model, and 
Sullivan minimal dga's are cofibrant.
Denote by $\Sdga{}{\kk}$ the full subcategory of Sullivan minimal dga's. We have:

\begin{prop}\label{dga_pcat}
The category $\dga{}{\kk}$ with
$\Ff=\{\text{surjections}\}$ and $\Ww=\{\text{quasi-isomorphisms}\}$,
and the functorial path $P(A)=A[t,dt]$, is a P-category.
The inclusion induces an equivalence of categories
$\pi\Sdga{}{\kk}\stackrel{\sim}{\lra}\Ho\left(\dga{0}{\kk}\right).$
\end{prop}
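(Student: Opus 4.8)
The plan is to verify the five P-category axioms for $\dga{}{\kk}$ directly, then deduce the equivalence of categories from Theorem~\ref{Pcat_es_CE} applied to the subcategory $\dga{0}{\kk}$. Axiom (P$_1$) is immediate: isomorphisms are both surjective and quasi-isomorphisms, surjections and quasi-isomorphisms are each closed under composition, quasi-isomorphisms satisfy two out of three since $H^*$ is a functor to graded vector spaces, and $A\to 0$ is surjective for every $A$. For (P$_2$) one uses the standard fact that $\kk[t,dt]$ has trivial reduced cohomology, so $\iota_A\colon A\to A[t,dt]$ is a quasi-isomorphism; the evaluation maps $\delta^k_A$ are surjective (they admit the section $\iota_A$) and a quasi-isomorphism (being a retraction of a quasi-isomorphism, again by two out of three), and $(\delta^0_A,\delta^1_A)\colon A[t,dt]\to A\times A$ is surjective because any pair $(a_0,a_1)$ is hit by $a_0+t(a_1-a_0)$, say, corrected to lie in the tensor algebra.

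For (P$_3$): in $\dga{}{\kk}$ a pullback along a surjection $v\colon B\twoheadrightarrow C$ is the ordinary fibre product $A\times_C B$ of graded algebras with the componentwise differential, and $\pi_1\colon A\times_C B\to A$ is surjective because $v$ is. That $\pi_1$ is a quasi-isomorphism when $v$ is, and $\pi_2$ a quasi-isomorphism when $u$ is, follows from the long exact (Mayer--Vietoris) sequence attached to the short exact sequence $0\to A\times_C B\to A\oplus B\to C\to 0$, together with the two out of three property; here one uses surjectivity of $v$ to get the short exactness on the right. Axiom (P$_4$) is straightforward from the explicit formula $P(A)=A\otimes(t,dt)$: tensoring with the fixed complex $(t,dt)$ over $\kk$ is exact, so it preserves quasi-isomorphisms and surjections, and it commutes with the fibre product since $(t,dt)$ is finite-dimensional (equivalently, $-\otimes(t,dt)$ is a left adjoint that here also preserves these particular limits by direct inspection of underlying graded vector spaces).

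The main obstacle is axiom (P$_5$), the cubical homotopy lifting property in dimension two: given a surjection $v\colon A\twoheadrightarrow B$, one must show that $\ov v\colon P(A)\to\Pp(v,v)$ is surjective. Concretely, $\Pp(v,v)$ consists of triples $(a_0,a_1,\beta)$ with $a_0,a_1\in A$, $\beta\in B[t,dt]$, and $\beta(k)=v(a_k)$; one needs $\alpha\in A[t,dt]$ with $\alpha(k)=a_k$ and $(v\otimes 1)(\alpha)=\beta$. I would construct $\alpha$ by writing $\beta=\sum_j b_j\otimes p_j(t,dt)$, lifting each coefficient $b_j$ through the surjection $v$ (using that $v$ is surjective in every degree and that $A[t,dt]\to B[t,dt]$ is surjective), obtaining some lift $\tilde\alpha$ of $\beta$, and then correcting $\tilde\alpha$ by an element of $(\ker v)[t,dt]$ that is supported in the ideal $(t,1-t,dt)$ — i.e. vanishing at $t=0,1$ — so as to fix the boundary values $a_0,a_1$; the point is that $\tilde\alpha(k)-a_k$ lies in $\ker v$ and can be multiplied into the contractible augmentation ideal of $\kk[t,dt]$ relative to the two endpoints. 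Alternatively one cites the dga analogue of the relative cylinder axiom as in the literature (Baues, or the cited \cite{GM}). Once (P$_1$)--(P$_5$) hold, $\dga{}{\kk}$ is a P-category. Finally, taking $\Dd=\dga{0}{\kk}$ and $\Mm=\Sdga{}{\kk}$: condition (i) of Theorem~\ref{Pcat_es_CE} holds because the mapping path $\Pp(f)=A\times_B B[t,dt]$ of a map between $0$-connected dga's is again $0$-connected (its $H^0$ is computed by the Mayer--Vietoris sequence and equals $\kk$), and condition (ii) is exactly Sullivan's existence theorem for minimal models together with the fact, recalled in the excerpt, that Sullivan minimal dga's are cofibrant in the sense of Definition~\ref{defcof}. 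Theorem~\ref{Pcat_es_CE} then yields the equivalence $\pi\Sdga{}{\kk}\xrightarrow{\sim}\Ho(\dga{0}{\kk})$.
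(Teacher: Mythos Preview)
Your argument follows the same route as the paper's: verify the axioms directly and then invoke Theorem~\ref{Pcat_es_CE} with $\Dd=\dga{0}{\kk}$ and $\Mm=\Sdga{}{\kk}$. The paper in fact dismisses (P$_1$)--(P$_4$) as routine and only writes out (P$_5$); your explicit check of condition~(i) of Theorem~\ref{Pcat_es_CE} is more than the paper spells out.

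Two slips to fix. First, $\Lambda(t,dt)$ is not finite-dimensional over $\kk$; the reason $P(-)=(-)\otimes\Lambda(t,dt)$ commutes with fibre products is simply that tensoring with a free $\kk$-module is exact and preserves finite products. Second, and more substantively, your description of the (P$_5$) correction is backwards: the correction term should \emph{not} vanish at $t=0,1$ (and the ideal $(t,1-t,dt)$ is all of $\Lambda(t,dt)$ anyway, since $t+(1-t)=1$). You need a class in $(\ker v)[t,dt]$ whose evaluations at $0$ and $1$ are precisely $a_0-\tilde\alpha(0)$ and $a_1-\tilde\alpha(1)$, so that adding it to $\tilde\alpha$ fixes the endpoints without disturbing the projection to $B[t,dt]$. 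The paper's one-line formula does this: given a lift $\tilde b(t)\in A[t,dt]$ of $b(t)$, set
\[
a(t):=(a_0-\tilde b(0))(1-t)+(a_1-\tilde b(1))t+\tilde b(t);
\]
then $a(k)=a_k$ for $k=0,1$ and $(v\otimes 1)(a(t))=b(t)$ since $a_k-\tilde b(k)\in\ker v$.
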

\begin{proof}
The only non-trivial axiom is (P$_5$). The double mapping path of $v:A\twoheadrightarrow B$ is
$$\Pp(v,v)=\left\{(a_0,a_1,b(t))\in A\times A\times B[t,dt]; b(i)=v(a_i)\right\},$$
and the map $\ov v:A[t,dt]\lra\Pp(v,v)$ is given by
$\ov v(a(t))=(a(0),a(1),(v\otimes 1)(a(t)).$

Let $(a_0,a_1,b(t))\in \Pp(v,v)$. Since $v\otimes 1$ is surjective, there exists an element $\wt b(t)\in A[t,dt]$ such that
$(v\otimes 1)\wt b(t)=b(t)$. Let
$a(t):=(a_0-\wt b(0))(1-t)+(a_1-\wt b(1))t+\wt b(t).$
Then $\ov v(a(t))=(a_0,a_1,b(t))$. Therefore $\ov v$ is surjective, and (P$_5$) is satisfied.

By Proposition 7.7 of \cite{BG}, every 0-connected dga has a Sullivan minimal model.
By Proposition 6.4 of loc. cit. every Sullivan minimal dga is cofibrant.
The equivalence of categories follows from Theorem $\ref{Pcat_es_CE}$
with $\Cc=\dga{}{\kk}$, $\Dd=\dga{0}{\kk}$ and $\Mm=\Sdga{}{\kk}$.
\end{proof}

\subsection{Transfer of structures}

Let $\Cc$ be a category with finite products and a final object. Assume that $\Cc$ has
a functorial path $P$, together with a symmetry, 
an interchange, a coproduct and a folding map.
\begin{lem}\label{transfer_pcat} 
Let $(\Dd,P,\Ff,\Ww)$ be a P-category and $T:\Cc\to \Dd$ a functor such that:
\begin{enumerate}[(i)]
\item  For every object $A\in\Cc$, $T(P(A))=P(T(A))$, $T(\iota_A)=\iota_{T(A)}$, and $T(\delta^k_A)=\delta^k_{T(A)}$.
\item  Given morphisms $A\stackrel{u}{\to}C\stackrel{v}{\twoheadleftarrow}B$ of $\Cc$,
where $T(v)$ is a fibration, the fibre product $A\times_C B$ exists, and satisfies
$P(A\times_CB)=PA\times_{P(C)}P(B)$ and $T(A\times_CB)=T(A)\times_{T(C)}T(B).$
\end{enumerate}
Then the tuple $(\Cc,P,T^{-1}(\Ff),T^{-1}(\Ww))$ is a P-category.
\end{lem}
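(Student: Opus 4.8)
The plan is to verify the five axioms of Definition \ref{BCE_cat} for $(\Cc, P, T^{-1}(\Ff), T^{-1}(\Ww))$ by transporting each axiom from $\Dd$ along the functor $T$. Write $\Ff_\Cc = T^{-1}(\Ff)$ and $\Ww_\Cc = T^{-1}(\Ww)$; by construction $f \in \Ff_\Cc$ (resp. $\Ww_\Cc$) if and only if $T(f) \in \Ff$ (resp. $\Ww$), and a trivial fibration of $\Cc$ is precisely a morphism sent by $T$ into $\Ff \cap \Ww$. A preliminary observation, forced by (i) and (ii) together with the standing hypothesis that $\Cc$ has finite products and a final object, is that $T$ commutes with all the limits occurring in the axioms — the products $A \times A$, the final object, the (double) mapping paths, and the pullbacks of (P$_3$) and (P$_5$) — and also with the path $P$; once this is in place, the structural natural transformations are respected, e.g. $T(\iota_A) = \iota_{T(A)}$, $T((\delta^0_A,\delta^1_A)) = (\delta^0_{T(A)},\delta^1_{T(A)})$, and $T(\ov v) = \ov{T(v)}$.

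Granting this, axioms (P$_1$)--(P$_4$) are immediate. For (P$_1$): $T$ is a functor, hence preserves isomorphisms and composition, so $\Ff_\Cc$ and $\Ww_\Cc$ are closed under composition and contain all isomorphisms, and the two out of three property for $\Ww_\Cc$ follows from that of $\Ww$ applied to $T(f)$, $T(g)$ and $T(gf) = T(g)T(f)$; the map $A \to e$ is sent by $T$ to the terminal map of $T(A)$, a fibration by (P$_1$) for $\Dd$. Axiom (P$_2$) reduces to (P$_2$) for $\Dd$ via $T(\iota_A) = \iota_{T(A)}$ and $T(\delta^k_A) = \delta^k_{T(A)}$. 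Axiom (P$_3$) is exactly hypothesis (ii) combined with (P$_3$) for $\Dd$: (ii) gives existence of the fibre product and its preservation by $T$, and then $T(\pi_1)$, $T(\pi_2)$ are the corresponding projections in $\Dd$, whose membership in $\Ff$ or $\Ww$ is controlled by (P$_3$) for $\Dd$. Axiom (P$_4$) follows from $T \circ P = P \circ T$ (part of (i)) together with $P(\Ff) \subset \Ff$ and $P(\Ww) \subset \Ww$ in $\Dd$, while the compatibility $P(A \times_C B) = PA \times_{PC} PB$ is itself part of (ii).

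The only axiom with real content is (P$_5$). Given a fibration $v : A \twoheadrightarrow B$ of $\Cc$, so $T(v)$ is a fibration of $\Dd$, the double mapping path $\Pp(v,v)$ is the fibre product of $v \times v : A \times A \to B \times B$ along $(\delta^0_B,\delta^1_B) : P(B) \to B \times B$; the latter is a fibration of $\Cc$ by (P$_2$), so this pullback is of the form covered by (ii). Hence $\Pp(v,v)$ exists in $\Cc$ and $T(\Pp(v,v)) = \Pp(T(v),T(v))$, whence $T(\ov v) = \ov{T(v)}$; since $T(v)$ is a fibration, $\ov{T(v)}$ is a fibration of $\Dd$ by (P$_5$) for $\Dd$, i.e. $\ov v \in \Ff_\Cc$. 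This establishes (P$_5$) and finishes the proof. I expect the only delicate point to be the preliminary observation — checking that $\Cc$ genuinely has, and $T$ genuinely preserves, all the limits named in the axioms — since everything else is a mechanical transfer: the pullbacks are handled by (ii), the products are the special case of (ii) over the final object, and the mapping paths are built from these.
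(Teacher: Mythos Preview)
Your proof is correct and follows essentially the same approach as the paper, which merely states that (P$_1$) is trivial, (P$_2$) follows from (i), and the rest from (ii); you have simply expanded these one-line remarks into explicit verifications. The only point you singled out as delicate --- that $T$ preserves the relevant limits, in particular the final object and products --- is indeed implicit in the paper's terse argument as well and is handled by the same considerations you sketch.
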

\begin{proof}
Axiom (P$_1$) is trivial. Axiom (P$_2$) follows from (i). The rest follows from (ii).
\end{proof}

\subsection{Filtered differential graded algebras}\label{fdgas}
Denote by $\Fdga{}{\kk}$ the category of filtered dga's over $\kk$.
The base field $\mathbf k$ is considered as a filtered
dga with the trivial filtration and the unit map $\eta:\kk\to A$ is filtered.
We will restrict to filtered dga's $(A,W)$ whose filtration is regular and exhaustive:
for each $n\geq 0$ there exists $q\in\ZZ$ such that $W_qA^n=0$, and $A=\cup_pW_pA$.

The spectral sequence associated with a filtered dga $A$ is compatible with the multiplicative structure. 
Hence for all $r\geq 0$, the term $E_r^{*,*}(A)$
is a bigraded dga with differential $d_r$ of bidegree $(r,1-r)$.
For the rest of this section we fix an integer $r\geq 0$. We adopt the following definition of \cite{HT} 
(see also \cite{CG1}).

\begin{defi}
A morphism of filtered dga's $f:A\to B$ is called \textit{$E_r$-quasi-isomorphism} (resp. \textit{$E_r$-surjection}) if
 $E_r(f):E_r(A)\to E_r(B)$ is a quasi-isomorphism (resp. surjective).
\end{defi}

\begin{nada}
Let $\Lambda(t,dt)$ be the free dga with generators $t$ and $dt$ of degree $0$ and $1$ respectively.
For $r\geq 0$, define an increasing filtration $\sigma[r]$ on $\Lambda(t,dt)$ by letting
$t$ be of weight $0$ and $dt$ of weight $-r$ and 
extending multiplicatively. Note that $\sigma[0]$ is the trivial filtration, and $\sigma[1]$ is the b\^{e}te filtration.
\end{nada}
\begin{defi}\label{rpath}
The \textit{$r$-path} $P_r(A)$ of a filtered dga $A$ is the dga $A\otimes\Lambda(t,dt)$ with the filtration defined by the convolution of
$W$ and $\sigma[r]$. We have:
$$W_pP_r(A)=\sum_q W_{p-q}A\otimes \sigma[r]_q\Lambda(t,dt)=(W_{p}A\otimes\Lambda(t))\oplus (W_{p+r}A\otimes\Lambda(t)dt).$$
\end{defi}

\begin{prop}\label{rpstruc_fdga}
The category $\Fdga{}{\kk}$ together with the classes
$\Ff_r=\{\text{$E_r$-surjections}\}$ and $\Ee_r=\{\text{$E_r$-quasi-isomorphisms}\}$
and the path $P_r(A)$ is a P-category.
\end{prop}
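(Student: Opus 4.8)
The plan is to verify the axioms (P$_1$)--(P$_5$) directly, reusing as much as possible the structure already established for the plain dga category in Proposition \ref{dga_pcat}. First I would observe that $\Fdga{}{\kk}$ has finite products (level-wise products with the product filtration) and a final object $0$, and that the $r$-path $P_r$ of Definition \ref{rpath} is a functorial path: the structural maps $\iota_A=1_A\otimes 1$, $\delta^k_A$ and the transformations $\tau,\mu,c,\nabla$ are the same underlying maps as in Section \ref{secciodgas}, and one only needs to check they are filtered for the convolution filtration of $W$ with $\sigma[r]$. This is immediate from the formula $W_pP_r(A)=(W_pA\otimes\Lambda(t))\oplus(W_{p+r}A\otimes\Lambda(t)dt)$: for instance $t$ has $\sigma[r]$-weight $0$ and $dt$ has weight $-r$, so $t\mapsto 1-t$, $t\mapsto s$, $t\mapsto ts$, $t\mapsto t$, $s\mapsto t$ all preserve the weights, while $\iota$ and $\delta^k$ clearly do. So the comonad, symmetry, interchange and folding identities hold because they hold at the level of the underlying dga's.

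Next I would dispatch (P$_1$)--(P$_4$). Axiom (P$_1$) is formal: $\Ff_r$ and $\Ee_r$ are defined by applying the exact functor $E_r$ and asking for surjections, resp.\ quasi-isomorphisms, in the category of bigraded dga's, so they contain isomorphisms, are closed under composition, and $\Ee_r$ has two-out-of-three; and $A\to 0$ is an $E_r$-surjection. For (P$_2$) the key computation is that $E_r(P_r(A))\cong E_r(A)\otimes E_r(\Lambda(t,dt),\sigma[r])$, and $E_r(\Lambda(t,dt),\sigma[r])$ is quasi-isomorphic to $\kk$ (this is exactly the point where the weight on $dt$ is chosen to be $-r$: it makes $dt$ survive to the $E_r$ page and $d_r(t)=dt$, killing the cohomology), so $\iota_A$ is an $E_r$-quasi-isomorphism; and $(\delta^0_A,\delta^1_A)$ together with each $\delta^k_A$ is an $E_r$-surjection, with $\delta^k_A$ an $E_r$-quasi-isomorphism by two-out-of-three. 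For (P$_3$): given $A\xrightarrow{u}C\twoheadleftarrow B$ with $v$ an $E_r$-surjection, the fibre product $A\times_C B$ is computed degree-wise with the induced filtration; since $E_r$ is exact and $E_r(v)$ surjective, $E_r(A\times_C B)=E_r(A)\times_{E_r(C)}E_r(B)$ and the stability of $E_r$-surjections and (for $u$ an $E_r$-quasi-isomorphism) $E_r$-quasi-isomorphisms under base change follows from the corresponding fact for plain dga's applied to the $E_r$ page. Axiom (P$_4$) is again the tensor formula: $P_r$ preserves $E_r$-surjections and $E_r$-quasi-isomorphisms because $-\otimes(\Lambda(t,dt),\sigma[r])$ does on $E_r$, and $P_r(A\times_C B)=P_r(A)\times_{P_r(C)}P_r(B)$ since both sides are $(A\times_C B)\otimes\Lambda(t,dt)$ with the convolution filtration.

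The main obstacle is (P$_5$), the cubical homotopy lifting property, exactly as in Proposition \ref{dga_pcat}. Here I would imitate the surjectivity argument given there: for an $E_r$-surjection $v:A\twoheadrightarrow B$ one has
$$\Pp(v,v)=\{(a_0,a_1,b(t))\in A\times A\times P_r(B);\ b(i)=v(a_i)\},$$
filtered by $W_p\Pp(v,v)=\{(a_0,a_1,b(t)): a_i\in W_pA,\ b(t)\in W_pP_r(B)\}$, and $\ov v(a(t))=(a(0),a(1),(v\otimes 1)(a(t)))$. I must show $E_r(\ov v)$ is surjective; unlike the plain case one cannot simply lift an arbitrary element of $\Pp(v,v)$ along $v$ with control of filtration, so instead I would work on the $E_r$ page, where $E_r(v)$ is surjective and $E_r(P_r(B))=E_r(B)\otimes E_r\Lambda(t,dt)$, and run the explicit formula $a(t)=(a_0-\wt b(0))(1-t)+(a_1-\wt b(1))t+\wt b(t)$ with $\wt b$ a filtered lift of a representative, checking that the weight bounds $a_i-\wt b(i)\in W_pA$ and $\wt b\in W_pP_r(B)$ propagate through $1-t$ and $t$ (both of $\sigma[r]$-weight $0$) so that $a(t)\in W_pP_r(A)$ and hence the lift is filtered; surjectivity of $E_r(\ov v)$ then reduces to surjectivity of $E_r(v\otimes 1)=E_r(v)\otimes\mathrm{id}$, which holds. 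Since an $E_r$-surjection whose source/target tensor decomposition is respected is automatically an $E_r$-surjection, $\ov v\in\Ff_r$ and (P$_5$) holds. Alternatively, and more cleanly, I expect this can be deduced from Lemma \ref{transfer_pcat} by transferring the P-category structure of $\dga{}{\kk}$ (or rather of bigraded dga's) along the functor $E_r$, once one checks $E_r$ commutes with $P_r$, $\iota$, $\delta^k$ and with the relevant fibre products; that is the route I would present first, falling back on the direct verification of (P$_5$) only if the transfer hypotheses (ii) of Lemma \ref{transfer_pcat} turn out to require the explicit lift above anyway.
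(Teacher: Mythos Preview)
The paper takes exactly the route you call ``alternative, and more cleanly'': it applies the transfer Lemma~\ref{transfer_pcat} to the functor $E_r:\Fdga{}{\kk}\to\dga{}{\kk}$, checking (i) via the isomorphism $E_r(P_r(A))\cong E_r(A)\otimes\Lambda(t,dt)$ and (ii) via a strictness argument. Your proposal correctly anticipates this, but misses the one nontrivial ingredient, and this gap also invalidates parts of your direct verification.

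The issue is your claim that ``$E_r$ is exact''. It is not: $E_r$ (already $E_0=Gr_W$) does not in general commute with kernels of filtered maps. What makes hypothesis (ii) of Lemma~\ref{transfer_pcat} go through is that an $E_r$-surjection $v$ is automatically \emph{strictly} compatible with the filtration (the paper invokes Proposition~1.1.11 of Deligne, Hodge~II), so that $u-v$ is strict and therefore $E_r\,\Ker(u-v)=\Ker\,E_r(u-v)$, i.e.\ $E_r(A\times_CB)=E_r(A)\times_{E_r(C)}E_r(B)$. Without this, your (P$_3$) argument has no content, and your (P$_5$) argument is circular: your ``filtered lift $\wt b$'' need not exist (an $E_r$-surjection is not $W_p$-surjective in general), and working instead on the $E_r$ page requires knowing $E_r(\Pp(v,v))=\Pp(E_r(v),E_r(v))$, which is precisely the fibre-product compatibility you have not established. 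Once the strictness step is in place, the transfer lemma handles all five axioms at once and your separate treatment of (P$_5$) becomes unnecessary.
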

\begin{proof}
We show that the functor
$E_r:\Fdga{}{\kk}\to \dga{}{\kk}$
satisfies the conditions of Lemma $\ref{transfer_pcat}$.
The isomorphism $E_r(P_r(A))\cong E_r(A)\otimes\Lambda(t,dt)$
gives the compatibility of $E_r$ with the $r$-path.
Consider filtered morphisms $(A,F)\stackrel{u}{\to}(C,F)\stackrel{v}{\twoheadleftarrow}(B,F)$,
where $v$ is an $E_r$-fibration. 
Then
$$A\times_CB=\Ker\left(A\times B\stackrel{u-v}{\lra}C\right).$$
Since $E_r(v)$ is surjective, by Proposition 1.1.11 of \cite{DeHII}, the map $v$ is strictly compatible with filtrations
and hence $u-v$ is so. Therefore
$E_r\Ker(u-v)=\Ker E_r(u-v)$
and $E_r$ is compatible with fibre products.
The result follows from Lemma $\ref{transfer_pcat}$ together with Proposition $\ref{dga_pcat}$.
\end{proof}

For the applications to mixed Hodge theory we shall be interested in the P-category structure of $\Fdga{}{\kk}$ associated with
 $E_1$-quasi-isomorphisms.
Likewise, in the category $\FFdga{}{\kk}$ of bifiltered dga's we shall consider the P-category structure associated with the class of
$E_{1,0}$-quasi-isomorphisms.

\begin{defi}
A morphism $f:A\to B$ of bifiltered dga's is called 
\textit{$E_{1,0}$-quasi-isomorphism} (resp. \textit{$E_{1,0}$-surjection}) if for all $p\in\ZZ$,
$E_1(Gr^p_Ff,W)$ is a quasi-isomorphism (resp. surjective).
\end{defi}

\begin{defi}\label{r0path}
The \textit{$(1,0)$-path object} of a bifiltered dga $A$ is 
the bifiltered dga 
defined by 
$$W_pF^qP_{1,0}(A)=(W_{p}F^q A\otimes\Lambda(t))\oplus (W_{p+1} F^qA\otimes\Lambda(t)dt).$$
\end{defi}

\begin{prop}\label{rpstruc_ffdga}
The category $\FFdga{}{\kk}$ together with the classes
$\Ff_{1,0}=\{\text{$E_{1,0}$-surjections}\}$ and $\Ee_{1,0}=\{\text{$E_{1,0}$-quasi-isomorphisms}\}$
and the path $P_{1,0}(A)$ is a P-category.
\end{prop}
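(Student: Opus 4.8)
The plan is to mimic the proof of Proposition~\ref{rpstruc_fdga} by applying the transfer criterion of Lemma~\ref{transfer_pcat}, this time to a suitable functor out of $\FFdga{}{\kk}$. The natural candidate is the functor $E_1 Gr_F := \bigoplus_{p} E_1(Gr^p_F(-),W)$ sending a bifiltered dga $(A,W,F)$ to the (graded) dga $\bigoplus_p E_1(Gr^p_F A, W)$, with values in $\dga{}{\kk}$. By construction a morphism $f$ lies in $\Ee_{1,0}$ (resp.\ $\Ff_{1,0}$) precisely when $E_1 Gr_F(f)$ is a quasi-isomorphism (resp.\ surjection), so $\Ee_{1,0} = (E_1 Gr_F)^{-1}(\Ww)$ and $\Ff_{1,0}=(E_1 Gr_F)^{-1}(\Ff)$ with $\Ww,\Ff$ the weak equivalences and fibrations of the P-category $\dga{}{\kk}$ of Proposition~\ref{dga_pcat}. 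Since $\FFdga{}{\kk}$ has finite products, a final object $0$ and an initial object $\kk$, and inherits a functorial path with symmetry, interchange, coproduct and folding map from $\dga{}{\kk}$ by tensoring $\Lambda(t,dt)$ with the convolution bifiltration of Definition~\ref{r0path}, it remains to verify conditions (i) and (ii) of Lemma~\ref{transfer_pcat}.

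For condition (i) I would check the compatibility $E_1 Gr_F(P_{1,0}(A)) \cong E_1 Gr_F(A) \otimes \Lambda(t,dt)$, together with the identities $E_1 Gr_F(\iota_A)=\iota$, $E_1 Gr_F(\delta^k_A)=\delta^k$. This is a direct computation: by Definition~\ref{r0path} the bifiltration on $P_{1,0}(A)$ is the convolution of $(W,F)$ with $(\sigma[1],\sigma[0])$ on $\Lambda(t,dt)$, so $Gr^p_F P_{1,0}(A) = Gr^p_F A \otimes \Lambda(t,dt)$ with the filtration $W * \sigma[1]$, and then $E_1(-,W)$ of a tensor with $\Lambda(t,dt)$ filtered by the b\^{e}te filtration reproduces $E_1(Gr^p_F A, W)\otimes\Lambda(t,dt)$ exactly as in the unifiltered case; the structural maps are evidently sent to the structural maps of the path on $\dga{}{\kk}$.

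For condition (ii), given $(A,W,F)\xrightarrow{u}(C,W,F)\xleftarrow{v}(B,W,F)$ with $v$ an $E_{1,0}$-surjection, the fibre product is again $\Ker(A\times B \xrightarrow{u-v} C)$. The key point is strictness: since $E_1(Gr^p_F v, W)$ is surjective for every $p$, the morphism $v$ is strictly compatible with both filtrations $W$ and $F$ (and with $W$ on each $Gr_F$), by the appropriate version of Proposition~1.1.11 of \cite{DeHII} applied stepwise, first to $F$ and then to the induced $W$-filtration on the graded pieces. Strictness of $v$ makes $u-v$ strict as well, which gives the commutation $E_1 Gr_F \Ker(u-v) = \Ker E_1 Gr_F(u-v)$ and, similarly, the compatibility $P_{1,0}(A\times_C B) = P_{1,0}(A)\times_{P_{1,0}(C)} P_{1,0}(B)$, both filtrations being exact functors on strict short exact sequences. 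With (i) and (ii) in hand, Lemma~\ref{transfer_pcat} yields that $(\FFdga{}{\kk}, P_{1,0}, \Ff_{1,0}, \Ee_{1,0})$ is a P-category.

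The main obstacle is the bifiltered strictness argument in condition (ii): one must be careful that surjectivity of $E_1(Gr^p_F v, W)$ for all $p$ genuinely forces strict compatibility of $v$ with the pair $(W,F)$ and not merely with each filtration separately, so that taking $E_1$ of $Gr_F$ commutes with the kernel. I would isolate this as a short lemma (or cite the bifiltered strictness statement in \cite{DeHII} or \cite{CG1}) rather than re-deriving it; the remaining verifications are the same formal manipulations already carried out in the proof of Proposition~\ref{rpstruc_fdga}.
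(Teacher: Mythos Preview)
Your proposal is correct and follows exactly the approach the paper intends: the paper's own proof reads in its entirety ``The proof is analogous to that of Proposition~\ref{rpstruc_fdga}'', and your argument is precisely this analogy, applying the transfer criterion of Lemma~\ref{transfer_pcat} to the functor $\bigoplus_p E_1(Gr^p_F(-),W)$ in place of $E_r$. Your identification of the bifiltered strictness step as the one point requiring care is apt and is the only place where the bifiltered argument is not a verbatim repetition of the filtered one.
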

\begin{proof}
 The proof is analogous to that of Proposition $\ref{rpstruc_fdga}$.
\end{proof}

\section{Diagram Categories}
We study the homotopy theory of diagram categories.
It is quite straightforward, that if the vertex categories are endowed with compatible P-category structures,
there is a P-category structure on the diagram category defined level-wise.
However, in general, level-wise cofibrant models do not give cofibrant models of the diagram category.
In the context of Cartan-Eilenberg categories,
we show that if we consider a weaker class of strong equivalences than the one associated with the functorial path,
one obtains a Cartan-Eilenberg structure on the diagram category with level-wise weak equivalences,
which is still useful to describe the morphisms 
in the homotopy category in terms of certain homotopy classes of morphisms up to homotopy between level-wise cofibrant objects.

\subsection{Level-wise P-category structure}
\begin{defi}\label{catdiagrames}
Let
$\Cc:I\to\mathsf{Cat}$ be a functor from a small category $I$, to the category of categories $\mathsf{Cat}$.
For all $i\in I$, denote $\Cc_i:=\Cc(i)\in \mathsf{Cat}$, and $u_*:=\Cc(u)\in \mathsf{Fun}(\Cc_i,\Cc_j)$,
for all $u:i\to j$.
The \textit{category $\GCc$ of diagrams associated with the functor $\Cc$} is defined as follows:
\begin{enumerate}[$\bullet$]
\item An object $A$ of $\GCc$ is given by a family of objects $\{A_i\in \Cc_i\}$, for all $i\in I$,
together with a family of morphisms $\varphi_u:u_*(A_i)\to A_j$, called \textit{comparison morphisms}, for every map $u:i\to j$.
Such an object is denoted as
$$A=\left(A_i\stackrel{\varphi_u}{\dashrightarrow}A_j\right).$$
\item A morphism $f:A\to B$ of $\GCc$ is a family of morphisms $\{f_i:A_i\to B_i\}$ of $\Cc_i$,
for all $i\in I$, such that for every map $u:i\to j$ of $I$, the diagram
$$\xymatrix{
u_*(A_i)\ar[d]_{u_*(f_i)}\ar[r]^{\varphi_u}&A_j\ar[d]^{f_j}\\
u_*(B_i)\ar[r]^{\varphi_u}&B_j&
}$$
commutes in $\Cc_j$. Denote $f=(f_i):A\to B$.
\end{enumerate}
By an abuse of notation, we will omit the notation of the functors $u_*$ and write $A_i$ for $u_*(A_i)$ and $f_i$ for $u_*(f_i)$,
whenever there is no danger of confusion.
\end{defi}

\begin{rmk}The category of diagrams $\GCc$ associated with $\Cc$ is the category of sections of the projection functor $\pi:\int_I\Cc\to I$,
where $\int_I\Cc$ is the Grothendieck construction of $\Cc$ (see \cite{Th}). 
If $\Cc:I\to\mathsf{Cat}$ is the constant functor $i\mapsto\Cc$ and
$\Cc(u)$ is the identity functor of a category $\Cc$, for all $u:i\to j$, then
$\GCc=\Cc^I$ is the diagram category of objects of $\Cc$ under $I$.
\end{rmk}

\begin{nada}\label{indexcat}
We will restrict our study of diagram categories satisfying the following axioms:
\begin{enumerate}
 \item [(I$_1$)] The index category $I$ is finite and has a 
\textit{degree function} $|\cdot|:\text{Ob}(I)\lra \{0,1\}$ such that
$|i|<|j|$ for every non-identity morphism
$u:i\to j$ of $I$.
 \item [(I$_2$)]For all $i\in I$, the category
$\Cc_i$ is equipped with a functorial path $P$, together with two classes $\Ff_i$ and $\Ww_i$
such that $(\Cc_i,P,\Ff_i,\Ww_i)$ is a P-category.
\item [(I$_3$)]For all $u:i\to j$ the functor $u_*:\Cc_i\to \Cc_j$ preserves path objects, fibrations, weak equivalences and fibre products.
\end{enumerate}
\end{nada}
A category $I$ satisfying (I$_1$) is a particular case of a Reedy category for which $I^+=I$.
The main examples of such categories are given by finite zig-zags
$$
\xymatrix{
&\bullet&&\bullet&&\bullet\\
\bullet\ar[ur]&&\bullet\ar[ul]\ar[ur]&&\bullet\ar[ul]\ar[ur]
}\cdots\xymatrix{
\bullet&&\bullet\\
&\bullet\ar[ul]\ar[ur]&&\bullet\ar[ul]
}
$$
but other diagram shapes are admitted. For example:
$$
\xymatrix{
\bullet\\
\bullet\ar@/_1pc/[u]\ar@/^1pc/[u]
}\hspace{2cm}
\xymatrix{
\bullet&\bullet&\bullet\\
&\ar[ul]\ar[u]\ar[ur]\bullet
}\hspace{1.5cm}
\xymatrix{
&\bullet\\
\bullet\ar@/_1pc/[ur]\ar@/^1pc/[ur]& &\bullet\ar[ul]
}
$$
All objects at the bottom of the diagrams have degree 0, and the ones at the top have degree 1.

\begin{defi}
A morphism $f:A\to B$ in $\GCc$ is called \textit{weak equivalence} (resp. \textit{fibration}) if for all $i\in I$, the maps $f_i$ are weak equivalences
 (resp. fibrations) of $\Cc_i$. Denote by $\Ww$ (resp. $\Ff$) the class of weak equivalences (resp. fibrations) of the diagram category $\GCc$.
\end{defi}
\begin{defi}
The \textit{path object} $P(A)$ of a diagram $A$ of $\GCc$ is the diagram defined by
$$P(A)=\left(P(A_i)\stackrel{P(\varphi_u)}{\dashrightarrow}P(Aj)\right).$$
\end{defi}
There are natural morphisms of diagrams
$$
\xymatrix{
A&P(A)\ar[l]_{\delta^0_A}\ar[r]^{\delta^1_A}&A\\
&A\ar[u]^{\iota_A}\ar@{=}[ul]_=\ar@{=}[ur]^=&,
}
$$
defined level-wise.
This defines a functorial path on $\GCc$.

Let $A\stackrel{u}{\to}C\stackrel{v}{\leftarrow}B$ be a diagram of $\GCc$,
and assume that for all $i\in I$, the fibre product $A_i\times_{C_i}B_i$ exists.
Then the fibre product $A\times_CB$ is given level-wise by
$(A\times_BC)_i=A_i\times_{B_i}C_i.$
The comparison morphism $\psi_u:(A\times_CB)_i\to (A\times_CB)_j$ is given by
$\psi_u=(\varphi_u^A\pi_1,\varphi_u^B\pi_2)$, where $\varphi_u^A$ and $\varphi_u^B$ denote the comparison morphisms of $A$ and $B$ respectively.
\begin{prop}\label{diags_BCE}Let $\GCc$ be a diagram category satisfying conditions
{\normalfont(I$_1$)-(I$_3$)} of $\ref{indexcat}$.
Then $\GCc$ is a P-category with path objects, fibrations, weak equivalences and fibre products defined level-wise.
\end{prop}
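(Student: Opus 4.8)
The plan is to reduce every axiom to its level-wise counterpart in the vertex categories $\Cc_i$, using condition (I$_3$) to ensure that the auxiliary objects occurring in the axioms (path objects, fibre products, double mapping paths) are genuinely objects of $\GCc$ and are computed pointwise. First I would fix the level-wise data. The final object is $e=(e_i)$ with the unique comparison morphisms; finite products are $(A\times B)_i=A_i\times B_i$; the functorial path is $P(A)=(P(A_i)\stackrel{P(\varphi_u)}{\dashrightarrow}P(A_j))$, which is well defined because (I$_3$) gives $u_*P=Pu_*$ compatibly with $\iota$ and $\delta^k$; and for a diagram $A\stackrel{u}{\to}C\stackrel{v}{\leftarrow}B$ with $v\in\Ff$, the fibre product is $(A\times_C B)_i=A_i\times_{C_i}B_i$, which exists by (P$_3$) in $\Cc_i$ and assembles into a section of $\int_I\Cc\to I$ precisely because $u_*$ preserves fibre products along fibrations by (I$_3$). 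The structural transformations $\iota_A,\delta^k_A,\tau_A,\mu_A,c_A,\nabla_A$ are taken componentwise; each is a morphism of diagrams by naturality of the corresponding transformation in $\Cc_i$ together with the compatibility of $u_*$ with the path structure. One then checks, pointwise, that these products and fibre products really are limits in $\GCc$.

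Next I would record that all the identities relating $\iota,\delta^k,\tau,\mu,c,\nabla$ — the comonad axioms of Definition \ref{coproducte}, the symmetry, interchange and folding relations — hold in $\GCc$: two parallel morphisms of diagrams are equal iff they are equal in every $\Cc_i$, so each identity follows from the one already known in $\Cc_i$. Thus $\GCc$ is a category with finite products, a final object, and a functorial path with symmetry, interchange, coproduct and folding map.

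It then remains to verify (P$_1$)–(P$_5$). By definition, a morphism of $\GCc$ lies in $\Ff$ (resp. $\Ww$) iff each of its components does; hence the closure properties and the two-out-of-three property in (P$_1$), as well as the assertions of (P$_2$), follow at once from the P-category axioms in the $\Cc_i$. For (P$_4$), that $P$ preserves fibrations and weak equivalences is immediate from the pointwise description of $P(f)$, and the identity $P(A\times_C B)=PA\times_{PC}PB$ holds level-wise by (P$_4$) in $\Cc_i$, with the two descriptions of the comparison morphisms agreeing by naturality. For (P$_3$), the existence of $A\times_C B$ along a fibration and the facts that $\pi_1$ is a (trivial) fibration and that $\pi_2$ is a weak equivalence when $u$ is — these all hold componentwise, the projections in $\GCc$ being the evident morphisms of diagrams with components the projections in $\Cc_i$. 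Finally, for (P$_5$): given $v:A\twoheadrightarrow B$ in $\Ff$, the double mapping path $\Pp(v,v)$ is the fibre product of $(\delta^0_B,\delta^1_B):P(B)\twoheadrightarrow B\times B$ along $v\times v$, hence a diagram computed pointwise with $\Pp(v,v)_i=\Pp(v_i,v_i)$; the induced map $\ov v:P(A)\to\Pp(v,v)$ is then the morphism of diagrams whose $i$-th component is $\ov{v_i}$, which is a fibration by (P$_5$) in $\Cc_i$, so $\ov v\in\Ff$.

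The argument is essentially bookkeeping, with no serious obstacle. The one point that needs genuine care is the verification that the diagram-level constructions appearing in the axioms — the path object $P(A)$, fibre products along fibrations, and with them the double mapping path $\Pp(v,v)$ of (P$_5$) — are well-defined objects of $\GCc$, i.e.\ carry coherent comparison morphisms, and coincide with their pointwise analogues; this is exactly what condition (I$_3$) is designed to supply. Once this is in place, every P-category axiom for $\GCc$ follows formally from the same axiom in each $\Cc_i$.
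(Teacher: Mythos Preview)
Your proposal is correct and is in substance the same argument the paper gives: the paper's proof is the one-line observation that the forgetful functor $\GCc\to\Pi_{i\in I}\Cc_i$ induced by $I_{\mathrm{dis}}\hookrightarrow I$ satisfies the hypotheses of the transfer Lemma~\ref{transfer_pcat}, which amounts exactly to the level-wise verification you have written out. The only difference is packaging --- you unfold by hand what Lemma~\ref{transfer_pcat} does in one stroke.
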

\begin{proof}The functor $\GCc\to\Pi_{i\in I} \Cc_i$ induced by the inclusion $I_{dis}\to I$
satisfies the conditions of Lemma $\ref{transfer_pcat}$.
\end{proof}

Let $\Ss$ denote the class of homotopy equivalences of $\GCc$.
If $f=(f_i)$ is in $\Ss$, then $f_i\in \Ss_i$. In particular, since $\Ss_i\subset \Ww_i$ we have $\Ss\subset\Ww$.
Hence the triple $(\GCc,\Ss,\Ww)$
is a category with strong and weak equivalences. Our objective is to study the homotopy category $\Ho(\GCc):=\GCc[\Ww^{-1}]$.
Note that in general, this differs from the category of diagrams $\Gamma\Ho(\Cc)$ associated with the level-wise localized categories.

\subsection{Morphisms up to homotopy}
The characterization and existence of cofibrant models of a diagram category $\GCc$
involves a rectification of homotopy commutative morphisms.
We solve this problem by studying the factorization
of morphisms commuting up to fixed homotopies into the composition
 of morphisms in a certain localized category  $\GCc[\Hh^{-1}]$, with $\Ss\subset\Hh\subset\Ww$.
The following is a simple example illustrating the procedure that we will conduct.
\begin{example}[Model of a morphism of dga's]\label{model_morfisme_dga}
Consider a diagram of morphisms of dga's
$$\xymatrix{\ar@{=>}[rd]^F
\ar[d]_{f_0}A_0\ar[r]^{\varphi}&A_1\ar[d]^{f_1}\\
B_0\ar[r]^{\varphi}&B_1\\
}$$
where $F:A_0\to B_1[t,dt]$ is a homotopy from $f_1\varphi$ to $\varphi f_0$.
Consider the mapping path
$$\Pp(f_i)=\{(a,b(t))\in A_i\times B_i[t,dt]; f_i(a)=b(0)\},\,i=0,1$$
and define morphisms $p_i:\Pp(f_i)\to A_i$ and $q_i:\Pp(f_i)\to B_i$ by letting 
$p_i(a,b(t))=a$, and
$q_i(a,b(t))=b(i)$. The maps $q_i$ and $p_i$ are quasi-isomorphisms of dga's, for $i=0,1$.
Define a morphism $\psi:\Pp(f_0)\to \Pp(f_1)$ by letting
$\psi(a,b(t))=(\varphi(a),F(a))$. Then the diagram
$$
\xymatrix{
A_0\ar[r]^{\varphi}&A_1\\
\ar[u]^{p_0}\ar[d]_{q_0}\Pp(f_0)\ar[r]^{\psi}&\Pp(f_1)\ar[u]_{p_1}\ar[d]^{q_1}\\
B_0\ar[r]^{\varphi}&B_1
}
$$
commutes. The key point of this construction
resides in the 
definition of the morphism $\psi$ (which depends on the homotopy $F$, and only on the first variable), 
and the morphisms $q_i$ (whose definition depends on whether the index $i$ is a source or a target in the index category).
Note also that the morphisms $p_i$ are homotopy equivalences.
\end{example}

\begin{defi}\label{homorfismes}
A \textit{ho-morphism} $f:A\hto B$ between two objects of $\GCc$ is pair of families $f=(f_i,F_u)$ indexed by $i\in I$ and $u\in I(i,j)$, where:
\begin{enumerate}[(i)]
 \item  $f_i:A_i\to B_i$ is a morphism in $\Cc_i$, and
\item  $F_u: A_i\to P(B_j)$ is a morphism in $\Cc_j$ satisfying
$\delta^0_{B_j}F_u=f_j\varphi_u$ and $\delta^1_{B_j}F_u=\varphi_uf_i$.
Hence $F_u$ is a homotopy of morphisms of $\Cc_j$ making the diagram 
$$
\xymatrix{
A_i\ar@{=>}[dr]^F\ar[d]_{f_i}\ar[r]^{\varphi_u}&A_j\ar[d]^{f_j}\\
B_i\ar[r]_{\varphi_u}&B_j
}
$$
commute up to homotopy.
\end{enumerate}
\end{defi}
Denote by $\GCch(A,B)$ the set of ho-morphisms from $A$ to $B$.
Every morphism $f=(f_i):A\to B$ can be made into a ho-morphism $f=(f_i,F_u):A\hto B$ by letting $F_u=\iota_{B_j} f_j\varphi_u=\iota_{B_j} \varphi_uf_i$.
This defines an inclusion of sets
$$\GCc(A,B)\subset \GCch(A,B).$$

The composition of ho-morphisms is not well defined. This is due to the fact that the homotopy 
relation between objects of $\Cc_i$ is not transitive in general.
However, we can compose ho-morphisms with morphisms. The following is straightforward.
\begin{lem}\label{compo_senzilles}
Let $f:A\hto B$ be a ho-morphism and let $g:A'\to A$ and $h:B\to B'$ be morphisms of $\GCc$.
There are ho-morphisms $fg:A'\hto B$ and $hf:A\hto B'$,
given by
$$fg=(f_ig_i,F_ug_i),\text{ and }hf=(h_if_i,P(h_j)F_u).$$
If $f$ is a morphism, then $fg$ and $hf$ coincide with the standard composition of morphisms of $\GCc$.
\end{lem}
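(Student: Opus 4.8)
The statement to prove (Lemma~\ref{compo_senzilles}) asserts that post-composing and pre-composing a ho-morphism with an honest morphism again yields a ho-morphism, with the explicit formulas $fg=(f_ig_i,F_ug_i)$ and $hf=(h_if_i,P(h_j)F_u)$, and that these agree with the usual composition when $f$ is a morphism. The plan is purely a matter of unwinding Definition~\ref{homorfismes} and checking the two compatibility identities $\delta^0_{B_j}F'_u=f'_j\varphi_u$ and $\delta^1_{B_j}F'_u=\varphi_uf'_i$ for the candidate homotopies $F'_u$ in each of the two cases, using naturality of the structural transformations $\delta^0,\delta^1,\iota$ of the functorial path.

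First I would treat pre-composition $fg:A'\hto B$. Here $g=(g_i):A'\to B$... rather, $g:A'\to A$ is a morphism of $\GCc$, so its components satisfy $\varphi^A_u g_i=g_j\varphi^{A'}_u$ for every $u:i\to j$. Set $F'_u:=F_u g_i:A'_i\to P(B_j)$. Then $\delta^0_{B_j}F'_u=\delta^0_{B_j}F_u g_i=f_j\varphi^A_u g_i=f_j g_j\varphi^{A'}_u=(fg)_j\varphi^{A'}_u$, using first the defining property of $F_u$ and then the commutativity square for $g$; similarly $\delta^1_{B_j}F'_u=\varphi^A_u f_i g_i=\varphi^A_u(fg)_i$, where now I need that $g$ being a morphism gives $f_i g_i$ as the correct $i$-th component, and the comparison morphism of $A$ composed with this equals what is required (in the abuse-of-notation convention, $\varphi^A_u f_i g_i$ lands in $A_j$ appropriately). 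So $fg=(f_ig_i,F_ug_i)$ is a ho-morphism.

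Next, post-composition $hf:A\hto B'$ with $h=(h_i):B\to B'$. Set $F'_u:=P(h_j)F_u:A_i\to P(B'_j)$. By naturality of $\delta^k$, we have $\delta^k_{B'_j}P(h_j)=h_j\delta^k_{B_j}$, hence $\delta^0_{B'_j}F'_u=h_j\delta^0_{B_j}F_u=h_j f_j\varphi^B_u$, and since $h$ is a morphism $h_j\varphi^B_u=\varphi^{B'}_u h_i$ (after the functorial-path bookkeeping), giving $\delta^0_{B'_j}F'_u=(hf)_j\varphi^B_u$... I should write this as $\varphi^{B'}_u h_i f_i$ is not what I want; rather $\delta^0$ should yield $(hf)_j\varphi_u = h_j f_j\varphi^B_u$ directly, which is what I have. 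Likewise $\delta^1_{B'_j}F'_u=h_j\delta^1_{B_j}F_u=h_j\varphi^B_u f_i=\varphi^{B'}_u h_i f_i=\varphi^{B'}_u(hf)_i$. Thus $hf=(h_if_i,P(h_j)F_u)$ is a ho-morphism. Finally, the compatibility with the inclusion $\GCc(A,B)\subset\GCch(A,B)$: if $f$ is a morphism, its associated ho-morphism has $F_u=\iota_{B_j}f_j\varphi_u$, and plugging into the two formulas gives $F_ug_i=\iota_{B_j}f_j\varphi_u g_i=\iota_{B_j}(fg)_j\varphi_u$ and $P(h_j)F_u=P(h_j)\iota_{B_j}f_j\varphi_u=\iota_{B'_j}h_jf_j\varphi_u=\iota_{B'_j}(hf)_j\varphi_u$, using naturality of $\iota$; these are exactly the ho-morphisms associated with the ordinary composites $fg$ and $hf$ of $\GCc$. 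There is no real obstacle here — the only thing demanding a little care is keeping the abuse of notation (suppressing the functors $u_*$) consistent so that each claimed equality type-checks; the content is entirely naturality of $P$, $\iota$, $\delta^0$, $\delta^1$.
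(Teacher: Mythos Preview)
Your proof is correct and is exactly the straightforward verification the paper has in mind: the paper states the lemma with the remark ``the following is straightforward'' and gives no proof. The only minor slip is notational---in the $\delta^0$ check for $hf$ you momentarily write $\varphi^B_u$ where the comparison morphism of $A$ is meant---but you immediately self-correct, and the argument (naturality of $\delta^k$ and $\iota$, plus the commutativity squares for the morphisms $g$ and $h$) is the right one.
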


\begin{defi}\label{hohomotopies}
Let $f,g:A\hto B$ be two ho-morphisms. A \textit{homotopy from $f$ to $g$} is a ho-morphism $h:A\hto P(B)$
such that $\delta^0_Bh=f$ and $\delta^1_Bh=g$.
We use the notation $h:f\simeq g.$ 
\end{defi}
Equivalently, such a homotopy is given by a family of pairs $h=(h_i,H_u)$ where:
\begin{enumerate}[(i)]
 \item  $h_i:A_i\to P(B_i)$ satisfies $\delta^0_{B_i}h_i=f_i$ and $\delta^1_{B_i}h_i=g_i$, i.e., $h_i$ is a homotopy from $f_i$ to $g_i$.
\item  $H_u:A_i\to P^2(B_j)$ is a morphism in $\Cc_j$ satisfying
$$
\left\{\begin{array}{l}
P(\delta^{0}_{B_j}) H_u=F_u,\vspace{.2cm}\\
P(\delta^{1}_{B_j})H_u=G_u,
\end{array}\right.\text{ and }
\left\{\begin{array}{l}
\delta^{0}_{P(B_j)}H_u=h_j\varphi_u,\vspace{.2cm}\\
\delta^{1}_{P(B_j)}H_u=\psi_uh_i
\end{array}\right..
$$
\end{enumerate}

\begin{defi}\label{hoequivalencies}
A morphism $f:A\to B$ of $\GCc$ is said to be a \textit{ho-equivalence} if there exists a ho-morphism $g:B\hto A$ together with chains of homotopies of
ho-morphisms $gf\simeq\cdots \simeq 1_A$ and $fg\simeq \cdots \simeq 1_B$.
\end{defi}
Denote by $\Hh$ the closure by composition of the class of ho-equivalences. 
\begin{lem}
We have $\Ss\subset\Hh\subset \overline\Ww$. In particular, the triple $(\GCc,\Hh,\Ww)$ is a category with strong and weak equivalences.
\end{lem}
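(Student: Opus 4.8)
The statement has two parts: $\Ss\subset\Hh$ and $\Hh\subset\overline\Ww$; the last sentence (that $(\GCc,\Hh,\Ww)$ is a category with strong and weak equivalences) is then formal, since $\Hh$ is closed by composition by definition, $\Hh\subset\overline\Ww$ is exactly the condition relating strong to weak equivalences, and $\overline\Ww$ is saturated. So the real content is the two inclusions.

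For $\Ss\subset\Hh$: a homotopy equivalence $f:A\to B$ of $\GCc$ comes with $g:B\to A$ and chains of ordinary homotopies $fg\sim 1_B$, $gf\sim 1_A$ in $\GCc$. Viewing $g$ as a ho-morphism via the canonical inclusion $\GCc(B,A)\subset\GCch(B,A)$, I must promote these chains of homotopies of \emph{morphisms} to chains of homotopies of \emph{ho-morphisms} in the sense of Definition~\ref{hohomotopies}. The point is that if $h_i:f_i\simeq g_i$ is a homotopy in $\Cc_i$ realising one step of the chain at level $i$, I need to supply the second-order data $H_u:A_i\to P^2(B_j)$ with the four boundary conditions. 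When all the ho-morphisms in sight are honest morphisms, the relevant $F_u,G_u$ are the degenerate homotopies $\iota_{B_j}f_j\varphi_u$, etc., so the constraints on $H_u$ become: $P(\delta^0_{B_j})H_u=\iota_{B_j}f_j\varphi_u$, $P(\delta^1_{B_j})H_u=\iota_{B_j}g_j\varphi_u$, $\delta^0_{P(B_j)}H_u=h_j\varphi_u$, $\delta^1_{P(B_j)}H_u=\varphi_u h_i$. A natural candidate is to build $H_u$ from the interchange $\mu$ and the images of $h_i,h_j$ under the path functor, using the commutativity $\varphi_u h_i = P(\varphi_u)h_i$ and $u_*$-naturality; concretely one wants $H_u$ such that its two ``outer'' faces (applying $P(\delta^k)$) are degenerate homotopies and its two ``inner'' faces (applying $\delta^k_{P}$) are $h_j\varphi_u$ and $\varphi_u h_i$. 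Since $h$ is a genuine homotopy of morphisms of $\GCc$, the square $\delta^0_{B_j}h_j\varphi_u=f_j\varphi_u=\varphi_uf_i=\delta^0_{B_i}\cdots$ etc. commutes strictly, and $P(h_j)\circ(\text{something})$ combined with $\mu$ delivers the desired $H_u$; I would take $H_u := \mu_{B_j}\circ P(\text{lift}) $ or, more simply, $H_u := \iota_{P(B_j)}\circ(h_j\varphi_u)$ composed appropriately — the key is that this is a routine diagram chase using $P^4$-axioms (interchange, naturality) and I expect it to go through because the homotopy $h$ already ``knows'' the compatibility with $\varphi_u$. The chain structure then transports step by step, so $f$ is a ho-equivalence, hence $f\in\Hh$.

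For $\Hh\subset\overline\Ww$: since $\overline\Ww$ is closed under composition it suffices to show each ho-equivalence $f:A\to B$ lies in $\overline\Ww$; since $\overline\Ww$ is the saturation (two-out-of-six / closure under the usual localisation identities), and $f$ has $g:B\hto A$ with $gf$ and $fg$ chain-homotopic to identities, I want to deduce $f\in\overline\Ww$ levelwise. Here is the mechanism: for each $i$, the component chains $g_if_i\simeq\cdots\simeq 1_{A_i}$ and $f_ig_i\simeq\cdots\simeq 1_{B_i}$ (obtained by projecting the ho-homotopies to level $i$ via the $h_i$ part of Definition~\ref{hohomotopies}) exhibit $f_i$ as a homotopy equivalence in $\Cc_i$, hence $f_i\in\Ss_i\subset\Ww_i$. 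Therefore $f$ is a levelwise weak equivalence, i.e. $f\in\Ww\subset\overline\Ww$. So in fact every ho-equivalence is already a genuine weak equivalence of $\GCc$, and $\Hh\subset\Ww\subset\overline\Ww$. The last clause of the statement is then immediate.

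\textbf{Main obstacle.} The delicate point is the first inclusion: verifying that a homotopy of morphisms of $\GCc$ can be upgraded to a homotopy of ho-morphisms, i.e. producing the second-order cells $H_u:A_i\to P^2(B_j)$ with all four prescribed faces. This is a ``filling'' problem for a square of homotopies and is exactly where the interchange $\mu$, coproduct $c$, and naturality axioms of a $P$-category are used; while conceptually routine, writing down an explicit $H_u$ and checking the four identities $P(\delta^k_{B_j})H_u=\cdots$, $\delta^k_{P(B_j)}H_u=\cdots$ for $k=0,1$ is the one spot requiring genuine care. Everything else — closure under composition, the levelwise implication $\Ss_i\subset\Ww_i$, and the final ``category with strong and weak equivalences'' conclusion — is bookkeeping.
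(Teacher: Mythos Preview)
Your argument for $\Hh\subset\overline\Ww$ is exactly the paper's: project a ho-equivalence to each vertex to see $f_i\in\Ss_i\subset\overline\Ww_i$, hence $f\in\overline\Ww$ levelwise.

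For $\Ss\subset\Hh$, however, you are making the problem much harder than it is. A homotopy of morphisms of $\GCc$ is by definition a \emph{morphism} $h:A\to P(B)$ in $\GCc$. But a homotopy of ho-morphisms (Definition~\ref{hohomotopies}) is by definition a \emph{ho-morphism} $h:A\hto P(B)$. Since there is a canonical inclusion $\GCc(A,P(B))\subset\GCch(A,P(B))$, every homotopy of morphisms is \emph{already} a homotopy of ho-morphisms; there is no filling problem. Concretely, the trivial ho-structure on the morphism $h$ is $H_u=\iota_{P(B_j)}h_j\varphi_u$ (precisely the formula you wrote down and then hedged on), and the four boundary identities are immediate: naturality of $\iota$ gives $P(\delta^k_{B_j})\iota_{P(B_j)}=\iota_{B_j}\delta^k_{B_j}$, so $P(\delta^k_{B_j})H_u=\iota_{B_j}(\delta^k_{B_j}h_j)\varphi_u$ equals the degenerate $F_u$ or $G_u$; and $\delta^k_{P(B_j)}\iota_{P(B_j)}=1$ gives $\delta^0_{P(B_j)}H_u=h_j\varphi_u$ and $\delta^1_{P(B_j)}H_u=h_j\varphi_u=P(\varphi_u)h_i$, the last equality being exactly the statement that $h$ is a genuine morphism of diagrams. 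No interchange, no coproduct, no lifting is needed. The paper's proof of this inclusion is one sentence: ``If $f$ and $g$ are homotopic morphisms of $\GCc$, then they are also homotopic as ho-morphisms.''

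So your plan would work, but the ``main obstacle'' you flag is not an obstacle at all.
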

\begin{proof}
If $f$ and $g$ are homotopic morphisms of $\GCc$, then they are also homotopic as ho-morphisms. Therefore $\Ss\subset \Hh$.
If $f$ is a ho-equivalence, then
$f_i$ is a morphism of $\Ss_i$, for all $i\in I$. Since $\Ss_i\subset \overline\Ww_i$, it follows that $\Hh\subset\overline\Ww$.
\end{proof}

\subsection{Factorization of ho-morphisms}
Our next objective is to prove a Brown-type Factorization Lemma for ho-morphisms.

\begin{defi}
Let $f:A\hto B$ be a ho-morphism. The \textit{mapping path of $f$}
is the diagram
$$\Pp^h(f)=\left(\Pp(f_i)\stackrel{\psi_u}{\dashrightarrow}\Pp(f_j)\right),$$
where $\Pp(f_i)$ is the mapping path of $f_i$  
and the comparison morphism $\psi_u:\Pp(f_i)\to \Pp(f_j)$ is defined by
$\psi_u=(\varphi_u,F_u)\pi_1$ via the pull-back diagram
$$
\xymatrix{
\Pp(f_i)\ar[r]^{\pi_1}&A_i \ar@/_/[ddr]_{\varphi_u} \ar@/^/[drr]^{F_u} \ar@{.>}[dr]\\
&& \pb\Pp(f_j) \ar[d]^{\pi_1} \ar[r]_{\pi_2} & P(B_j) \ar[d]^{\delta^0_{B_j}}\\
&& A_j \ar[r]_{f_j} & B_j&.
}
$$
\end{defi}
\begin{rmk}
Since $\GCc$ is a P-category, every morphism $f:A\to B$ of $\GCc$ has a mapping path $\Pp(f)$.
We can consider $f$ as a ho-morphism, by letting $F_u=\iota f_j\varphi_u$, so that it has an associated mapping path 
$\Pp^h(f)$. The comparison morphisms of $\Pp(f)$ and $\Pp^h(f)$ do not coincide, but are only homotopic.
\end{rmk}

\begin{nada}
For $i\in I$, consider the first projection maps $p_{i}:=\pi_1:\Pp(f_i)\to A_i$.
For $u:i\to j$ we have
$$p_{j}\psi_u=\pi_1(\varphi_u,F_u)\pi_1=\varphi_u\pi_1=\varphi_up_{i}.$$
Therefore the family $p=(p_{i}):\Pp^h(f)\to A$ is a morphism of $\GCc$.

For $i\in I$, let ${q}_i:=\delta^{|i|}_{B_i}\pi_2:\Pp(f_i)\to B_i$, 
where $|i|\in\{0,1\}$ is the degree of $i$ (see condition (I$_1$) of $\ref{indexcat}$).
For $u:i\to j$ we have
$$q_{j}\psi_u=\delta^1_{B_j}\pi_2(\varphi_u,F_u)\pi_1=\delta^1_{B_j}F_u\pi_1=\varphi_uf_i\pi_1=\varphi_u\delta^0_{B_j}\pi_2=\varphi_uq_{i}.$$
Therefore the family $q=({q}_i):\Pp^h(f)\to B$ is a morphism of $\GCc$.

Note that $q$ is not defined level-wise via the Factorization Lemma $\ref{factoritza_brown}$
in which $q=\delta^1_B\pi_2$, but instead, we alternate between $\delta^0_B\pi_2$ and $\delta^1_B\pi_2$, depending on the degree of the index.
This needs to be done in order to obtain a morphism. As a result, $q$
is not necessarily a level-wise fibration.

For $i\in I$, let $\iota_{i}=(1_{A_i},\iota_{B_i}f_i):A_i\to \Pp(f_i)$. Then
$\psi_u\iota_{i}=(\varphi_u,F_u)$ and $\iota_{j}\varphi_u=(\varphi_u,\iota_{A_j}f_j\varphi_u).$
We next define a homotopy
from $\psi_u\iota_{i}$ to $\iota_{j}\varphi_u$.
Let $J_{u}$ be the morphism defined by the following pull-back diagram:
$$
\xymatrix{
&A_i \ar@/_/[ddr]_{\iota_{A_j}\varphi_u} \ar@/^/[drr]^{c_{B_j}F_u} \ar@{.>}[dr]|-{J_{u}}\\
&& \pb P(\Pp(f_j)) \ar[d] \ar[r] & P^2(B_j) \ar[d]^{P(\delta^0_{B_j})}\\
&& P(A_j) \ar[r]_{P(f_j)} & P(B_j)&.
}
$$
The coproduct (see Definition $\ref{coproducte}$) satisfies
$P(\delta^0_{B_j})c_{B_j}=\iota_{B_j}\delta^0_{B_j}$. Hence
the above solid diagram commutes and the map $J_{u}$ is well defined.

By (P$_4$), the pull-back $P(\Pp(f_j))$ is a path object of $\Pp(f_j)$, with
$$\delta^k_{\Pp(f_j)}=(\delta^k_{A_j}P(\pi_1),\delta^k_{P(B_j)}P(\pi_2)),\text{ for }k=0,1.$$
Therefore we have
$$\delta^k_{\Pp(f_j)}J_{u}=(\delta^k_{A_j}\iota_{A_j}\varphi_u,\delta^k_{P(B_j)}c^0_{B_j}F_u)=
(\varphi_u,\delta^k_{P(B_j)}c^0_{B_j}F_u).$$
Since $\delta^0_{P(B_j)}c^0_{B_j}=\iota_{B_j}\delta^0_{B_j}$ and $\delta^1_{P(B_j)}c^0_{B_j}=1_{B_j}$, it follows that
$$
\delta^0_{\Pp(f_j)}J_{u}=(\varphi_u,\iota_{B_j}\delta^0_{B_j}F_u)=\iota_{j}\varphi_u\text{ and }
\delta^1_{\Pp(f_j)}J_{u}=(\varphi_u,F_u)=\psi_u\iota_{i}.
$$
Therefore the family $\iota=(\iota_{i},J_{u}):A\hto\Pp^h(f)$ is a ho-morphism.
\end{nada}

\begin{prop}\label{inversaho}
Let $f:A\hto B$ be a ho-morphism. The diagram
$$\xymatrix{
A&\Pp^h(f)\ar[l]_{p}\ar[r]^{q}&B\\
&A\ar@{=}[lu]_=\ar@{~>}[ur]_f^=\ar@{~>}[u]^{\iota}
}$$
commutes. In addition:
\begin{enumerate}[(1)]
\item The maps $p$ and $\iota$ are weak equivalences.
\item  There is a homotopy of ho-morphisms between $\iota p$ and the identity, 
making $p$ into a ho-equivalence.
\item  If $f$ is a weak equivalence, then $q$ is a weak equivalence.
\end{enumerate}
\end{prop}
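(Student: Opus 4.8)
The plan is to check the three assertions one vertex at a time, using that $\Pp^h(f)_i=\Pp(f_i)$ is the ordinary mapping path of $f_i$ in the P-category $\Cc_i$ (so Lemma~\ref{factoritza_brown} applies at each vertex), and then supplying by hand the extra homotopy data needed along the comparison morphisms. Commutativity of the triangle is a level-wise computation: $p_i\iota_i=\pi_1(1_{A_i},\iota_{B_i}f_i)=1_{A_i}$ and $q_i\iota_i=\delta^{|i|}_{B_i}\pi_2(1_{A_i},\iota_{B_i}f_i)=f_i$; moreover $q\iota=f$ as ho-morphisms, since by Lemma~\ref{compo_senzilles} the homotopy component of $q\iota$ is $P(q_j)J_u=P(\delta^{1}_{B_j})c_{B_j}F_u$, which equals $F_u$ by the comonad identity $P(\delta^1)c=1$ of Definition~\ref{coproducte} (note $|j|=1$ by~(I$_1$)).

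For (1): $p=(p_i)$ is a level-wise trivial fibration by Lemma~\ref{factoritza_brown}(1), hence $p\in\Ww$; each $\iota_i$ is a homotopy equivalence with inverse $p_i$ by Lemma~\ref{factoritza_brown}(2), and since $p_i\iota_i=1_{A_i}$ with $p_i\in\Ww$, the two out of three property of $\Ww$ gives $\iota_i\in\Ww$. For (3): from the identity $q_i\iota_i=f_i$ together with $\iota_i\in\Ww$ and the hypothesis $f_i\in\Ww$, two out of three gives $q_i\in\Ww$ (alternatively, $q_i=\delta^{|i|}_{B_i}\pi_2$ is a composite of weak equivalences by (P$_2$) and (P$_3$)).

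The substantial point is (2): constructing a homotopy of ho-morphisms $K=(k_i,K_u):\Pp^h(f)\hto P(\Pp^h(f))$ with $\delta^0K=\iota p$ and $\delta^1K=1_{\Pp^h(f)}$. On vertices take $k_i=(\iota_{A_i}\pi_1,\,c_{B_i}\pi_2):\Pp(f_i)\to P(\Pp(f_i))$, the homotopy from $\iota_ip_i$ to $1_{\Pp(f_i)}$ produced in the proof of Lemma~\ref{factoritza_brown}(2). For the higher datum $K_u:\Pp(f_i)\to P^2(\Pp(f_j))$, use that (P$_4$) identifies $P^2(\Pp(f_j))$ with $P^2(A_j)\times_{P^2(B_j)}P^3(B_j)$, so it suffices to give compatible maps into the two factors: the doubly constant square $\iota_{P(A_j)}\iota_{A_j}\varphi_u\pi_1$ into the first, and $\hat c_{B_j}F_u\pi_1$ into the second, where $\hat c:P\to P^3$ is the transformation of Lemma~\ref{coproduct2} (the abstract shadow of $(t,s,l)\mapsto t(s+l-sl)$). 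Compatibility of the two legs is exactly identity~(iii) of Lemma~\ref{coproduct2}, $P^2(\delta^0)\hat c=\iota_{P}\iota\delta^0$, combined with $\delta^0_{B_j}F_u=f_j\varphi_u$.

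It then remains to verify the four boundary equations of Definition~\ref{hohomotopies} for $K_u$, namely $P(\delta^0_{\Pp(f_j)})K_u=J_up_i$, $P(\delta^1_{\Pp(f_j)})K_u=\iota_{\Pp(f_j)}\psi_u$, $\delta^0_{P(\Pp(f_j))}K_u=k_j\psi_u$, $\delta^1_{P(\Pp(f_j))}K_u=P(\psi_u)k_i$. On the $A$-factor these follow at once from $\delta^k\iota=1$ and naturality of $\iota$; on the $B$-factor they reduce to identities~(i)--(ii) of Lemma~\ref{coproduct2} ($\delta^0_{P^2}\hat c=P(\delta^0_{P})\hat c=c$ and $\delta^1_{P^2}\hat c=P(\delta^1_{P})\hat c=\iota_{P}$), the naturality of $\iota$, $\delta^k$, $c$, and the defining pullback for $J_u$. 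Finally $p\iota=1_A$ by a direct computation from Lemma~\ref{compo_senzilles} (since $P(p_j)J_u=\iota_{A_j}\varphi_u$), so $K$ together with this equality exhibits $p$ as a ho-equivalence. I expect the bookkeeping of this last step to be the main obstacle: the single map $K_u$ must simultaneously match four prescribed maps on the four faces of a homotopy square, and organizing this is precisely what the calculus of the transformation $\hat c$ is designed to handle.
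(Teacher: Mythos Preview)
Your proposal is correct and follows essentially the same approach as the paper: the level-wise homotopy $k_i=(\iota_{A_i}\pi_1,c_{B_i}\pi_2)$ and the second homotopy $K_u=(\iota_{P(A_j)}\iota_{A_j}\varphi_u\pi_1,\hat c_{B_j}F_u\pi_1)$ built from the transformation $\hat c$ of Lemma~\ref{coproduct2} are exactly the maps the paper constructs, and your verification of the four boundary conditions via identities (i)--(iii) of that lemma reproduces the paper's argument. Your additional explicit checks that $q\iota=f$ and $p\iota=1_A$ hold as ho-morphisms (via $P(\delta^1_{B_j})c_{B_j}=1$ and $P(\pi_1)J_u=\iota_{A_j}\varphi_u$) are a nice complement to the paper's terse ``straightforward from the definitions''.
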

\begin{proof}
From the definitions it is straightforward that the above diagram
commutes.

Let us prove (1). From axiom $(P_3)$, the map $p$ is a weak equivalence.
By the two out of three property, it follows that $\iota$ is also a weak equivalence.

To prove (2) we define a homotopy between
$\iota p=(\iota_{i}p_{i},J_{u}p_{i})$
and $1_{\Pp(f_i)}$ as follows.

For all $i\in I$, let $h_i:\Pp(f_i)\to P(\Pp(f_i))$ be the morphism of $\Cc_i$ defined by
$h_i=(\iota_{A_i}\pi_1,c^0_{B_i}\pi_2)$.
This is a homotopy from $\iota_ip_i$ to the identity morphism (see the proof of Lemma $\ref{factoritza_brown}$).

Let $\wt H_u$ be the morphism defined by the following pull-back diagram:
$$
\xymatrix{
A_i \ar@/_/[ddr]_{\iota_{P(A_j)}\iota_{A_j}\varphi_u} \ar@/^/[drr]^{\hat{c}_{B_j}F_u}\ar@{.>}[dr]|-{\wt H_u}\\
& \pb P^2(\Pp(f_j)) \ar[d]^{} \ar[r]_{} & P^3(B_j) \ar[d]^{P^2(\delta^0_{B_j})}\\
& P^2(A_j) \ar[r]_{P^2(f_j)} & P^2(B_j)&,
}
$$
where $\hat{c}$ is the transformation of Lemma $\ref{coproduct2}$ and satisfies
$P^2(\delta^0_{B_j})\hat{c}_{B_j}=\iota_{P(B_j)}\iota_{B_j}\delta^0_{B_j}$. The above solid diagram
commutes and the map $\wt H_u$ is well defined.

Let
$H_u:=\wt H_u\pi_1:\Pp(f_i)\to P^2(\Pp(f_j)).$
By (P$_4$), the fibre product $P^2(\Pp(f_j))$ is a double path object of $\Pp(f_j)$.
From the properties of $\hat{c}$ we have:
$$\left\{\begin{array}{lll}
\delta^0_{P(\Pp(f_j))}H_u=h_j\psi_u&P(\delta^0_{\Pp(f_j)})H_u=J_{F_u}p_{f_i}\\
\delta^1_{P(\Pp(f_j))}H_u=P(\psi_u) h_i&P(\delta^1_{\Pp(f_j)})H_u=\iota_{\Pp(f_j)}(\psi_u)
  \end{array}\right..
$$
Therefore the family $h=(h_i,H_u)$ is a homotopy from $\iota p$ to $1$.

Let us prove (3). Assume that $f$ is a weak equivalence. By $(i)$, the map $\iota$ is a weak equivalence.
By the two out of three property, $q$ is a weak equivalence.
\end{proof}

\begin{lem}\label{morf_de_homorfs}
Let $f:A\hto B$ and $g:A\hto C$ be two ho-morphisms and $\alpha:A\to C$ a morphism of $\GCc$
making the diagram
$$
\xymatrix{
&\ar@{~>}[dr]^gA\ar@{~>}[dl]_f&\\
B\ar[rr]^{\alpha}&&C
}
$$
commute. There is a morphism
$(\alpha)_*:\Pp^h(f)\lra\Pp^h(g)$
compatible with $p$, $q$, and $\iota$.
\end{lem}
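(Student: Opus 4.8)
The plan is to define $(\alpha)_*$ level-wise by the universal properties of the pull-backs that define the mapping paths, and then to check compatibility with the comparison morphisms and with $p$, $q$, $\iota$ by pushing $\alpha$ through the structure transformations of the functorial path, all of which are natural. First observe that, by Lemma~\ref{compo_senzilles}, the hypothesis that $\alpha f=g$ as ho-morphisms unpacks to $\alpha_if_i=g_i$ for every $i\in I$ and $P(\alpha_j)F_u=G_u$ for every $u\colon i\to j$. For each $i$ I would then define $(\alpha)_{*,i}\colon\Pp(f_i)\to\Pp(g_i)$ through the universal property of the pull-back $\Pp(g_i)$, with the two legs $\pi_1\colon\Pp(f_i)\to A_i$ and $P(\alpha_i)\pi_2\colon\Pp(f_i)\to P(C_i)$; these agree over $C_i$ because $f_i\pi_1=\delta^0_{B_i}\pi_2$ holds on $\Pp(f_i)$, $g_i=\alpha_if_i$, and $\delta^0_{C_i}P(\alpha_i)=\alpha_i\delta^0_{B_i}$ by naturality of $\delta^0$. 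Thus $\pi_1(\alpha)_{*,i}=\pi_1$ and $\pi_2(\alpha)_{*,i}=P(\alpha_i)\pi_2$.

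To see that the family $(\alpha)_*=((\alpha)_{*,i})$ is a morphism of $\GCc$ I would compare the two projections of $(\alpha)_{*,j}\psi_u$ and of $\psi_u(\alpha)_{*,i}$ into $A_j$ and into $P(C_j)$: on the first factor both equal $\varphi_u\pi_1$, and on the second factor the former equals $P(\alpha_j)F_u\pi_1$ while the latter equals $G_u\pi_1$, so the two coincide precisely because $P(\alpha_j)F_u=G_u$. This is the only place where the commutativity of the triangle of ho-morphisms is genuinely used. The compatibility with $p$ is then immediate from $\pi_1(\alpha)_{*,i}=\pi_1$, and the compatibility with $q$, in the form $q^g\circ(\alpha)_*=\alpha\circ q^f$ (superscripts indicating the mapping path), holds level-wise since $q^g_i(\alpha)_{*,i}=\delta^{|i|}_{C_i}P(\alpha_i)\pi_2=\alpha_i\delta^{|i|}_{B_i}\pi_2=\alpha_iq^f_i$ by naturality of $\delta^{|i|}$.

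For the compatibility with $\iota$ one has to show $(\alpha)_*\circ\iota^f=\iota^g$ as ho-morphisms, i.e.\ (using Lemma~\ref{compo_senzilles} once more) that $(\alpha)_{*,i}\iota^f_i=\iota^g_i$ and $P((\alpha)_{*,j})J^f_u=J^g_u$. The first reduces to $P(\alpha_i)\iota_{B_i}f_i=\iota_{C_i}\alpha_if_i=\iota_{C_i}g_i$, i.e.\ naturality of $\iota$. For the second I would use that, by (P$_4$), $P(\Pp(g_j))$ is the pull-back $P(A_j)\times_{P(C_j)}P^2(C_j)$, so it suffices to match the two legs of $J^g_u$: one has $P(\pi_1)\,P((\alpha)_{*,j})\,J^f_u=P(\pi_1)J^f_u=\iota_{A_j}\varphi_u$, while $P(\pi_2)\,P((\alpha)_{*,j})\,J^f_u=P^2(\alpha_j)\,c_{B_j}F_u=c_{C_j}\,P(\alpha_j)F_u=c_{C_j}G_u$ by naturality of the coproduct $c$ together with $P(\alpha_j)F_u=G_u$. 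I expect this last identification to be the main obstacle, since it requires interpreting $P$ applied to a map that is itself characterised by a universal property of a pull-back and then combining this with the naturality of $c$; the remaining verifications are routine diagram chases with the pull-backs and the structure maps of the path.
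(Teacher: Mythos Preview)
Your proposal is correct and follows essentially the same approach as the paper: define $(\alpha)_{*,i}$ via the universal property of the pull-back $\Pp(g_i)$ with legs $\pi_1$ and $P(\alpha_i)\pi_2$, then verify compatibility with the comparison morphisms and with $p$, $q$, $\iota$. The paper is terser, recording only the defining pull-back diagram and asserting that ``a simple verification'' gives the compatibility square; you have carried out precisely that verification, including the ho-morphism component $P((\alpha)_{*,j})J^f_u=J^g_u$ via naturality of the coproduct $c$, which the paper leaves implicit.
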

\begin{proof}
Let $(\alpha_i)_*$ be the morphism defined by the pull-back diagram:
$$
\xymatrix{
&\Pp(f_i) \ar@/_/[ddr]_{\pi_1} \ar@/^/[drr]^{P(\alpha_i)\pi_2} \ar@{.>}[dr]|-{(\alpha_i)_*}\\
&& \pb\Pp(g_i) \ar[d]^{\pi_1} \ar[r]^{\pi_2} & P(C_i) \ar[d]^{\delta^0_{C_i}}\\
&& A_i \ar[r]^{g_i} & C_i&.
}
$$

Since $\varphi_u\alpha_i=\alpha_j\varphi_u$ and $P(\alpha_j)F_u=G_u$, 
the family $(\alpha)_*:=(\alpha_i)_*$ is a morphism of diagrams.
A simple verification shows that the following diagram commutes.
$$
\xymatrix{
\Pp^h(f)\ar[d]^{(\alpha)_*}&A\ar@{=}[d]\ar@{~>}[l]_-{\iota}&\Pp^h(f)\ar[d]^{(\alpha)_*}\ar[r]^{q}\ar[l]_{p}&B\ar[d]^\alpha\\
\Pp^h(g)&A\ar@{~>}[l]_-{\iota}&\Pp^h(g)\ar[r]^{q}\ar[l]_{p}&C
}
$$
\end{proof}

\begin{nada}[Factorization map]\label{def_phi}Given two objects $A,B$ of $\GCc$ define a map
$$\Phi_{A,B}:\GCch(A,B)\lra \GCc[\Hh^{-1}](A,B)$$
as follows.
Let $f:A\hto B$ be a ho-morphism. By
 Proposition $\ref{inversaho}$
we have $f=q_f\circ \iota_f$, where $q_f$ is a morphism of $\GCc$ and
$\iota_f$ is a homotopy inverse for a ho-equivalence $p_f$.
We then let
$$\Phi_{A,B}(f)=q_f\circ p_f^{-1}.$$
\end{nada}

We next prove some useful properties of the factorization map.

\begin{lem}\label{p1}
If $f:A\to B$ is a morphism of $\GCc$ then $\Phi_{A,B}(f)=f$ in $\GCc[\Hh^{-1}]$.
\end{lem}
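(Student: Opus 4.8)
The plan is to unwind the definition of $\Phi_{A,B}$ on a genuine morphism $f:A\to B$ and check that the resulting formal composition $q_f\circ p_f^{-1}$ already equals $f$ in $\GCc[\Hh^{-1}]$. First I would recall that when $f$ is a morphism of $\GCc$, we regard it as a ho-morphism by setting $F_u=\iota_{B_j}f_j\varphi_u=\iota_{B_j}\varphi_uf_i$; with this choice the mapping path $\Pp^h(f)$, the projections $p_f=(p_i)$ with $p_i=\pi_1$, the maps $q_f=(q_i)$ with $q_i=\delta^{|i|}_{B_i}\pi_2$, and the ho-morphism $\iota_f=(\iota_i,J_u)$ are all available from the construction in \ref{def_phi} and the preceding paragraphs. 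By Proposition \ref{inversaho}, one has $f=q_f\circ\iota_f$ as ho-morphisms, $p_f\iota_f=1_A$, and $p_f$ is a ho-equivalence with homotopy inverse $\iota_f$.

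Next I would argue directly in the localized category. Since $p_f$ lies in $\Hh$, it becomes invertible in $\GCc[\Hh^{-1}]$, and $\Phi_{A,B}(f)=q_f\circ p_f^{-1}$ by definition. The key identity is $q_f\circ p_f = f$ \emph{as a genuine morphism of $\GCc$}: level-wise this reads $q_i\circ p_i = \delta^{|i|}_{B_i}\pi_2\circ(1_{A_i},\iota_{B_i}f_i)$, and from $\delta^{|i|}_{B_i}\iota_{B_i}=1_{B_i}$ this is exactly $f_i$, for either value of $|i|\in\{0,1\}$ permitted by (I$_1$). Hence $q_f\circ p_f=f$ in $\GCc$, so also in $\GCc[\Hh^{-1}]$, and therefore
$$\Phi_{A,B}(f)=q_f\circ p_f^{-1}=(q_f\circ p_f)\circ p_f^{-1}\circ p_f^{-1}$$
is not quite the shape I want; instead I would use $q_f\circ p_f^{-1}=(q_f\circ p_f)\circ(p_f\circ p_f)^{-1}$ only if $p_f$ were idempotent, which it is not. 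The clean route is: from $f=q_f\circ p_f$ in $\GCc$ and $p_f$ invertible in the localization, multiply on the right by $p_f^{-1}$ to get $f\circ p_f^{-1}=q_f$, hence $q_f\circ p_f^{-1}=f\circ p_f^{-1}\circ p_f^{-1}$; this still has the wrong form, so in fact the correct computation is simply $q_f\circ p_f^{-1}=(f\circ p_f)\circ p_f^{-1}$? No — rather, from $f\,p_f^{-1}\cdot p_f=f$ one reads off nothing new. The honest identity to use is $q_f=f\circ p_f$, whence $\Phi_{A,B}(f)=q_f\circ p_f^{-1}=f\circ p_f\circ p_f^{-1}=f$ in $\GCc[\Hh^{-1}]$.

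The only subtlety, and hence the main point to get right, is the verification of the \emph{strict} (not merely up-to-homotopy) equality $q_f\circ p_f=f$ in $\GCc$ when $f$ is a morphism promoted to a ho-morphism via $F_u=\iota_{B_j}f_j\varphi_u$: one must check this holds at every vertex for both possible degrees $|i|=0$ and $|i|=1$, using only $\delta^0_{B_i}\iota_{B_i}=\delta^1_{B_i}\iota_{B_i}=1_{B_i}$ from Definition \ref{funct_path}, and that the comparison morphisms match so that $q_f\circ p_f$ is genuinely a morphism of $\GCc$ equal to $f$ (which is immediate since $q_i p_i=f_i$ and $f$ was a morphism to begin with). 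Everything else is formal manipulation in the localized category using that $p_f\in\Ss\subset\Hh$.
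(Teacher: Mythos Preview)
Your proposal contains a genuine error: you have confused the projection $p_f$ with the section $\iota_f$. Recall that $p_i=\pi_1:\Pp(f_i)\to A_i$, whereas $(1_{A_i},\iota_{B_i}f_i):A_i\to\Pp(f_i)$ is $\iota_i$, not $p_i$. Thus the composition ``$q_i\circ p_i$'' you write does not even type-check (both $p_i$ and $q_i$ have source $\Pp(f_i)$), and what you actually compute, namely $\delta^{|i|}_{B_i}\pi_2\circ(1_{A_i},\iota_{B_i}f_i)=f_i$, is the identity $q_i\iota_i=f_i$.

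The ``honest identity'' you then settle on, $q_f=f\circ p_f$, is well-typed but \emph{false} at vertices of degree $1$. Indeed, by the defining pullback of $\Pp(f_i)$ one has $\delta^0_{B_i}\pi_2=f_i\pi_1$, so $q_i=f_ip_i$ holds when $|i|=0$; but for $|i|=1$ one has $q_i=\delta^1_{B_i}\pi_2$, which differs from $f_ip_i=\delta^0_{B_i}\pi_2$ on any non-constant path in $P(B_i)$. Hence $q_f=f\circ p_f$ does not hold in $\GCc$, and the final manipulation $q_f\circ p_f^{-1}=f\circ p_f\circ p_f^{-1}=f$ collapses.

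The correct route (and the paper's) is precisely the computation you stumbled into but mislabelled: when $f$ is a genuine morphism, so that $F_u=\iota_{B_j}f_j\varphi_u$, one checks $\psi_u\iota_i=(\varphi_u,F_u)=(\varphi_u,\iota_{B_j}f_j\varphi_u)=\iota_j\varphi_u$, hence $\iota_f$ is itself a morphism of $\GCc$. Then the diagram of Proposition~\ref{inversaho} commutes in $\GCc$ with $p_f\iota_f=1_A$ and $q_f\iota_f=f$; since $p_f\in\Hh$ is invertible in $\GCc[\Hh^{-1}]$, the relation $p_f\iota_f=1_A$ forces $\iota_f=p_f^{-1}$ there, and therefore $\Phi_{A,B}(f)=q_f\circ p_f^{-1}=q_f\circ\iota_f=f$.
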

\begin{proof}
Since $f$ is a morphism, the map $\iota_f:A\to \Pp^h(f)$ is also a morphism and the diagram
$$\xymatrix{A&\ar[l]_{p_f}\Pp^h(f)\ar[r]^{q_f}&B\\
&A\ar[u]^{\iota_f}\ar[ur]^{=}_f\ar@{=}[ul]_=&}$$
commutes. Hence $q_f\circ p_f^{-1}=f$ in $\GCc[\Hh^{-1}]$.
\end{proof}

\begin{lem}\label{p2}
Let $f:A\hto B$ be a ho-morphism, and let $g:B\to C$ be a morphism of $\GCc$. Then $\Phi_{B,C}(g)\circ \Phi_{A,B}(f)=\Phi_{A,C}(gf)$.
\end{lem}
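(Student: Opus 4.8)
The goal is to show that for a ho-morphism $f:A\hto B$ and a genuine morphism $g:B\to C$ of $\GCc$, the factorization map satisfies $\Phi_{B,C}(g)\circ\Phi_{A,B}(f)=\Phi_{A,C}(gf)$ in $\GCc[\Hh^{-1}]$. Recall that by construction $\Phi_{A,B}(f)=q_f\circ p_f^{-1}$ and $\Phi_{B,C}(g)=q_g\circ p_g^{-1}$, and since $g$ is a morphism, Lemma $\ref{p1}$ gives $\Phi_{B,C}(g)=g$ in $\GCc[\Hh^{-1}]$. So the left-hand side equals $g\circ q_f\circ p_f^{-1}$, and the problem reduces to proving $g\circ q_f\circ p_f^{-1}=q_{gf}\circ p_{gf}^{-1}$ in $\GCc[\Hh^{-1}]$. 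The natural tool is Lemma $\ref{morf_de_homorfs}$: I would like to produce a morphism of diagrams $\Pp^h(f)\to\Pp^h(gf)$ compatible with the maps $p$, $q$ and $\iota$ on both sides.

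First I would set up the comparison triangle. By Lemma $\ref{compo_senzilles}$, the composite $gf:A\hto C$ is the ho-morphism $(g_if_i,\,P(g_j)F_u)$. Taking $\alpha=g$ in the hypothesis of Lemma $\ref{morf_de_homorfs}$, with the two ho-morphisms being $f:A\hto B$ and $gf:A\hto C$, the required triangle $g\circ f = gf$ commutes (this is exactly the definition of $hf$ in Lemma $\ref{compo_senzilles}$, read level-wise). Hence Lemma $\ref{morf_de_homorfs}$ furnishes a morphism $(g)_*:\Pp^h(f)\to\Pp^h(gf)$ of $\GCc$ fitting into the commutative diagram
$$
\xymatrix{
\Pp^h(f)\ar[d]_{(g)_*}&A\ar@{=}[d]\ar@{~>}[l]_-{\iota_f}&\Pp^h(f)\ar[d]^{(g)_*}\ar[r]^{q_f}\ar[l]_{p_f}&B\ar[d]^g\\
\Pp^h(gf)&A\ar@{~>}[l]_-{\iota_{gf}}&\Pp^h(gf)\ar[r]^{q_{gf}}\ar[l]_{p_{gf}}&C&.
}
$$

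From this diagram the conclusion is a short diagram chase in the localized category $\GCc[\Hh^{-1}]$. The left square gives $p_{gf}\circ(g)_* = p_f$; since $p_f$ and $p_{gf}$ are ho-equivalences by Proposition $\ref{inversaho}$, both become invertible in $\GCc[\Hh^{-1}]$, and the relation $(g)_*$-compatibility yields $(g)_*\circ p_f^{-1} = p_{gf}^{-1}$. The right square gives $q_{gf}\circ(g)_* = g\circ q_f$. Composing, $q_{gf}\circ p_{gf}^{-1} = q_{gf}\circ(g)_*\circ p_f^{-1} = g\circ q_f\circ p_f^{-1}$. Thus $\Phi_{A,C}(gf)=g\circ q_f\circ p_f^{-1} = \Phi_{B,C}(g)\circ\Phi_{A,B}(f)$, using Lemma $\ref{p1}$ for the last identification of $g$ with $\Phi_{B,C}(g)$.

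The only point requiring genuine care—and the main obstacle—is verifying that Lemma $\ref{morf_de_homorfs}$ genuinely applies, i.e.\ that $(g)_*$ is defined level-wise by a \emph{strictly} commuting diagram rather than one commuting only up to homotopy. This is where one uses that $g$ is an honest morphism of $\GCc$ (not merely a ho-morphism): the identities $\varphi_u g_i = g_j\varphi_u$ hold on the nose, and the ho-morphism structure on $gf$ is literally $P(g_j)F_u$, so the hypotheses $\varphi_u\alpha_i=\alpha_j\varphi_u$ and $P(\alpha_j)F_u=(\text{comparison of }gf)$ in the proof of Lemma $\ref{morf_de_homorfs}$ are satisfied verbatim with $\alpha=g$. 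Once this is checked, the rest is formal. One should also note that Lemma $\ref{p2}$ is the asymmetric case where the second map is a strict morphism; the symmetric situation (first map strict) and the general composability statement are presumably handled separately in the sequel.
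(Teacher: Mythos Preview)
Your proof is correct and follows essentially the same approach as the paper: apply Lemma~\ref{morf_de_homorfs} with $\alpha=g$ to obtain the comparison morphism $(g)_*:\Pp^h(f)\to\Pp^h(gf)$, then read off $g\circ q_f\circ p_f^{-1}=q_{gf}\circ p_{gf}^{-1}$ from the resulting commutative diagram and conclude via Lemma~\ref{p1}. The paper's argument is terser, but your added verification that the hypotheses of Lemma~\ref{morf_de_homorfs} hold on the nose (because $g$ is a strict morphism and $gf$ has homotopy data $P(g_j)F_u$) is a worthwhile clarification.
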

\begin{proof}
By Lemma $\ref{morf_de_homorfs}$ we have a commutative diagram
$$
\xymatrix{
&\ar[dl]_{p_f}\Pp^h(f)\ar[r]^{q_f}\ar[d]^{(g)_*}&B\ar[d]^g\ar[rd]^g&\\
A&\ar[r]^{q_{gf}}\Pp^h(gf)\ar[l]_{p_{gf}}&C\ar@{=}[r]&C
}
$$
Hence $g\circ q_f\circ p_f^{-1}=q_{gf}\circ p_{gf}^{-1}$ in $\GCc[\Hh^{-1}]$. The result follows from Lemma $\ref{p1}$.
\end{proof}

\begin{lem}\label{p3}
Let $f,g:A\hto B$ be two ho-morphisms. If $f\simeq g$ then $\Phi_{A,B}(f)=\Phi_{A,B}(g)$.
\end{lem}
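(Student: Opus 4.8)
The statement is a homotopy-invariance statement, and the plan is to deduce it from the two functoriality properties of $\Phi$ already proved: its triviality on morphisms (Lemma \ref{p1}) and its compatibility with post-composition by morphisms (Lemma \ref{p2}). Suppose $h:A\hto P(B)$ is a homotopy from $f$ to $g$. By Definition \ref{hohomotopies} this means exactly $\delta^0_B h=f$ and $\delta^1_B h=g$, where $\delta^0_B,\delta^1_B:P(B)\to B$ are the structure morphisms of the functorial path on $\GCc$ (defined level-wise) and the composites $\delta^k_B h$ are the post-compositions of a ho-morphism by a morphism described in Lemma \ref{compo_senzilles}. Applying Lemma \ref{p2} with the ho-morphism $h$ and the morphism $\delta^k_B$, and then Lemma \ref{p1} to replace $\Phi_{P(B),B}(\delta^k_B)$ by $\delta^k_B$, I obtain in $\GCc[\Hh^{-1}]$ the identities
$$\Phi_{A,B}(f)=\delta^0_B\circ\Phi_{A,P(B)}(h),\qquad \Phi_{A,B}(g)=\delta^1_B\circ\Phi_{A,P(B)}(h).$$
Thus the problem reduces to proving that $\delta^0_B=\delta^1_B$ as maps $P(B)\to B$ in $\GCc[\Hh^{-1}]$.

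For that reduced claim I would show that $\iota_B:B\to P(B)$ is a homotopy equivalence of $\GCc$, hence lies in $\Ss\subset\Hh$ and becomes invertible in $\GCc[\Hh^{-1}]$. Indeed $\delta^0_B\iota_B=1_B$, and the coproduct $c_B:P(B)\to P^2(B)$ of the path on $\GCc$ is, level-wise, precisely the homotopy from $\iota_{B_i}\delta^0_{B_i}$ to $1_{P(B_i)}$ furnished by Lemma \ref{coprhomo}; by naturality of $c$ the family $c_B=(c_{B_i})$ is a morphism of diagrams, so it is a homotopy $\iota_B\delta^0_B\simeq 1_{P(B)}$ in $\GCc$. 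Hence $\iota_B$ is invertible in $\GCc[\Hh^{-1}]$, and the equalities $\delta^0_B\iota_B=1_B=\delta^1_B\iota_B$ force $\delta^0_B=\iota_B^{-1}=\delta^1_B$ there. Substituting back yields $\Phi_{A,B}(f)=\delta^0_B\circ\Phi_{A,P(B)}(h)=\delta^1_B\circ\Phi_{A,P(B)}(h)=\Phi_{A,B}(g)$, which is the assertion.

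No serious obstacle arises; the only point requiring attention — and it is purely bookkeeping rather than computation — is to make sure that the ho-morphism produced by Lemma \ref{compo_senzilles} from post-composing $h$ with $\delta^k_B$ really is the given $f$ (resp. $g$), but this is literally the definition of a homotopy of ho-morphisms, so nothing new has to be checked. Finally, if one wants the conclusion for the transitive congruence $\sim$ generated by $\simeq$, i.e. for a chain of homotopies $f\simeq\cdots\simeq g$, it follows at once by transitivity of equality in $\GCc[\Hh^{-1}]$.
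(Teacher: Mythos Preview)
Your proof is correct and follows essentially the same route as the paper's. The paper constructs the comparison diagram between $\Pp^h(f)$, $\Pp^h(h)$, and $\Pp^h(g)$ directly from Lemma~\ref{morf_de_homorfs}, whereas you invoke Lemma~\ref{p2} (which packages that same construction) to reduce at once to the identity $\delta^0_B=\delta^1_B$ in $\GCc[\Hh^{-1}]$; both arguments then conclude by observing that $\iota_B\in\Ss\subset\Hh$.
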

\begin{proof}
Let $h:A\hto P(B)$ be a homotopy from $f$ to $g$.
By Lemma $\ref{morf_de_homorfs}$ we have a diagram
$$
\xymatrix{
A\ar@{=}[d]&\Pp^h(f)\ar[l]_{p_f}\ar[r]^{q_f}&B\ar@{=}[d]\\
A\ar@{=}[d]&\ar[u]^{(\delta_B^0)_*}\ar[d]_{(\delta_B^1)_*}\Pp^h(h)\ar[l]_{p_h}\ar[r]^{\delta_B^0q_h}&B\ar@{=}[d]\\
A&\Pp^h(g)\ar[l]_{p_g}\ar[r]^{q_g}&B\\
}
$$
where every square commutes in $\GCc$, except for the lower-right one, which commutes in $\GCc[\Ss^{-1}]$.
Since $\Ss\subset \Hh$, the localization $\delta:\GCc\to \GCc[\Hh^{-1}]$
factors through $\gamma:\GCc\to \GCc[\Ss^{-1}]$.
Therefore $q_f\circ p_f^{-1}=q_g\circ p_g^{-1}$ in $\GCc[\Hh^{-1}]$.
\end{proof}

\subsection{Homotopy classes of ho-morphisms}
Denote by $\GCc_{cof}$ the full subcategory of $\GCc$ of those diagrams
$C=\left(C_i\dashrightarrow C_j\right)$
such that $C_i$ is cofibrant in $\Cc_i$ for all $i\in I$.
We next show that homotopy is transitive for those ho-morphisms whose source is in $\GCc_{cof}$ and define a composition law between homotopy classes.

\begin{prop}\label{equivalenceia_hoho}Let $A$ be an object of $\GCc_{cof}$. For every object $B$ of $\GCc$,
the homotopy relation is an equivalence relation
on the set of ho-morphisms from $A$ to $B$.
\end{prop}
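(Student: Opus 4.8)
The plan is to reduce the statement for ho-morphisms to the analogous statement for ordinary morphisms, which is already available once we pass to the mapping path. Recall from Proposition~$\ref{relequiv}$ that the homotopy relation on ordinary morphisms out of a cofibrant object of a P-category is an equivalence relation; and from Proposition~$\ref{diags_BCE}$ that $\GCc$ is itself a P-category. So the core idea is: replace a ho-morphism $f:A\hto B$ by an honest morphism via the factorization $f=q_f\circ\iota_f$ of Proposition~$\ref{inversaho}$, transport homotopies of ho-morphisms back and forth across this factorization, and then invoke transitivity for morphisms in the P-category $\GCc$. Reflexivity is immediate (Lemma~$\ref{reflexiva}$, applied level-wise together with the trivial homotopy on the $F_u$), and symmetry should follow from the symmetry $\tau$ of the functorial path applied level-wise to both the $h_i$ and the $H_u$, exactly as in Lemma~$\ref{simetrica}$; the real content is transitivity, and that is where cofibrancy of $A$ enters.

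First I would fix a cofibrant diagram $A$ and ho-morphisms $f,g,f':A\hto B$ with homotopies $h:f\simeq g$ and $h':g\simeq f'$. Here $h=(h_i,H_u)$ and $h'=(h'_i,H'_u)$ in the sense of the description following Definition~$\ref{hohomotopies}$. The level-wise data $h_i$ and $h'_i$ are genuine homotopies in $\Cc_i$ between morphisms out of the cofibrant object $A_i$, so by the construction in the proof of Proposition~$\ref{relequiv}$ there is a concatenated homotopy $h_i\wt{+}h'_i=\nabla_{B_i}\Ll_i$, where $\Ll_i:A_i\to P^2(B_i)$ is a lift of $(h_i,h'_i)$ along the trivial fibration $\pi_{B_i}$ of Lemma~$\ref{profib}$. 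The task is to produce, compatibly, the second-level data: a family $(h_i\wt{+}h'_i,\,K_u)$ with $K_u:A_i\to P^2(B_j)$ satisfying the four identities of Definition~$\ref{hohomotopies}$ with respect to $F_u$ and $F'_u$. The natural way to get $K_u$ is to lift the pair $(H_u,H'_u)$ — suitably reorganized so that it becomes a compatible pair over $P^3(B_j)$ — along a trivial fibration built by applying the functor $P$ (which preserves trivial fibrations and fibre products by (P$_4$)) to $\pi_{B_j}$; here the cofibrancy of $A_i$ is again what gives the lift, and then one folds along $\nabla$ to land in $P^2(B_j)$. Checking that the resulting $K_u$ has the prescribed faces is a routine but bookkeeping-heavy computation using the defining relations of $\nabla$, $c$, and naturality, parallel to the end of the proof of Proposition~$\ref{relequiv}$.

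An alternative, and I think cleaner, route is to push everything through the factorization map. By Lemma~$\ref{p3}$, homotopic ho-morphisms have $\Phi_{A,B}(f)=\Phi_{A,B}(g)$; conversely one wants that if $\Phi_{A,B}(f)=\Phi_{A,B}(g)$ as maps in $\GCc[\Hh^{-1}]$, then $f\simeq g$ — this would make $\simeq$ the pullback of equality along $\Phi_{A,B}$ and hence transitive automatically. To establish this converse one shows that for a cofibrant source the map $\Phi_{A,B}$ identifies $\GCch(A,B)/\!\simeq$ with a set of homotopy classes of genuine morphisms between level-wise cofibrant diagrams, where Proposition~$\ref{relequiv}$ (applied in $\GCc$, using that $\Pp^h(f)$ has cofibrant vertices when $A$ does, since $\Pp(f_i)$ is a fibre product of cofibrant-mapped objects — one must check $\Pp(f_i)$ is cofibrant, which follows because $p_i:\Pp(f_i)\to A_i$ is a homotopy equivalence and $A_i$ is cofibrant) already gives an equivalence relation. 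The main obstacle I anticipate is precisely this cofibrancy bookkeeping for the mapping paths together with the verification of the face identities for the concatenated homotopy $K_u$: the second-level homotopy lives in $P^3$ and one must invoke Lemma~$\ref{coproduct2}$ and the comonad/folding identities carefully to see the four required boundary conditions hold. Once that verification is in place, transitivity of $\simeq$ on $\GCch(A,B)$ follows, and reflexivity and symmetry being already noted, the proof is complete.
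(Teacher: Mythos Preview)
Your first approach is exactly the paper's proof: build $h_i\wt{+}h_i'=\nabla_{B_i}\Ll_i$ level-wise, then lift to obtain $\Ll_u:A_i\to P^3(B_j)$ and set $H_u\wt{+}H_u':=P(\nabla_{B_j})\Ll_u$. One point you should make precise: the lift of $(H_u,H_u')$ along $P(\pi_{B_j})$ is not just a lift against a trivial fibration (Definition~\ref{defcof}) but a lift with \emph{prescribed endpoints} $(\Ll_j\varphi_u,\varphi_u\Ll_i)$, so you must invoke Lemma~\ref{holift} rather than bare cofibrancy. These endpoint conditions are exactly what guarantee the face identities $\delta^0_{P(B_j)}K_u=(h_j\wt{+}h_j')\varphi_u$ and $\delta^1_{P(B_j)}K_u=\varphi_u(h_i\wt{+}h_i')$; they are input to the lift, not something you can check afterwards. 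With that correction the argument goes through, and the remaining face identities $P(\delta^k_{B_j})K_u$ follow from $P(\delta^k)\nabla=\nabla P^2(\delta^k)$ and $P(\pi_{B_j})\Ll_u=(H_u,H_u')$.

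Your alternative route via $\Phi_{A,B}$ should be dropped: it is circular. The direction you need (that $\Phi_{A,B}(f)=\Phi_{A,B}(g)$ forces $f\simeq g$) is the content of Proposition~\ref{bijeccio_homorfs}, whose proof (and even the construction of the inverse map $\Psi$ in~\ref{defini_psi}) presupposes the composition law of Lemma~\ref{compohomos}, which in turn rests on the transitivity you are trying to prove. Also, your claim that $\Pp(f_i)$ is cofibrant because it is homotopy equivalent to the cofibrant $A_i$ is not justified in this framework: cofibrancy here is a strict lifting property and is not a priori invariant under homotopy equivalence.
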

\begin{proof}
Reflexivity and symmetry are trivial. We prove transitivity. Assume given ho-morphisms $f,f',f'':A\hto B$
together with homotopies $h:f\simeq f'$ and $h':f'\simeq f''$.
Since $A_i$ is cofibrant, for all $i\in I$,
a homotopy $h_i\wt{+}h_i':f_i\simeq f_i'$ is given as in the proof or Proposition
$\ref{relequiv}$ by $h_i\wt{+}h_i':=\nabla_{B_i}\Ll_i$, where $\Ll_i:A_i\to P^2(B_i)$ 
satisfies $\pi_{B_i}\Ll_i=(h_i,h_i')$ and $\pi_{B_i}=(\delta_{P(B_i)}^0,P(\delta_{B_i}^1))$.
Consider the commutative solid diagram:
$$
\xymatrix{
&A_i \ar@/_/[ddr]_{(H_u,H_u')} \ar@/^/[drrr]^{(\Ll_j\varphi_u,\varphi_u\Ll_i)} \ar@{.>}[dr]|-{\Ll_u}\\
&& P^3(B_j) \ar[d]^{P(\pi_{B_j})} \ar[rr]^-{} && P^2(B_j)\times P^2(B_j) \ar[d]^{\pi_{B_j}\times\pi_{B_j}}\\
&& P(\Pp(\delta^1_{B_j})) \ar[rr]^-{} && \Pp(\delta^1_{B_j})\times \Pp(\delta^1_{B_j})&.
}
$$
Since $\pi_{B_j}$ is a trivial fibration, by Lemma $\ref{holift}$
there exists a dotted arrow $\Ll_u$, making the diagram commute. Let $H_u\wt{+}H_u':=P(\nabla_{B_j})\Ll_u$.
Then the family $h\wt{+}h':=(h_i\wt{+}h_i',H_u\wt{+}H_u')$ is a homotopy of ho-morphisms from $f$ to $f''$.
\end{proof}

We will denote by
$[A,B]^h:=\GCch(A,B)/\sim$
the set of ho-morphisms from $A$ to $B$ modulo the equivalence 
relation transitively generated by the homotopy relation.

\begin{lem}
\label{compohomos}Let $A$ be an object of $\GCc_{cof}$. For every pair of objects $B, C$ of $\GCc$,
there is a map
$$
*:[A,B]^h\times [B,C]^h\lra [A,C]^h
$$
such that:
\begin{enumerate}[(1)]
\item  If either $g$ or $f$ are morphisms of $\GCc$, then $[g]*[f]=[gf]$, where $gf$ is the composition
defined in Lemma $\ref{compo_senzilles}$.
\item If $h$ is a morphism and $f,g$ are ho-morphisms, then
$$[h]*([g]*[f])=[hg]*[f].$$
\end{enumerate}
\end{lem}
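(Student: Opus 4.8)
The idea is to build the composition law on homotopy classes by pulling back along the factorization map $\Phi$. Given ho-morphisms $f:A\hto B$ and $g:B\hto C$, I would like to define $[g]*[f]$, but $g$ cannot be composed with $f$ directly. The remedy is to replace $g$ by the morphism $q_g:\Pp^h(g)\to C$ and use that $p_g:\Pp^h(g)\to B$ is a ho-equivalence with fixed homotopy inverse $\iota_g:B\hto\Pp^h(g)$. Concretely, I would set
$$[g]*[f]:=[\,q_g\circ(\iota_g f)\,],$$
where $\iota_g f:A\hto\Pp^h(g)$ is the composition of a ho-morphism with a ho-morphism — but here $\iota_g$ is a homotopy inverse of an actual morphism, so one can instead factor once more: apply $\Phi$ to $\iota_g f$ to get a genuine zig-zag, or more simply, observe that $\iota_g f$ can be formed because $\iota_g$, being the homotopy-inverse datum of $p_g$, is itself a ho-morphism, and then $q_g\circ(\iota_g f)$ is a ho-morphism (morphism composed with ho-morphism, via Lemma \ref{compo_senzilles}). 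Thus $[g]*[f]$ is the class of a ho-morphism, and this is well defined provided the class does not depend on the representatives $f$, $g$.

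\textbf{Key steps, in order.} First I would make precise that the operation sends $(f,g)$ to the homotopy class of the ho-morphism $q_g\circ(\iota_g\, f)$, where $\iota_g\, f$ uses that $\iota_g$ is a ho-morphism and $f$ is a ho-morphism whose source $A$ is cofibrant — so one first applies $\Phi_{A,B}(f)=q_f\circ p_f^{-1}$ to replace $f$ by a zig-zag of morphisms, composes with $\iota_g$ at the level of morphisms (on $\Pp^h(f)$), and takes the resulting ho-morphism out of $\Pp^h(f)$ back to $A$ using that $p_f$ is a ho-equivalence. This is exactly the pattern already used to define $\Phi$; concretely $[g]*[f]$ should be read as the class represented by any ho-morphism $A\hto C$ whose image under $\Phi_{A,C}$ equals $\Phi_{B,C}(g)\circ\Phi_{A,B}(f)$ in $\GCc[\Hh^{-1}]$. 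Second, I would check well-definedness: if $f\simeq f'$ then $\Phi_{A,B}(f)=\Phi_{A,B}(f')$ by Lemma \ref{p3}, and similarly for $g\simeq g'$; since the right-hand side is built purely out of $\Phi$-values, the class is unchanged. Here one needs Proposition \ref{equivalenceia_hoho} to know that $\sim$ on ho-morphisms out of $A$ is an honest equivalence relation, so that $[A,B]^h$ and $[A,C]^h$ are well-defined quotient sets and the map descends. Third, for (1): if $g$ is a morphism of $\GCc$, then by Lemma \ref{p1} $\Phi_{B,C}(g)=g$ and by Lemma \ref{p2} $\Phi_{B,C}(g)\circ\Phi_{A,B}(f)=\Phi_{A,C}(gf)$ with $gf$ the composition of Lemma \ref{compo_senzilles}, so $[g]*[f]=[gf]$; if instead $f$ is a morphism, then $\iota_f$ is a morphism (Remark/proof of Lemma \ref{p1}), $\Phi_{A,B}(f)=f$, and one checks directly that $q_g\circ(\iota_g f)$ is homotopic to $gf$ using that $\iota_g p_g\simeq 1$ as ho-morphisms (Proposition \ref{inversaho}(2)) — precompose this homotopy with $f$. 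Fourth, for (2): with $h$ a morphism, Lemma \ref{p2} gives $\Phi(h)\circ\Phi(g)\circ\Phi(f)=\Phi(hg)\circ\Phi(f)$ in $\GCc[\Hh^{-1}]$, i.e. $\Phi$ of the two sides agree; applying well-definedness and part (1) (for the morphism $h$) translates this identity of $\Phi$-values back into the asserted equality $[h]*([g]*[f])=[hg]*[f]$ of homotopy classes.

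\textbf{Main obstacle.} The delicate point is not the formal manipulation of $\Phi$ but showing that the recipe genuinely lands in $[A,C]^h$ — that is, exhibiting an actual ho-morphism $A\hto C$ representing $q_g\circ p_g^{-1}\circ\Phi_{A,B}(f)$ and checking its homotopy class is insensitive to the choices (the lift $\iota_g$, the mapping-path construction). This is where cofibrancy of $A$ is essential: it is needed both to apply Lemma \ref{holift} inside the proof of Proposition \ref{equivalenceia_hoho} (transitivity of $\sim$), and to guarantee that composing the homotopy $\iota_g p_g\simeq 1$ with $f$ and transporting it through $\Pp^h(f)$ yields a bona fide homotopy of ho-morphisms rather than merely a chain of homotopies. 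So the proof amounts to carefully threading the homotopies of Proposition \ref{inversaho}(2) through the mapping-path functor and invoking Proposition \ref{equivalenceia_hoho} to collapse the resulting chains; the rest is bookkeeping with Lemmas \ref{p1}, \ref{p2}, \ref{p3}.
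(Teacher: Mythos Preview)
Your proposal has a genuine circularity problem. You want to define $[g]*[f]$ as the class of a ho-morphism $A\hto C$ whose image under $\Phi_{A,C}$ equals $\Phi_{B,C}(g)\circ\Phi_{A,B}(f)$, and to prove well-definedness and property~(2) by checking equalities in $\GCc[\Hh^{-1}]$ and then ``translating back'' to $[A,C]^h$. But translating an equality of $\Phi$-values back into an equality of homotopy classes requires $\Phi_{A,C}$ to be \emph{injective} on $[A,C]^h$. In the paper this injectivity is Proposition~\ref{bijeccio_homorfs}, whose proof constructs an inverse $\Psi$ that is defined precisely via the operation $*$ of the present lemma. So your argument assumes what is to be proved. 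The same issue undermines your construction of a representative: the candidate $q_g\circ(\iota_g f)$ involves $\iota_g\circ f$, which is a composition of two ho-morphisms and is therefore undefined; your proposed workaround (factor $f$ via $\Pp^h(f)$ and compose with $\iota_f$) again lands on a composition of two ho-morphisms.

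The paper avoids this entirely by giving an explicit, intrinsic formula. Given representatives $f=(f_i,F_u)$ and $g=(g_i,G_u)$, it sets
\[
g*f:=(g_if_i,\; P(g_j)F_u\,\wt{+}\,G_uf_i),
\]
where $\wt{+}$ is the homotopy concatenation built from the folding map $\nabla$ and a lift along the trivial fibration $\pi_{C_j}:P^2(C_j)\to\Pp(\delta^1_{C_j})$, which exists because each $A_i$ is cofibrant (exactly as in Proposition~\ref{relequiv}). Well-definedness under $f\simeq f'$ and $g\simeq g'$ is checked by producing explicit second homotopies via Lemma~\ref{holift} and the interchange $\mu$; properties~(1) and~(2) are verified by direct computation with the identities for $\nabla$, $\iota$ and $\delta^k$. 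Only \emph{after} this explicit $*$ is in hand does the paper build $\Psi$ and prove that $\Phi$ is a bijection; Lemma~\ref{transfer_additiu} then records a posteriori that $*$ corresponds to composition in $\GCc[\Hh^{-1}]$ under $\Phi$. In short, you are trying to use the equivalence $\pi^h\GCc_{cof}\simeq\GCc_{cof}[\Hh^{-1},\GCc]$ to define $*$, but that equivalence is a consequence of this lemma, not an input to it.
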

\begin{proof}
Given $[f]\in [A,B]^h$ and $[g]\in [B,C]^h$, choose representatives $f=(f_i,F_u)$ and $g=(g_i,G_u)$ respectively. 
We then let
$$g*f:=(g_if_i,P(g_j)F_u\wt{+} G_uf_i),$$ where $P(g_j)F_u\wt{+} G_uf_i=\nabla_{C_j}\Ll_u$ is defined as in the proof of Proposition $\ref{relequiv}$.
Let $g':B\hto C$ such that $h:g\simeq g'$.
Then $g'*f=(g_i'f_i,P(g_j')F_u\wt{+} G_u'f_i)$ where $P(g_j')F_u\wt{+} G_u'f_i=\nabla_{C_j}\Ll_u'$.
We next show that $g*f\simeq g'*f$. 

Let $\Gamma_u=(P(h_j)F_u,H_uf_i)$ and consider the commutative solid diagram:
$$
\xymatrix{
&A_i \ar@/_/[ddr]_{\Gamma_u} \ar@/^/[drrr]^{(\Ll_u,\Ll_u')} \ar@{.>}[dr]|-{K_u}\\
&& P^3(C_j) \ar[d]^{P(\pi_{C_j})} \ar[rr]_{} && P^2(C_j)\times P^2(C_j) \ar[d]^{\pi_{C_j}\times\pi_{C_j}}\\
&& P(\Pp(\delta^1_{C_j})) \ar[rr]_{}&& \Pp(\delta^1_{C_j})\times \Pp(\delta^1_{C_j})&.
}
$$
Since $\pi_{C_j}$ is a trivial fibration, by Lemma $\ref{holift}$
there exists a dotted arrow $K_u$, making the diagram commute. Let 
$$H_u':=\mu_{C_j}P(\nabla_{C_j})K_u:A_i\to P^2(C_j).$$
Then $(P(g_i)h_i,H_u')$ is a homotopy from $g*f$ to $g*f'$.

Analogously, if $f\simeq f'$ one proves that $g*f\simeq g*f'$. 
By Proposition $\ref{equivalenceia_hoho}$ the homotopy relation is transitive between ho-morphisms 
for which the source is in $\GCc_{cof}$. Therefore the class $[g*f]$ does not depend on the chosen representatives and liftings,
and the map $[g]*[f]:=[g*f]$ is well defined.

Let us prove (1). Let $[f]\in [A,B]^h$, and let $g:B\to C$ be a morphism. Choose a representative $f$ of $[f]$, and let $gf=(g_if_i,P(g_i)F_u)$. By Lemma
$\ref{compo_senzilles}$ this is a well defined ho-morphism from $A$ to $C$. We next show that
$[g]*[f]=[gf]$, when $g$ is considered as a ho-morphism with $G_u=(\iota_{C_j}\varphi_ug_i)$.
Consider the diagram
$$
\xymatrix{
&P^2(C_j)\ar@{->>}[d]^{\pi_{C_j}}\\
A_i\ar[ur]^{\Ll_u}\ar[r]^{\gamma_u}&\Pp(\delta^1_{C_j}),
}
$$
where $\gamma_u=(P(g_j)F_u, \iota_{C_j} g_j\varphi_uf_i)$, and 
$\Ll_u:=\iota_{P(C_j)}P(g_j)F_u$. By the naturality of $\delta^k$ and $\iota$ it follows that
$$
\left\{\begin{array}{l}
\delta^0_{P(C_j)}\Ll_u=\delta^0_{P(C_j)}\iota_{P(C_j)}P(g_j)F_u=P(g_j)F_u.\\
     P(\delta^1_{C_j})\Ll_u=\iota_{C_j}\delta^1_{C_j}P(g_j)F_u=\iota_{C_j}g_j\delta^1_{B_j}F_u=\iota_{c_j}g_j\varphi_uf_i.   
       \end{array}\right.
$$
Therefore the above diagram commutes. By definition,
 the folding map $\nabla$ (see Definition $\ref{foldingmap}$) satisfies $\nabla_{C_j}\iota_{P(C_j)}=1_{P(C_j)}$.
It follows that
$$K_u:=\nabla_{P(C_j)}\Ll_u=P(g_j)F_u.$$
Therefore $[g]*[f]=[gf]$.
The proof for the other composition follows analogously.\\

Let us prove (2). Let $f:A\hto B$ and $g:B\hto C$ be ho-morphisms, and let
$\gamma_u=(P(g_j)F_u,G_uf_i)$. We have
$f*g=(f_ig_i,K_u)$, where $K_u=\nabla_{P(C_j)}\Ll_u$, and $\Ll_u$ is an arbitrary morphism 
satisfying $\pi_{C_j}\Ll_u=\gamma_u$. If $h:C\to D$ is a morphism, by $(1)$ we have
$$[h]*([g]*[f])=[(h_ig_if_i,P(h_j)K_u)].$$
On the other hand, let $\gamma_u'=(P(h_jg_j)F_u,P(h_j)G_uf_i)$, and define a morphism
$\Ll_u':=P^2(h_j)\Ll_u$. Then $\pi_{D_j}\Ll_u'=\gamma_u'$. Therefore
$hg*f=(h_jg_jf_j,K_u')$, where $K_u'=\nabla_{D_j}\Ll_u'=P(h_j)K_u$.
The identity $[hg]*[f]=[h]*([g]*[f])$ follows.
\end{proof}

\subsection{Localization with respect to ho-equivalences}
We next show that if $A\in\GCc_{cof}$, the factorization map
$\Phi_{A,B}:\GCch(A,B)\to\GCc[\Hh^{-1}](A,B)$
is a bijection. This allows to define a category $\pGCch_{cof}$ whose objects are those of $\GCc_{cof}$
 and whose morphisms are homotopy classes of ho-morphisms. We then show that this is 
equivalent to the relative localization $\GCc_{cof}[\Hh^{-1},\GCc]$, the full subcategory of 
$\GCc[\Hh^{-1}]$ whose objects are in $\GCc_{cof}$.
\begin{nada}\label{defini_psi}
For $A\in \GCc_{cof}$ and $B\in\GCc$ define a map
$\Psi_{A,B}:\GCc[\Hh^{-1}](A,B)\lra [A,B]^h$ as follows.

A morphism $f$ of $\GCc[\Hh^{-1}]$ can be represented by a zigzag 
where the arrows going to the left are morphisms of $\Hh$. We define $\Psi$ inductively over the length of the zigzag.
Let $\Psi(1_A)=[1_A]$ and assume that $\Psi(f')$ is defined for a zigzag $f'\in\GCc[\Hh^{-1}](A,B)$. We consider two cases:
\begin{enumerate}[(1)]
 \item If $f=gf'$, where $g:B\to C$ is a morphism of $\GCc$ then
$\Psi(f):=[g]*\Psi(f').$
\item If $f=g^{-1}f'$ where $g:C\to B$ is a ho-equivalence, then
$\Psi(f):=[h]*\Psi(f'),$
where $h:B\hto C$ is a homotopy inverse of $g$.
If $h'$ is another homotopy inverse of $g$, then $h'\sim h'gh\sim h$, and so $[h]=[h']$.
Hence $\Psi$
does not depend on the chosen homotopy inverse.
\end{enumerate}
\end{nada}
\begin{lem}
Let $A$ be an object of $\GCc_{cof}$. The map
$\Psi_{A,B}:\GCc[\Hh^{-1}](A,B)\to [A,B]^h$
induced by $f\mapsto \Psi(f)$ is well defined
for any object $B$ of $\GCc$.
\end{lem}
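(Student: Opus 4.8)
The plan is to show that the value of $\Psi$ is independent of the zigzag chosen to represent a morphism of $\GCc[\Hh^{-1}]$. Recall that a morphism of $\GCc[\Hh^{-1}]$ is a congruence class of zigzags in $\GCc$ whose backward arrows lie in $\Hh$, the congruence being generated by the elementary moves: (a) replace two consecutive arrows pointing the same way by their composite; (b) delete an identity arrow; (c) for $w\in\Hh$, delete a consecutive pair $\bar w\,w$ or $w\,\bar w$. All other relations between zigzags (for instance, replacing two consecutive backward arrows $\bar w_1,\bar w_2$ by $\overline{w_2w_1}$) are consequences of these, so it is enough to show $\Psi$ is invariant under (a), (b) and (c). Now $\Psi$ is computed by processing a zigzag from its $A$-end: at each stage one has a class $[\phi]\in[A,X]^h$ and passes either to $[g]*[\phi]$ for a forward arrow $g\colon X\to Y$, or to $[h]*[\phi]$ for a backward arrow with underlying $w\colon Y\to X$ in $\Hh$ and $h\colon X\hto Y$ a ho-inverse of $w$. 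Since $A$ is cofibrant, Lemma~$\ref{compohomos}$ applies at every step, $*$ takes values in $[A,-]^h$, and — using the independence statement of that lemma and transitivity of homotopy on ho-morphisms out of a cofibrant source (Proposition~$\ref{equivalenceia_hoho}$) — each intermediate class is well defined. By this left-to-right structure, invariance of $\Psi$ under an elementary move reduces to a local statement: if the move replaces a segment $W$ from $X$ to $Y$ by a segment $W'$, then processing $W$ and processing $W'$ yield the same output on an arbitrary class $[\phi]\in[A,X]^h$ (an internal move being handled by first processing the common initial segment, which is fixed).

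I would then deal with the easy moves. For (a) with forward morphisms $g_1\colon X\to Y$ and $g_2\colon Y\to Z$, processing $W$ gives $[g_2]*([g_1]*[\phi])$ and processing $W'$ gives $[g_2g_1]*[\phi]$; these agree by Lemma~$\ref{compohomos}$(2). For (b), an identity arrow (in either direction, taking $h=\mathrm{id}$ in the backward case) contributes $[\mathrm{id}]*[\phi]=[\phi]$ by Lemma~$\ref{compohomos}$(1). For (c) with the pair $w\,\bar w$, i.e.\ a backward arrow $\bar w$ followed by the forward arrow $w\colon X\to Y$ in $\Hh$: processing $W$ on $[\phi]\in[A,Y]^h$ gives $[w]*([h]*[\phi])$ with $h\colon Y\hto X$ a ho-inverse of $w$; the outermost map $w$ is a morphism, so Lemma~$\ref{compohomos}$(2) gives $[w]*([h]*[\phi])=[wh]*[\phi]$, and $wh\simeq\cdots\simeq 1_Y$ because $h$ is a ho-inverse of $w$; hence by Lemma~$\ref{compohomos}$(1) the output is $[1_Y]*[\phi]=[\phi]$, which is the result of processing the empty segment $W'$.

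The main obstacle is the remaining case of (c): the pair $\bar w\,w$, namely the forward arrow $w\colon X\to Y$ in $\Hh$ followed by the backward arrow $\bar w$. Processing $W$ on $[\phi]\in[A,X]^h$ gives $[h]*([w]*[\phi])$ with $h\colon Y\hto X$ a ho-inverse of $w$, and one must show this equals $[\phi]$. Here Lemma~$\ref{compohomos}$(2) does not apply, because its outermost map is required to be an honest morphism, whereas ours is the ho-morphism $h$. The way around this is an auxiliary associativity identity valid when the \emph{middle} map is a morphism,
\[
[h]*([w]*[\phi])=[hw]*[\phi],
\]
for $h$ a ho-morphism, $w$ a morphism, and $\phi$ a ho-morphism with cofibrant source. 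This is verified directly from the explicit formula $g*f=(g_if_i,\,P(g_j)F_u\,\wt{+}\,G_uf_i)$ in the proof of Lemma~$\ref{compohomos}$ together with the composition rules of Lemma~$\ref{compo_senzilles}$: both sides are represented by the ho-morphism with components $(h_iw_i\phi_i,\,P(h_jw_j)\Phi_u\,\wt{+}\,H_uw_i\phi_i)$, with the same lifting data and with the value independent of the lifting. Granting this, $[h]*([w]*[\phi])=[hw]*[\phi]=[1_X]*[\phi]=[\phi]$, since $hw\simeq\cdots\simeq 1_X$. Together with the independence of $\Psi$ from the chosen ho-inverse at a backward step, already recorded in~$\ref{defini_psi}$, this shows $\Psi$ is unchanged by every elementary move, hence constant on congruence classes, so $\Psi_{A,B}$ is well defined.
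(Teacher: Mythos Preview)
Your proposal is correct but follows a different route from the paper. The paper argues via hammocks of height one: given a commutative ladder connecting two zigzags (with the backward arrows decomposed into ho-equivalences), it shows by induction along the ladder that $\Psi$ takes the same value on both rows. The key observation there is that a commutative square whose horizontal edges are ho-equivalences becomes homotopy-commutative when those edges are replaced by their ho-inverses; the inductive step then uses Lemma~\ref{compohomos}(1),(2) repeatedly. Your approach instead works directly with the Gabriel--Zisman presentation of $\GCc[\Hh^{-1}]$ and checks invariance under each generating relation. The decisive new ingredient in your argument is the ``middle associativity'' $[h]*([w]*[\phi])=[hw]*[\phi]$ when the middle factor $w$ is a genuine morphism; this is not among the stated conclusions of Lemma~\ref{compohomos}, but, as you observe, both sides reduce to the same explicit formula $(h_iw_i\phi_i,\,P(h_jw_j)\Phi_u\,\wt{+}\,H_uw_i\phi_i)$ with a common lifting, so they agree on the nose and hence as classes. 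Your route is arguably more direct for this particular lemma, while the paper's hammock induction packages the same content into a single sweep and makes the connection to the ladder picture explicit.

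One small imprecision: elements of $\Hh$ are by definition \emph{composites} of ho-equivalences, and such a composite need not itself admit a single ho-inverse in the sense of Definition~\ref{hoequivalencies}. You should take your backward arrows to be ho-equivalences rather than arbitrary elements of~$\Hh$ (this is harmless, since localizing at $\Hh$ coincides with localizing at the class of ho-equivalences), which is also what the paper does when it writes $g_k=g_k^1\cdots g_k^{n_k}$.
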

\begin{proof}
We need to prove that the definition does not depend on the chosen representative, that is,
given a hammock between zig-zags $f$ and $\wt f$, then $\Psi(f)=\Psi(\wt f)$.
The proof is based on the fact that, given the commutative diagram
on the left,
$$
\xymatrix{\ar @{} [dr] |{=}
D\ar[d]_{\alpha}&C\ar[l]_{g}\ar[d]^{\beta \,\,\,\,\,\,\,\,\,\,\Longrightarrow}\\
\wt D&\wt C\ar[l]^{\wt g}
}\,\,\,\,\,\,
\xymatrix{\ar @{} [dr] |{\simeq}
D\ar@{~>}[r]^{h}\ar[d]_{\alpha}&C\ar[d]^{\beta}\\
\wt D\ar@{~>}[r]_{\wt h}&\wt C&,
}
$$
where $g$ and $\wt g$ is are ho-equivalences,
then the diagram on the right commutes up to  homotopy,
where $h$ and $\wt h$ are homotopy inverses of $g$ and $\wt g$ respectively.

It suffices to consider the case when
$f$ and $\wt f$ are related by a hammock of height 1. 
Let
$$
\xymatrix{
f:=\\
\wt f:=
}
\xymatrix{
\ar@{=}[d]A\ar[r]^{f_1}&\ar[d]^{\alpha_1}D_1&\ar[d]^{\beta_1}C_1\ar[l]_{g_1}\ar[r]^{f_2}&\ar[d]^{\alpha_2}D_2&\ar[d]^{\beta_2}C_2\ar[l]_{g_2}\ar[r]^{f_3}&\cdots\ar[r]&\ar[d]^{\alpha_r}D_r&B\ar[l]_{g_r}\ar@{=}[d]\\
A\ar[r]^{\wt f_1}&\wt D_1&\wt C_1\ar[l]_{\wt g_1}\ar[r]^{\wt f_2}&D_2&\wt C_2\ar[l]_{\wt g_2}\ar[r]^{\wt f_3}&\cdots\ar[r]&D_r&B\ar[l]_{\wt g_r}
}
$$
be a commutative diagram,
where $g_k:C_k\to D_k$ and $\wt g_k: \wt C_k\to \wt D_k$ are compositions of ho-equivalences.
Let $f(0)=\wt f(0)=1_A$ and for all $0<k\leq r$, define 
$$f(k):=g_k^{-1}f_k\cdots g_1^{-1}f_1,\text{ and } \wt f(k):=\wt g_k^{-1}\wt f_k\cdots \wt g_1^{-1}\wt f_1.$$
Write $g_k=g_k^1\cdots g_k^{n_k}$ and
$\wt g_k=\wt g_k^1\cdots \wt g_k^{\wt n_k}$, and let
$h_k^j$ and $\wt h_k^j$ be homotopy inverses of $g_k^j$ and $\wt g_k^j$ respectively. With these notations we have
\setlength{\extrarowheight}{0.2cm}
$$\begin{array}{l}\Psi(f(k)):=[h_{k}^{n_k}]*(\cdots *([h_k^2]*([h_k^1]*([f_k]*\Psi(f(k-1)))))),\\
\Psi(\wt f(k)):=[\wt h_{k}^{n_k}]*(\cdots *([\wt h_k^2]*([\wt h_k^1]*([\wt f_k]*\Psi(\wt f(k-1)))))).\end{array}$$
\setlength{\extrarowheight}{0.0cm}

From the definition it follows that:
\begin{equation}
[g_k]*\Psi(f(k))=[f_k]* \Psi(f(k-1)).
\tag{{$p_k$}}\end{equation}
We will now proceed by induction.
Assume that for all $n<k$ we have
\begin{equation}
\Psi(\wt f(n))=[\beta_n]*\Psi(f(n)).
\tag{{$h_n$}}\end{equation}
For the following identities we will constantly use $(1)$ and $(2)$ of Lemma $\ref{compohomos}$.
We have:
\setlength{\extrarowheight}{0.2cm}
$$
\begin{array}{l@{\,}l@{\,}l@{\,\,\,\,\,}l}
\Psi(\wt f(k))&=& [\wt h_k^{n_k}]*(\cdots *([\wt h_k^1]*([\wt f_k]*\Psi(\wt f(k\text{-}1)))))&\text{( by ($h_{k-1}$) )}\\
&=& [\wt h_k^{n_k}]*(\cdots *([\wt h_k^1]*([\wt f_k\beta_{k-1}]*\Psi(f(k\text{-}1)))))&\text{( $\wt f_k\beta_{k-1}=\alpha_kf_k$ )}\\
&=& [\wt h_k^{n_k}]*(\cdots *([\wt h_k^1]*([\alpha_kf_k]*\Psi(f(k\text{-}1)))))&\text{( by ($p_{k}$) )}\\
&=& [\wt h_k^{n_k}]*(\cdots *([\wt h_k^1]*([\alpha_kg_k]*\Psi(f(k))&\text{( $\alpha_kg_k=\wt g_k\beta_k$ )}\\
&=& [\wt h_k^{n_k}]*(\cdots *([\wt h_k^1]*([\wt g_k\beta_k]*\Psi(f(k))&\text{( $\wt g_k=\wt g_k^1\cdots \wt g_k^{\wt n_k}$ )}\\
&=& [\beta_k]*\Psi(f(k)).
\end{array}
$$
\setlength{\extrarowheight}{0.0cm}

Since $\beta_r=1_B$ we get
$\Psi(\wt f)=\Psi(\wt f(r))=\Psi(f(r))=\Psi(f).$
\end{proof}

\begin{prop}\label{bijeccio_homorfs}
Let $A$ be an object of $\GCc_{cof}$. The maps
$$
\Phi_{A,B}:[A,B]^h\rightleftarrows\GCc[\Hh^{-1}](A,B):\Psi_{A,B}$$
are inverses to each other, for every object $B$ of $\GCc$.
\end{prop}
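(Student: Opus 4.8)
The plan is to show that the composites $\Phi_{A,B}\circ\Psi_{A,B}$ and $\Psi_{A,B}\circ\Phi_{A,B}$ are the respective identities. Both directions will rely heavily on the bookkeeping properties already established: Lemmas \ref{p1}, \ref{p2}, \ref{p3} for $\Phi$, and the two compatibility properties (1)--(2) of Lemma \ref{compohomos} for $*$, together with the inductive definition of $\Psi$ in \ref{defini_psi}.

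\emph{First}, I would check $\Phi_{A,B}\circ\Psi_{A,B}=\mathrm{id}$ on $\GCc[\Hh^{-1}](A,B)$. This goes by induction on the length of a zig-zag representing a morphism $f$ of $\GCc[\Hh^{-1}]$, following exactly the two cases in \ref{defini_psi}. The base case $f=1_A$ is immediate from Lemma \ref{p1}. For the inductive step with $f=gf'$ and $g\colon B\to C$ a morphism of $\GCc$, one has $\Psi(f)=[g]*\Psi(f')$; applying $\Phi$ and using that $\Phi$ is multiplicative against morphisms of $\GCc$ on the left (Lemma \ref{p2}, together with Lemma \ref{compohomos}(1) which identifies $[g]*[-]$ with post-composition by the morphism $g$) gives $\Phi([g]*\Psi(f'))=g\circ\Phi(\Psi(f'))=g\circ f'=f$ by the inductive hypothesis. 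For the case $f=g^{-1}f'$ with $g\colon C\to B$ a ho-equivalence and $h\colon B\hto C$ a homotopy inverse, we have $\Psi(f)=[h]*\Psi(f')$, so $\Phi(\Psi(f))=\Phi(h)\circ\Phi(\Psi(f'))=\Phi(h)\circ f'$; it then remains to see $\Phi(h)=g^{-1}$ in $\GCc[\Hh^{-1}]$. For this, note $\Phi(h)$ is represented by $q_h\circ p_h^{-1}$ and, since $g$ is an honest morphism, a Lemma-\ref{p2}-type argument gives $g\circ\Phi(h)=\Phi(gh)$; but $gh\simeq 1_C$ as ho-morphisms, so by Lemma \ref{p3} and Lemma \ref{p1} we get $\Phi(gh)=\Phi(1_C)=1_C$, hence $g\circ\Phi(h)=1_C$ and, $g$ being invertible in $\GCc[\Hh^{-1}]$, $\Phi(h)=g^{-1}$.

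\emph{Second}, I would check $\Psi_{A,B}\circ\Phi_{A,B}=\mathrm{id}$ on $[A,B]^h$. Given a ho-morphism $f\colon A\hto B$, by Proposition \ref{inversaho} we have $\Phi(f)=q_f\circ p_f^{-1}$ in $\GCc[\Hh^{-1}]$, where $p_f$ is a ho-equivalence with homotopy inverse $\iota_f$ and $q_f$ is a morphism of $\GCc$. Reading off the definition of $\Psi$ applied to the length-two zig-zag $A\xleftarrow{p_f}\Pp^h(f)\xrightarrow{q_f}B$, we obtain $\Psi(\Phi(f))=[q_f]*\Psi(p_f^{-1})=[q_f]*([\iota_f]*[1_A])=[q_f]*[\iota_f]=[q_f\iota_f]$, using Lemma \ref{compohomos}(1) (since $q_f$, or alternatively $\iota_f$ when viewed appropriately, is a morphism) and the composition law of Lemma \ref{compohomos}. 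Finally, Proposition \ref{inversaho} asserts that $f=q_f\circ\iota_f$ as ho-morphisms (the triangle in its statement commutes), so $[q_f\iota_f]=[f]$ in $[A,B]^h$, as desired. Here Proposition \ref{equivalenceia_hoho} is needed to know $[A,B]^h$ is a well-defined quotient set (homotopy is transitive, $A$ being cofibrant).

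\emph{The main obstacle} I anticipate is the bookkeeping in the second direction: one must be careful that the composite $q_f\circ\iota_f$ appearing in Proposition \ref{inversaho} really is the ho-morphism $f$ on the nose (not merely up to homotopy) and that the particular instance of Lemma \ref{compohomos}(1) being invoked applies --- i.e.\ one of the two factors $q_f$, $\iota_f$ must be genuinely a morphism of $\GCc$ rather than only a ho-morphism. Since $q_f$ is a morphism of $\GCc$ (shown in the construction preceding Proposition \ref{inversaho}), $[q_f]*[\iota_f]=[q_f\iota_f]$ by part (1), and the composition $q_f\iota_f$ is computed via Lemma \ref{compo_senzilles} and agrees with the ho-morphism $q_f\circ\iota_f$ of Proposition \ref{inversaho}. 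The remaining subtlety in the first direction is the identity $\Phi(h)=g^{-1}$ for a homotopy inverse $h$ of a ho-equivalence $g$, which requires the invertibility of $g$ in $\GCc[\Hh^{-1}]$ (true since $g\in\Hh$) together with Lemmas \ref{p1}--\ref{p3}; once these are in place the two verifications are formal.
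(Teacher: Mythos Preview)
Your proof follows the same two-part strategy as the paper (induction for $\Phi\circ\Psi=\mathrm{id}$, direct computation for $\Psi\circ\Phi=\mathrm{id}$), and the computation of $\Psi\circ\Phi$ is exactly the paper's. However, in case~(b) of the induction you assert
\[
\Phi\bigl([h]*\Psi(f')\bigr)=\Phi(h)\circ\Phi\bigl(\Psi(f')\bigr)
\]
without justification. No available lemma gives multiplicativity of $\Phi$ against $*$ when the outer factor is merely a ho-morphism: Lemma~\ref{p2} requires the \emph{left} factor to be a genuine morphism of $\GCc$, and here neither $h$ nor a representative of $\Psi(f')$ is one. Your separate verification that $\Phi(h)=g^{-1}$ is correct (and is a nice formulation), but it does not supply this missing step.

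The paper avoids this by composing on the left with the morphism $g$ \emph{before} trying to split the product. If $m$ represents $[h]*\Psi(f')$, then Lemma~\ref{p2} gives $g\circ\Phi(m)=\Phi(gm)$; by Lemma~\ref{compohomos}(2) one has $[gm]=[g]*([h]*\Psi(f'))=[gh]*\Psi(f')=\Psi(f')$ since $[gh]=[1]$, so $g\circ\Phi(\Psi(f))=\Phi(\Psi(f'))=f'$ by induction, and one finishes by inverting $g$. This is the same computation you would need to \emph{justify} your asserted identity, so the fix is local: replace the unproved multiplicativity by the left-composition-with-$g$ trick.
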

\begin{proof}To simplify notation, we omit the subscripts of both $\Psi$ and $\Phi$.
Let $[f]$ be an element of $[A,B]^h$. Then 
$$\Psi(\Phi([f]))=\Psi(\{q_fp_f^{-1}\})=[q_f]*[\iota_f]=[q_f\iota_f]=[f].$$
For the other composition,
we proceed by induction as follows. Assume that for $f'\in \GCc[\Hh^{-1}](A,B)$
we have $\Phi(\Psi(f'))=f'$. We consider two cases:
\begin{enumerate}[(1)]
 \item If $f=gf'$, where $g:B\to C$ is a morphism of $\GCc$ then
$$\Phi(\Psi(f))=\Phi(\Psi(gf'))=\Phi([g]* \Psi(f))=\Phi([g])\circ f=g\circ f.$$
\item If $f=g^{-1}f'$ where $g:C\to B$ is a ho-equivalence, then
$\Psi(g^{-1}f)=[h]* \Psi(f)$, where $h$ is a homotopy inverse of $g$.
By Lemma $\ref{p2}$ we can write $\{g\}=\Phi([g])$. Therefore
$$g\circ \Phi(\Psi(g^{-1}f))=\Phi([g])\circ \Phi([h]*\Psi(f))=\Psi(f).$$
If we compose on the left by $g^{-1}$ we obtain
$\Phi(\Psi(g^{-1}f))=g^{-1}\circ f.$
\end{enumerate}
\end{proof}

\begin{lem}\label{transfer_additiu}Let $A,B,C$ be objects of $\GCc_{cof}$ and let
 $[f]\in [A,B]^h$ and $[g]\in [B,C]^h$. Then
$$[g]*[f]=\Psi_{A,C}\left(\Phi_{B,C}([g])\circ \Phi_{A,B}([f])\right).$$
\end{lem}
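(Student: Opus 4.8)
The plan is to reduce the statement, via Proposition $\ref{bijeccio_homorfs}$, to a direct computation of $\Psi_{A,C}$ on the composite in the localized category. Since $\Phi_{A,C}$ and $\Psi_{A,C}$ are mutually inverse, the claimed identity holds as soon as we check that
$$\Psi_{A,C}\bigl(\Phi_{B,C}([g])\circ\Phi_{A,B}([f])\bigr)=[g]*[f].$$
I would not attempt to compute $\Phi_{A,C}$ of a $*$-composite directly, which looks more delicate; instead I exploit the explicit inductive definition of $\Psi$ in $\ref{defini_psi}$ together with the two composition rules of Lemma $\ref{compohomos}$.

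First I would choose representatives $f=(f_i,F_u)$ and $g=(g_i,G_u)$ and unwind the factorization map of $\ref{def_phi}$: one has $\Phi_{A,B}([f])=q_f\circ p_f^{-1}$ and $\Phi_{B,C}([g])=q_g\circ p_g^{-1}$, where $p_f\colon\Pp^h(f)\to A$ and $p_g\colon\Pp^h(g)\to B$ are ho-equivalences (Proposition $\ref{inversaho}$(2)) with chosen homotopy inverses $\iota_f\colon A\hto\Pp^h(f)$ and $\iota_g\colon B\hto\Pp^h(g)$, while $q_f$ and $q_g$ are genuine morphisms of $\GCc$ satisfying $q_f\iota_f=f$ and $q_g\iota_g=g$ by the commutative triangle of Proposition $\ref{inversaho}$. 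Consequently the composite $\Phi_{B,C}([g])\circ\Phi_{A,B}([f])$ is represented by the zig-zag
$$A\xleftarrow{p_f}\Pp^h(f)\xrightarrow{q_f}B\xleftarrow{p_g}\Pp^h(g)\xrightarrow{q_g}C,$$
whose left-pointing arrows $p_f,p_g$ lie in $\Hh$; hence $\Psi_{A,C}$ may be evaluated on this representative, the preceding lemma guaranteeing independence of the chosen representative.

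Next I would run the induction of $\ref{defini_psi}$ along this zig-zag, starting from the $A$-end. Inverting the ho-equivalence $p_f$ gives $\Psi(p_f^{-1})=[\iota_f]*[1_A]=[\iota_f]$; prepending the morphism $q_f$ then gives $\Psi(q_f\circ p_f^{-1})=[q_f]*[\iota_f]=[q_f\iota_f]=[f]$ by Lemma $\ref{compohomos}$(1). Prepending $p_g^{-1}$ yields $\Psi(p_g^{-1}\circ q_f\circ p_f^{-1})=[\iota_g]*[f]$, and finally prepending the morphism $q_g$ gives, now using Lemma $\ref{compohomos}$(2) together with $q_g\iota_g=g$,
$$\Psi_{A,C}\bigl(\Phi_{B,C}([g])\circ\Phi_{A,B}([f])\bigr)=[q_g]*\bigl([\iota_g]*[f]\bigr)=[q_g\iota_g]*[f]=[g]*[f],$$
which is the desired equality.

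The computation itself is purely formal once the setup is in place, so the only points that will require care are bookkeeping ones. I must check that $\iota_f$ and $\iota_g$ are legitimate homotopy inverses in the sense demanded by $\ref{defini_psi}$ (this is exactly Proposition $\ref{inversaho}$(2), and by the same token $\Psi$ does not depend on which inverse is picked), that each application of Lemma $\ref{compohomos}$ occurs in the permitted configuration ``morphism $*$ (class $*$ class)'', and that the concatenation above genuinely represents the composite in $\GCc[\Hh^{-1}]$. As a remark, this lemma together with Proposition $\ref{bijeccio_homorfs}$ is what allows one to turn $*$ into an associative composition law and to identify $\pGCch_{cof}$ with the relative localization $\GCc_{cof}[\Hh^{-1},\GCc]$.
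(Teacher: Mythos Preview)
Your proposal is correct and follows essentially the same argument as the paper: both represent $\Phi_{B,C}([g])\circ\Phi_{A,B}([f])$ by the zig-zag $q_g\,p_g^{-1}\,q_f\,p_f^{-1}$, unwind the inductive definition of $\Psi$ using $\iota_f,\iota_g$ as homotopy inverses, collapse $[q_f]*[\iota_f]=[f]$ via Lemma~\ref{compohomos}(1), and finish with $[q_g]*([\iota_g]*[f])=[q_g\iota_g]*[f]=[g]*[f]$ via Lemma~\ref{compohomos}(2). Your write-up is slightly more detailed in spelling out the intermediate stages of the induction, but there is no substantive difference.
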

\begin{proof}
Since $A,B,C$ are objects of $\GCc_{cof}$ the maps $\Psi_{A,-}$, $\Psi_{B,-}$ and $\Psi_{C,-}$ are well defined.
For the rest of the proof we omit the subscripts of $\Psi$ and $\Phi$.
By definition we have:
$$\Psi(\Phi([g])\circ \Phi([f]))=\Psi(\{q_gp_g^{-1}\}\circ \{q_fp_f^{-1}\})=
[q_g]*([\iota_g]*([q_f]*[\iota_f])).$$
Since $q_f$ and $q_g$ are morphisms of $\GCc$, we have $[q_g]*[\iota_g]=[q_g\iota_g]=[g]$, and 
$[q_f]*[\iota_f]=[q_f\iota_f]=[f]$. The result follows from (2) of Lemma $\ref{compohomos}$.
\end{proof}

\begin{teo}\label{equiv_homorf_hoequiv}
The objects of $\GCc_{cof}$ with the homotopy classes of ho-morphisms define a category
$\pGCch_{cof}$. There is an equivalence of categories
$$\Phi:\pGCch_{cof} {\rightleftarrows} \GCc_{cof}[\Hh^{-1},\GCc]:\Psi$$
\end{teo}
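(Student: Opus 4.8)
The content of Theorem $\ref{equiv_homorf_hoequiv}$ splits into two assertions: that $\pGCch_{cof}$ is a genuine category, and that $\Phi$ and $\Psi$ assemble into mutually inverse functors between $\pGCch_{cof}$ and the relative localization $\GCc_{cof}[\Hh^{-1},\GCc]$. Nearly all of the technical work has already been done in the preceding lemmas; the proof is essentially a matter of organizing it.

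First I would check that $\pGCch_{cof}$ is a category. The objects are those of $\GCc_{cof}$ and the hom-sets are $[A,B]^h$. The composition law is the map $*$ of Lemma $\ref{compohomos}$, which is well defined precisely because every source object is in $\GCc_{cof}$, so Proposition $\ref{equivalenceia_hoho}$ guarantees that homotopy is an equivalence relation on the relevant ho-morphisms and the class $[g*f]$ is independent of representatives and liftings. The identity morphism of $A$ is $[1_A]$; the unit axioms $[1_B]*[f]=[f]=[f]*[1_A]$ follow from part (1) of Lemma $\ref{compohomos}$, since $1_A$ and $1_B$ are honest morphisms and $f1_A=f=1_Bf$ under the composition of Lemma $\ref{compo_senzilles}$. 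The one genuinely new verification is associativity of $*$ in full generality (all three factors ho-morphisms). Here I would reduce to the partially-strict cases already handled: given $[f]\in[A,B]^h$, $[g]\in[B,C]^h$, $[h]\in[C,D]^h$, apply Lemma $\ref{transfer_additiu}$ three times to rewrite each iterated $*$-product as $\Psi$ of a composite of $\Phi$'s in $\GCc[\Hh^{-1}]$; associativity of $*$ then follows from associativity of composition in $\GCc[\Hh^{-1}]$ together with the bijectivity of $\Psi_{A,D}$ from Proposition $\ref{bijeccio_homorfs}$. This is the step I expect to be the main obstacle, mostly because it requires $\Phi$ and $\Psi$ already in hand, so the logical order matters: one must establish the bijections before finishing the category axioms.

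Next I would promote $\Phi$ to a functor $\pGCch_{cof}\to\GCc_{cof}[\Hh^{-1},\GCc]$. On objects it is the identity. On morphisms it sends $[f]$ to $q_f\circ p_f^{-1}$; this is well defined on homotopy classes by Lemma $\ref{p3}$, and lands in the relative localization since all objects involved lie in $\GCc_{cof}$. Functoriality means $\Phi([g]*[f])=\Phi([g])\circ\Phi([f])$ and $\Phi([1_A])=\mathrm{id}_A$. The latter is Lemma $\ref{p1}$. The former is exactly the content of Lemma $\ref{transfer_additiu}$ once combined with Proposition $\ref{bijeccio_homorfs}$: that lemma gives $[g]*[f]=\Psi(\Phi([g])\circ\Phi([f]))$, and applying $\Phi$ and using $\Phi\Psi=\mathrm{id}$ yields $\Phi([g]*[f])=\Phi([g])\circ\Phi([f])$.

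Finally I would observe that $\Psi$ is the inverse functor. Proposition $\ref{bijeccio_homorfs}$ already shows $\Phi_{A,B}$ and $\Psi_{A,B}$ are mutually inverse bijections on each hom-set, so $\Psi$ is automatically a functor (being a hom-set-wise inverse of the functor $\Phi$) and $\Phi$, $\Psi$ are mutually inverse equivalences — in fact isomorphisms — of categories. I would close by remarking that in particular $\Phi$ is fully faithful and essentially surjective (trivially so, being the identity on objects), which is all that the statement claims. No further computation is needed beyond citing the assembled lemmas.
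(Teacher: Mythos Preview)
Your proposal is correct and follows essentially the same route as the paper: both arguments establish associativity of $*$ by invoking the identity $[g]*[f]=\Psi(\Phi([g])\circ\Phi([f]))$ from Lemma~\ref{transfer_additiu} together with $\Phi\Psi=1$ from Proposition~\ref{bijeccio_homorfs}, then deduce the equivalence of categories directly from the hom-set bijections of that same proposition. Your write-up is in fact more explicit than the paper's about the unit axioms, the functoriality of $\Phi$, and the logical dependence of the category axioms on the prior bijections, but the underlying strategy is identical.
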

\begin{proof}
Let $f:A\hto B$ and $g:B\hto C$ be two ho-morphisms between objects of $\GCc_{cof}$,
We first show that
$[g]*[f]=\Psi\left(\Phi([g])\circ \Phi([f])\right).$
By definition we have:
$$\Psi(\Phi([g])\circ \Phi([f]))=\Psi(q_gp_g^{-1}\circ q_fp_f^{-1})=
[q_g]*([\iota_g]*([q_f]*[\iota_f])).$$
Since $q_f$ and $q_g$ are morphisms of $\GCc$, we have $[q_g]*[\iota_g]=[q_g\iota_g]=[g]$, and 
$[q_f]*[\iota_f]=[q_f\iota_f]=[f]$. The result follows from (2) of Lemma $\ref{compohomos}$.

If $h:C\hto D$ is another ho-morphism between objects of $\GCc_{cof}$, we have:
$$[h]*([g]*[f])=
\Psi\left(\Phi([h])\circ\Phi(\Psi(\Phi([g])\circ \Phi([f])))\right).
$$
By Proposition $\ref{bijeccio_homorfs}$ we have $\Phi\Psi=1$, and hence,
$$[h]*([g]*[f])=
\Psi\left(\Phi([h])\circ\Phi([g])\circ \Phi([f]))\right)=([h]*[g])*[f].
$$
Therefore the composition of $\pGCch_{cof}$ is associative.
The equivalence of categories follows from Proposition $\ref{bijeccio_homorfs}$.
\end{proof}

\subsection{A Cartan-Eilenberg structure}
We next show that the triple $(\GCc,\Hh,\Ww)$ is a left Cartan-Eilenberg category, 
where the category $\GCc_{cof}$ of level-wise cofibrant objects 
is a full subcategory of cofibrant models.
\begin{lem}\label{trivfib_levanta}
Let $C$ be an object of $\GCc_{cof}$. For every diagram 
$$
\xymatrix{
&A\ar@{->>}[d]^w_{\wr}\\
C\ar@{.>}[ur]\ar@{~>}[r]^f&B&,
}
$$
where $w$ is a trivial fibration of $\GCc$ and $f$ is a ho-morphism,
there is a ho-morphism $g:C\hto A$ making the diagram commute.
\end{lem}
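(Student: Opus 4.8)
The plan is to build the lift $g=(g_i,G_u)$ one piece at a time: the defining lifting property of cofibrant objects (Definition~\ref{defcof}) produces the vertex maps $g_i$, and the homotopy lifting property (Lemma~\ref{holift}) produces the connecting homotopies $G_u$. The choreography matters: one must fix \emph{all} the vertices first and only then lift the homotopies relative to them.

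First I would fix, for every $i\in I$, a morphism $g_i\colon C_i\to A_i$ with $w_ig_i=f_i$. This is possible because $w$ is a trivial fibration of $\GCc$, so each $w_i\colon A_i\twoheadrightarrow B_i$ is a trivial fibration of $\Cc_i$, while $C_i$ is cofibrant in $\Cc_i$ since $C\in\GCc_{cof}$; hence Definition~\ref{defcof}, applied to $f_i\colon C_i\to B_i$ and the trivial fibration $w_i$, yields the desired lift $g_i$. These choices are made independently at the different vertices, so in general $g_j\varphi_u\neq\varphi_ug_i$ on the nose, which is exactly why the homotopies $G_u$ are needed. Next, for each $u\colon i\to j$ in $I$ I would produce $G_u\colon C_i\to P(A_j)$ with $\delta^0_{A_j}G_u=g_j\varphi_u$, $\delta^1_{A_j}G_u=\varphi_ug_i$ and $P(w_j)G_u=F_u$. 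The point is that $F_u$ is already a homotopy between the $w_j$-images of these two maps: since $w\colon A\to B$ is a morphism of diagrams,
$$w_j(g_j\varphi_u)=f_j\varphi_u=\delta^0_{B_j}F_u,\qquad w_j(\varphi_ug_i)=\varphi_uw_ig_i=\varphi_uf_i=\delta^1_{B_j}F_u.$$
Applying Lemma~\ref{holift} in the P-category $\Cc_j$, with cofibrant object $C_i$, trivial fibration $w_j$, maps $(g_j\varphi_u,\varphi_ug_i)$ and homotopy $F_u$, produces such a $G_u$; for $u$ an identity the same lemma (now in $\Cc_i$) applies, or one may just take $G_{\mathrm{id}_i}=\iota_{A_i}g_i$, since $P(w_i)\iota_{A_i}g_i=\iota_{B_i}w_ig_i=\iota_{B_i}f_i$.

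Finally I would check that $g=(g_i,G_u)$ is a ho-morphism and that $wg=f$. The first holds because, by condition (I$_1$), the index category $I$ has binary degree, so there are no composable pairs of non-identity morphisms and hence no compatibility is required among the $G_u$ beyond conditions (i)–(ii) of Definition~\ref{homorfismes}, both of which hold by construction. For the second, Lemma~\ref{compo_senzilles} gives $wg=(w_ig_i,P(w_j)G_u)=(f_i,F_u)=f$, since $w_ig_i=f_i$ and $P(w_j)G_u=F_u$ by construction. I do not expect a genuine obstacle: the whole argument is a vertex-by-vertex application of the two lifting properties, and the only thing to be careful about is to fix every $g_i$ before lifting each $F_u$ relative to the already-chosen endpoints $g_j\varphi_u$ and $\varphi_ug_i$ — it is the binary-degree hypothesis on $I$ that guarantees these local choices never need to be reconciled along composites, so that the collection $(g_i,G_u)$ automatically assembles into a ho-morphism.
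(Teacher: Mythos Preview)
Your proof is correct and follows essentially the same approach as the paper: lift each $f_i$ to $g_i$ using the cofibrancy of $C_i$, then lift each homotopy $F_u$ to $G_u$ via Lemma~\ref{holift}, and assemble these into the ho-morphism $g=(g_i,G_u)$ with $wg=f$. Your additional remarks on the identity morphisms and on why the binary-degree hypothesis (I$_1$) ensures no further coherence is required are helpful clarifications that the paper leaves implicit.
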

\begin{proof}
Since $C_i$ is cofibrant for each $i\in I$, there are maps $g_i:C_i\to A_i$ such that $w_ig_i=f_i$.
We have
$w_jg_j\varphi_u=f_j\varphi_u{\simeq}\varphi_uf_i=\varphi_uw_ig_i=w_j\varphi_ug_i.$
Consider the commutative solid diagram
$$
\xymatrix{
C_i \ar@/_1pc/[ddr]_{F_u} \ar@/^1pc/[drrr]^{(g_j\varphi_u,\varphi_ug_i)} \ar@{.>}[dr]|-{G_u}\\
& P(A_j) \ar[d]^{P(w_j)} \ar[rr]^{(\delta^0_{A_j},\delta^1_{A_j})} && A_j\times A_j \ar[d]^{w_j\times w_j}\\
& P(B_j) \ar[rr]_{(\delta^0_{B_j},\delta^1_{B_j})} && B_j\times B_j&.
}
$$
Since $w_j$ is a trivial fibration, by Lemma $\ref{holift}$ there exists
a dotted arrow $G_u$, making the diagram commute.
The family $g=(g_i,G_u)$ is a ho-morphism, and $wg=f$.
\end{proof}

\begin{prop}\label{levanta_homorfismos}
Let $C$ be an object of $\GCc_{cof}$ and let $w:A\to B$ be a weak equivalence in $\GCc$.
The map
$w_*:[C,A]^h\lra [C,B]^h$
defined by $[f]\mapsto [wf]$ is a bijection.
\end{prop}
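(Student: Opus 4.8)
The plan is to follow the proof of Proposition \ref{bijecciocofibrants} almost verbatim, substituting the lifting property of cofibrant objects against trivial fibrations by its ho-morphism counterpart, Lemma \ref{trivfib_levanta}, and using Proposition \ref{equivalenceia_hoho} to move freely between the homotopy relation and its transitive closure on ho-morphisms out of $C$. Throughout we regard $\GCc$ as a $P$-category via Proposition \ref{diags_BCE}, so that the mapping path constructions and Lemmas \ref{factoritza_brown} and \ref{induit_we} apply.

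For surjectivity, fix a ho-morphism $f:C\hto B$. First I would factor $w$ through its mapping path: by Lemma \ref{factoritza_brown} we obtain $\Pp(w)$ together with a trivial fibration $p:\Pp(w)\twoheadrightarrow A$, a morphism $\iota:A\to\Pp(w)$ with $q\iota=w$, $p\iota=1_A$ and a homotopy $\iota p\simeq 1_{\Pp(w)}$, and $q:\Pp(w)\twoheadrightarrow B$, which is a \emph{trivial} fibration because $w$ is a weak equivalence (part (3) of that lemma). Applying Lemma \ref{trivfib_levanta} to the trivial fibration $q$ and the ho-morphism $f$ produces a ho-morphism $g':C\hto\Pp(w)$ with $qg'=f$. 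Set $g:=pg':C\hto A$, using Lemma \ref{compo_senzilles}. Then $wg=w(pg')=(q\iota p)g'=q(\iota p g')$; composing the homotopy $\iota p\simeq 1$ on the right with $g'$ (which by Lemma \ref{compo_senzilles} is again a homotopy of ho-morphisms) and then on the left with $q$ gives $q(\iota pg')\simeq qg'=f$. Hence $w_*[g]=[wg]=[f]$, so $w_*$ is surjective. Note this argument uses only that $w$ is a weak equivalence and $C\in\GCc_{cof}$.

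For injectivity, let $f_0,f_1:C\hto A$ satisfy $w_*[f_0]=w_*[f_1]$; by Proposition \ref{equivalenceia_hoho} (transitivity for ho-morphisms out of the cofibrant object $C$) this gives an honest homotopy, i.e.\ a ho-morphism $H:C\hto P(B)$ with $\delta^0_BH=wf_0$ and $\delta^1_BH=wf_1$. The double mapping path $\Pp(w,w)$ exists in $\GCc$ (axioms (P$_2$) and (P$_3$)), and the triple $(f_0,f_1,H)$ assembles into a ho-morphism $\mathcal H:C\hto\Pp(w,w)$ via the universal property of the level-wise pullbacks, using the compatibility of $P$ with fibre products in (P$_4$) for the homotopy data. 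By Lemma \ref{induit_we} the map $\ov w=((\delta^0_A,\delta^1_A),P(w)):P(A)\to\Pp(w,w)$ is a weak equivalence, so by the surjectivity just proved (applied to $\ov w$; here $P(A)$ and $\Pp(w,w)$ need not be cofibrant) there is a ho-morphism $G:C\hto P(A)$ with $\ov wG\simeq\mathcal H$. Composing this homotopy with the two projections $\Pp(w,w)\to A$ yields $\delta^0_AG\simeq f_0$ and $\delta^1_AG\simeq f_1$, while $G$ itself, being a ho-morphism into $P(A)$, is by definition a homotopy $\delta^0_AG\simeq\delta^1_AG$. Transitivity (Proposition \ref{equivalenceia_hoho}) then gives $f_0\simeq f_1$, i.e.\ $[f_0]=[f_1]$.

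The only genuinely delicate point is verifying that $(f_0,f_1,H)$ defines a ho-morphism into $\Pp(w,w)$ and, more generally, that products, pullbacks and projections of ho-morphisms, together with pre- and post-composition of homotopies of ho-morphisms by morphisms, behave as expected; all of this is routine given Lemma \ref{compo_senzilles} and axiom (P$_4$), but it is where the bookkeeping must be done carefully. Everything else is a direct transcription of the proof of Proposition \ref{bijecciocofibrants}.
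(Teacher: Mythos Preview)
Your proof is correct and follows essentially the same approach as the paper's own proof: factor $w$ via Lemma~\ref{factoritza_brown}, lift along the trivial fibration $q$ using Lemma~\ref{trivfib_levanta} for surjectivity, and for injectivity package $(f_0,f_1,H)$ into a ho-morphism to $\Pp(w,w)$ and apply surjectivity to the weak equivalence $\ov w$ of Lemma~\ref{induit_we}. The only difference is that the paper spells out explicitly, via two pull-back diagrams, the components $\gamma_i$ and $\Gamma_u$ of the ho-morphism $C\hto\Pp(w,w)$ and verifies the required identities, whereas you (rightly) flag this as the one place needing careful bookkeeping and defer it; your invocation of Proposition~\ref{equivalenceia_hoho} to pass from a chain of homotopies to a single homotopy is a point the paper leaves implicit.
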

\begin{proof}
We first prove surjectivity.
Let $f:C\hto B$ be a ho-morphism representing $[f]\in [C,B]^h$.
By Corollary $\ref{factoritza_brown}$ the map $w$ factors as 
a homotopy equivalence $\iota_w$ followed by a trivial fibration $q_w$, 
 giving rise to a solid diagram 
$$
\xymatrix{
&A\ar@<.6ex>[d]^{\iota_w}\ar@/^2pc/[dd]^{w}\\
&\Pp(w)\ar@<.6ex>[u]^{p_w}\ar@{->>}[d]^{q_w}_{\wr}\\
C\ar@{.>}[ur]^{g'}\ar@{~>}[r]^{f}&B&.
}
$$
By Lemma $\ref{trivfib_levanta}$ there is a ho-morphism $g':C \hto \Pp(w)$ such that $q_wg'=f$.
Let $g:=p_wg'$.
We have
$wg=q_w\iota_wg=q_w\iota_wp_wg'\simeq q_wg'= f.$
Therefore $[wg]=[f]$, and $w_*$ is surjective.

To prove injectivity, let $g,g':C\hto A$ be two ho-morphisms, representing $[g]$ and $[g']$ respectively and let $h:wg\simeq wg'$ be a homotopy.
Let $\Pp(w,w)$ denote the double mapping path of $w$,
defined by the fibre product
$$\xymatrix{
\pb\Pp(w_i,w_i)\ar[d]^{}\ar[r]^{}&P(B_i)\ar[d]^{(\delta^0_{B_i},\delta^1_{B_i})}\\
A_i\times A_i\ar[r]^{w_i\times w_i}&B_i\times B_i&,
}$$
for $i\in I$,
together with the comparison morphism $\psi_u=((\varphi_u\times\varphi_u)\pi_1,P(\varphi_u)\pi_2),$ for $u:i\to j$.

The triple $(g,g',h)$ defines a ho-morphism $\gamma:C\hto \Pp(w,w)$. Indeed,
for all $i\in I$, let $\gamma_i$ be the map defined by the pull-back diagram:
$$
\xymatrix{
C_i \ar@/_1pc/[ddr]_{(g_i,g_i')} \ar@/^1pc/[drr]^{h_i} \ar@{.>}[dr]|-{\gamma_i}\\
& \pb\Pp(w_i,w_i) \ar[d]^{} \ar[r]^{} & P(B_i) \ar[d]^{(\delta^0_{B_i},\delta^1_{B_i})}\\
& A_i\times A_i \ar[r]_{w_i\times w_i} & B_i\times B_i&,
}
$$
and for all $u:i\to j$ let $\Gamma_u$ be the map defined by the pull-back diagram:
$$
\xymatrix{
C_i \ar@/_1pc/[ddr]_{(G_u,G_u')} \ar@/^1pc/[drrr]^{H_u} \ar@{.>}[dr]|-{\Gamma_u}\\
& \pb P(\Pp(w_j,w_j)) \ar[d]^{} \ar[rr]^{} && P^2(B_j) \ar[d]^{(P(\delta^0_{B_j}),P(\delta^1_{B_j}))}\\
& P(A_j)\times P(A_j) \ar[rr]_{P(w_j\times w_j)} && P(B_j)\times P(B_j)&.
}
$$
Then the family $\gamma=(\gamma_i,\Gamma_u)$ is a ho-morphism. Indeed,
\setlength{\extrarowheight}{0.2cm}
$$\begin{array}{l}\delta^0_{\Pp(w_j,w_j)}\Gamma_u=((\delta^0_{A_j}G_u,\delta^0_{A_j}G_u'),\delta^0_{P(B_j)}H_u)=((g_j\varphi_u,g_j'\varphi_u),h_j\varphi_u)=\gamma_j\varphi_u,\\
\delta^1_{\Pp(w_j,w_j)}\Gamma_u=((\delta^1_{A_j}G_u,\delta^1_{A_j}G_u'),\delta^1_{P(B_j)}H_u)=((\varphi_ug_i,\varphi_ug_i'),P(\varphi_u)h_i))=\psi_u\gamma_i.\end{array}$$
\setlength{\extrarowheight}{0.0cm}

Consider the solid diagram
$$\xymatrix{
&P(A)\ar[d]^{\ov{w}}_{\wr}\\
C\ar@{.>}[ur]^{\gamma'}\ar@{~>}[r]^{\gamma}&\Pp(w,w)&.}
$$
By Lemma $\ref{induit_we}$ the map $\ov{w}$ defined level-wise by $\ov{w_i}=((\delta^0_{A_i},\delta^1_{A_i}),P(w_i))$ is a weak equivalence.
Hence $\ov{w}_*$ is surjective, and there exists a dotted arrow $\gamma'$ such that $\ov{w}\gamma'\simeq \gamma$.
It follows that $g\simeq \delta^0_A\gamma'\simeq \delta^1_A\gamma'\simeq g'$. Hence
$[g]=[g']$.
\end{proof}

\begin{cor}\label{cofibrants_diagrames}
Objects of $\GCc_{cof}$ are Cartan-Eilenberg cofibrant in $(\GCc,\Hh,\Ww)$, i.e., 
for every weak equivalence $w:A\to B$, and every object $C\in\GCc_{cof}$ the induced map
$w_*:\GCc[\Hh^{-1}](C,A)\to \GCc[\Hh^{-1}](C,B)$
is a bijection.
\end{cor}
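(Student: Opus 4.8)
The plan is to deduce the statement from Proposition~\ref{levanta_homorfismos} by transporting the bijection $w_*:[C,A]^h\to[C,B]^h$ along the bijections of Proposition~\ref{bijeccio_homorfs}. Fix an object $C$ of $\GCc_{cof}$ and a weak equivalence $w:A\to B$ of $\GCc$. The key point is to show that the square
$$
\begin{CD}
[C,A]^h @>{w_*}>> [C,B]^h\\
@VV{\Phi_{C,A}}V @VV{\Phi_{C,B}}V\\
\GCc[\Hh^{-1}](C,A) @>{w_*}>> \GCc[\Hh^{-1}](C,B)
\end{CD}
$$
commutes, where the top map is $[f]\mapsto[wf]$ from Proposition~\ref{levanta_homorfismos}, the bottom map is composition with the image of $w$ in $\GCc[\Hh^{-1}]$, and the verticals are the factorization maps of \ref{def_phi} (which descend to homotopy classes by Lemma~\ref{p3}).

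First I would check the commutativity. Given a ho-morphism $f:C\hto A$, Lemma~\ref{p2} applied to the ho-morphism $f$ and the morphism $w:A\to B$ of $\GCc$ gives $\Phi_{C,B}(wf)=\Phi_{A,B}(w)\circ\Phi_{C,A}(f)$ in $\GCc[\Hh^{-1}]$, while Lemma~\ref{p1} identifies $\Phi_{A,B}(w)$ with the image of $w$ itself. Hence
$$\Phi_{C,B}(w_*[f])=\Phi_{C,B}([wf])=w\circ\Phi_{C,A}([f])=w_*\bigl(\Phi_{C,A}([f])\bigr),$$
which is exactly the commutativity of the square.

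Then I would conclude formally: by Proposition~\ref{bijeccio_homorfs} the two vertical arrows are bijections, since $C\in\GCc_{cof}$, and by Proposition~\ref{levanta_homorfismos} the top arrow is a bijection; in a commuting square of maps of sets any three of whose edges are bijective, the fourth is bijective as well. Therefore the bottom arrow $w_*:\GCc[\Hh^{-1}](C,A)\to\GCc[\Hh^{-1}](C,B)$ is a bijection, which is precisely the assertion that $C$ is Cartan-Eilenberg cofibrant in $(\GCc,\Hh,\Ww)$.

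The only delicate point is the bookkeeping with the factorization map: one must apply Lemma~\ref{p2} with $f$ in the role of the ho-morphism and $w$ in the role of the genuine morphism of $\GCc$ (not the reverse), and keep track that the composition appearing in Proposition~\ref{levanta_homorfismos} is the same one that $\Phi$ and $\Psi$ interchange with ordinary composition in $\GCc[\Hh^{-1}]$. Apart from this indexing care there is no genuine obstacle: all the substantial work already sits in Propositions~\ref{levanta_homorfismos} and~\ref{bijeccio_homorfs}, and this corollary is their formal combination.
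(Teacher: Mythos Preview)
Your proof is correct and is precisely the argument the paper has in mind: the corollary is stated without proof immediately after Proposition~\ref{levanta_homorfismos}, and the intended deduction is exactly the commuting square you wrote, combining that proposition with the bijections of Proposition~\ref{bijeccio_homorfs} via Lemmas~\ref{p1}--\ref{p3}. There is nothing to add.
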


We prove the main result of this section.

\begin{teo}\label{diagramesCE0}
Let $\GCc$ be a diagram category satisfying conditions {\normalfont(I$_1$)-(I$_3$)} of $\ref{indexcat}$.
Assume that for each $i\in I$, the categories $\Cc_i$ have enough cofibrant models.
Then the triple $(\GCc,\Hh,\Ww)$ is a left Cartan-Eilenberg category and $\GCc_{cof}$ is a full subcategory of cofibrant models. 
The inclusion induces an equivalence of categories
$\pGCch_{cof}\stackrel{\sim}{\lra} \Ho(\GCc).$
\end{teo}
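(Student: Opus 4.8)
The plan is to reduce the statement to the general theory of Cartan--Eilenberg categories and to supply by hand the only missing ingredient, the existence of level-wise cofibrant models. By Proposition~\ref{diags_BCE} the category $\GCc$ is a P-category with level-wise structure, and since $\Ss\subset\Hh\subset\overline{\Ww}$ the triple $(\GCc,\Hh,\Ww)$ is a category with strong and weak equivalences; by Corollary~\ref{cofibrants_diagrames} every object of $\GCc_{cof}$ is Cartan--Eilenberg cofibrant in it. Granting that each diagram admits a left model lying in $\GCc_{cof}$, Theorem~2.3.4 of~\cite{GNPR} (used exactly as in the proof of Theorem~\ref{Pcat_es_CE}) will then give that $(\GCc,\Hh,\Ww)$ is a left Cartan--Eilenberg category with $\GCc_{cof}$ a full subcategory of cofibrant models, and that the inclusion induces an equivalence $\GCc_{cof}[\Hh^{-1},\GCc]\stackrel{\sim}{\lra}\Ho(\GCc)$. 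Composing this with the equivalence $\Phi:\pGCch_{cof}\stackrel{\sim}{\lra}\GCc_{cof}[\Hh^{-1},\GCc]$ of Theorem~\ref{equiv_homorf_hoequiv} produces the desired equivalence $\pGCch_{cof}\stackrel{\sim}{\lra}\Ho(\GCc)$. So the whole content is the existence of left models.

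To build a left model of a diagram $A=(A_i\stackrel{\varphi_u}{\dashrightarrow}A_j)$, I would first construct a ho-morphism $c:C\hto A$ with $C$ level-wise cofibrant and each $c_i$ a weak equivalence, and then rectify it via Proposition~\ref{inversaho}. The construction of $c$ exploits the binary degree function of~$I$ (axiom~(I$_1$)). For $|i|=0$, since $\Cc_i$ has enough cofibrant models, choose a weak equivalence $c_i:C_i\stackrel{\sim}{\lra}A_i$ with $C_i$ cofibrant; for $|j|=1$ choose likewise $c_j:C_j\stackrel{\sim}{\lra}A_j$ with $C_j$ cofibrant. For each non-identity $u:i\to j$ the object $u_*(C_i)$ is again cofibrant in $\Cc_j$, so by Proposition~\ref{bijecciocofibrants} the map $(c_j)_*:[u_*(C_i),C_j]\to[u_*(C_i),A_j]$ is a bijection; choose a morphism $\psi_u:u_*(C_i)\to C_j$ with $c_j\psi_u$ homotopic to $\varphi_u u_*(c_i)$, and record a homotopy $F_u:u_*(C_i)\to P(A_j)$ from $c_j\psi_u$ to $\varphi_u u_*(c_i)$. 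Because every non-identity morphism of an index category of binary degree runs from degree $0$ to degree $1$, there are no non-trivial compositions and hence no coherence relations among the $\psi_u$ to check; thus $C=(C_i\stackrel{\psi_u}{\dashrightarrow}C_j)$ is an object of $\GCc_{cof}$ and $c=(c_i,F_u):C\hto A$ is a ho-morphism which is level-wise a weak equivalence.

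It remains to turn $c$ into a left model. By Proposition~\ref{inversaho} applied to $c$ we obtain $\Pp^h(c)$ together with a ho-equivalence $p_c:\Pp^h(c)\to C$ and a morphism $q_c:\Pp^h(c)\to A$ of $\GCc$; part~(3) of that proposition, applied to the level-wise weak equivalence $c$, gives $q_c\in\Ww$. In $\GCc[\Hh^{-1}]$ the morphism attached to $c$ is $\Phi(c)=q_c\circ p_c^{-1}$, which makes sense since $p_c\in\Hh$, and under the localization $\GCc[\Hh^{-1}]\to\GCc[\Ww^{-1}]=\Ho(\GCc)$ --- which exists because $\Hh\subset\overline{\Ww}$ --- both $q_c$ and $p_c$ are sent to isomorphisms. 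Hence $\Phi(c):C\to A$ is an isomorphism in $\Ho(\GCc)$, so $C\in\GCc_{cof}$ is a left model of $A$, as required.

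The hard part is the middle step. Gluing the level-wise cofibrant models $C_i$ into an honest diagram forces a rectification of the comparison morphisms, and it is exactly the binary-degree hypothesis~(I$_1$) that removes the higher coherence obstructions: for a general index category the lifts $\psi_u$ would fail to be compatible along compositions and one would be pushed into an iterated, $A_\infty$-type rectification. The homotopy-lifting property of cofibrant objects (Proposition~\ref{bijecciocofibrants}, resting on Lemma~\ref{holift}) is precisely what makes the homotopies $F_u$ available, and one should take care over the two small points that $u_*$ must send cofibrant objects to cofibrant objects and that $q_c$, unlike in the absolute Factorization Lemma~\ref{factoritza_brown}, is only a level-wise weak equivalence and need not be a level-wise fibration --- which is harmless here, since only membership in $\Ww$ is used.
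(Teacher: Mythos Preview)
Your argument is correct and follows essentially the same route as the paper: build a level-wise cofibrant diagram $C$ together with a ho-morphism $c:C\hto A$ that is a level-wise weak equivalence (using Proposition~\ref{bijecciocofibrants} to lift the comparison maps), then pass to $\Phi_{C,A}(c)=q_c\circ p_c^{-1}$ in $\GCc[\Hh^{-1}]$ to obtain an isomorphism in $\Ho(\GCc)$, and conclude via Corollary~\ref{cofibrants_diagrames}, Theorem~2.3.4 of~\cite{GNPR}, and Theorem~\ref{equiv_homorf_hoequiv}. Your added remarks---that (I$_1$) eliminates higher coherence, that $q_c$ is only a weak equivalence and not a fibration, and that one is implicitly using that $u_*$ sends cofibrant objects to cofibrant objects---are all apt; the paper uses the last point tacitly when it invokes Proposition~\ref{bijecciocofibrants} in $\Cc_j$ with source $u_*(C_i)$.
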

\begin{proof}
Given an object $A$ of $\GCc$, let
$f_i:C_i\stackrel{\sim}{\lra} A_i$ be a cofibrant model of $A_i$, for each $i\in I$.
By Proposition $\ref{bijecciocofibrants}$, given the solid diagram in $\Cc_j$
$$\xymatrix{
C_i\ar[d]_{f_i}\ar@{.>}[r]^{\varphi_u'}&C_j\ar[d]^{f_j}\\
A_i\ar[r]^{\varphi_u}&A_j
}
$$
there exists a dotted arrow $\varphi_u'$ and a homotopy $F_u:f_j\varphi'_u\simeq \varphi_uf_i$.
This defines an object $C=(\varphi_u':C_i{\lra}C_j)$ of $\GCc_{cof}$ and a ho-morphism $f=(f_i,F_u):C\hto A$
which is a level-wise weak equivalence. By Corollary $\ref{cofibrants_diagrames}$, $C$ is Cartan-Eilenberg cofibrant in $(\GCc,\Hh,\Ww)$
and the map $\Phi_{C,A}(f):C\to A\in \GCc[\Hh^{-1}]$ 
is an isomorphism in $\GCc[\Ww^{-1}]$.
This proves that the triple $(\GCc,\Hh,\Ww)$ is a left Cartan-Eilenberg category with cofibrant models in $\GCc_{cof}$.
By Theorem $2.3.4$ of \cite{GNPR} the inclusion induces an equivalence of categories 
$\GCc_{cof}[\Hh^{-1},\GCc]\stackrel{\sim}{\lra} \Ho(\GCc)$.
The above equivalence follows from Theorem $\ref{equiv_homorf_hoequiv}$.
\end{proof}

To end this section we consider a generalization of Theorem $\ref{Pcat_es_CE}$, in which the category under study is a full subcategory of a diagram category.
This will be useful for the application to mixed Hodge theory.

\begin{teo}\label{diagramesCE}
Let $\GCc$ be a diagram category satisfying conditions {\normalfont(I$_1$)-(I$_3$)} of $\ref{indexcat}$.
Let $\Dd$ be a full subcategory of $\GCc$ such that:
\begin{enumerate}[(i)]
\item The mapping path of a ho-morphism between objects of $\Dd$ is an object of $\Dd$.
\item There is a full subcategory $\Mm\subset\Dd\cap \GCc_{cof}$ such that
for every object $D$ of $\Dd$ there exists an object $M\in\Mm$ together with a
ho-morphism from $M$ to $D$ which is a level-wise weak equivalence.
\end{enumerate}
Then the triple $(\Dd,\Hh,\Ww)$ is a left Cartan-Eilenberg category and $\Mm$ is a full subcategory of cofibrant models.
The inclusion induces an equivalence of categories
$\pi^h\Mm\stackrel{\sim}{\lra} \Ho(\Dd).$
\end{teo}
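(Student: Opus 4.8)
The plan is to imitate the proof of Theorem \ref{Pcat_es_CE}, replacing the role of the functorial path and strong equivalences by the machinery of ho-morphisms and ho-equivalences developed in this section. First I would observe that by hypothesis (i), the mapping path construction $\Pp^h(-)$ of a ho-morphism between objects of $\Dd$ stays inside $\Dd$; together with Lemma \ref{compo_senzilles} this shows that the factorization apparatus of Section 4.3 (the maps $p_f$, $q_f$, $\iota_f$ of Proposition \ref{inversaho}, and the factorization map $\Phi$ of \ref{def_phi}) restricts to $\Dd$. Hence the localization $\Dd[\Hh^{-1}]$ makes sense and the relative localization $\Mm[\Hh^{-1},\Dd]$ (the full subcategory of $\Dd[\Hh^{-1}]$ on the objects of $\Mm$) is defined. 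Since $\Ss\subset\Hh\subset\overline{\Ww}$ by the Lemma following Definition \ref{hoequivalencies}, the triple $(\Dd,\Hh,\Ww)$ is a category with strong and weak equivalences in the sense of \cite{GNPR}.

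Next I would verify the two defining conditions of a left Cartan-Eilenberg category with left models in $\Mm$. For cofibrancy: every object $M\in\Mm$ lies in $\GCc_{cof}$, so by Corollary \ref{cofibrants_diagrames} (equivalently Proposition \ref{levanta_homorfismos}) the induced map $w_*:\GCc[\Hh^{-1}](M,A)\to\GCc[\Hh^{-1}](M,B)$ is a bijection for every weak equivalence $w$; restricting along the full embedding $\Dd\hookrightarrow\GCc$ shows $M$ is Cartan-Eilenberg cofibrant in $(\Dd,\Hh,\Ww)$. For existence of enough models: this is precisely hypothesis (ii), which provides for each $D\in\Dd$ a ho-morphism $f:M\hto D$ with $M\in\Mm$ that is a level-wise weak equivalence; applying $\Phi_{M,D}$ and using part (3) of Proposition \ref{inversaho} together with Lemma \ref{p1}, the resulting morphism $\Phi_{M,D}(f)=q_f p_f^{-1}$ becomes an isomorphism in $\Dd[\Ww^{-1}]$, since $q_f$ and $p_f$ are both weak equivalences. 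This establishes that $(\Dd,\Hh,\Ww)$ is a left Cartan-Eilenberg category with cofibrant models in $\Mm$.

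Finally, by Theorem 2.3.4 of \cite{GNPR} the inclusion induces an equivalence of categories $\Mm[\Hh^{-1},\Dd]\stackrel{\sim}{\lra}\Ho(\Dd)$. To identify the left-hand side with $\pi^h\Mm$, I would run the arguments of Section 4.4--4.5 relative to $\Dd$: since $\Mm\subset\GCc_{cof}$ and mapping paths of ho-morphisms between objects of $\Mm$ lie in $\Dd$ by (i), Proposition \ref{equivalenceia_hoho} gives that homotopy of ho-morphisms is an equivalence relation on $\GCch(M,M')$, Lemma \ref{compohomos} gives a well-defined composition of homotopy classes, and Proposition \ref{bijeccio_homorfs} shows $\Phi_{M,M'}:[M,M']^h\rightleftarrows\Dd[\Hh^{-1}](M,M'):\Psi_{M,M'}$ are mutually inverse; associativity and functoriality then follow exactly as in Theorem \ref{equiv_homorf_hoequiv}, yielding a category $\pi^h\Mm$ together with an equivalence $\pi^h\Mm\stackrel{\sim}{\lra}\Mm[\Hh^{-1},\Dd]$. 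Composing the two equivalences gives the claimed $\pi^h\Mm\stackrel{\sim}{\lra}\Ho(\Dd)$. The one point that needs genuine care — the main obstacle — is checking that every construction used in Section 4 (the mapping path $\Pp^h$, the liftings $G_u$, $\Gamma_u$, the homotopies $h\wt{+}h'$) that a priori produces objects of $\GCc$ actually stays within $\Dd$; this is exactly what hypothesis (i) is designed to guarantee, but each invocation should be checked, the key case being that $\Pp^h(f)$ for $f:M\hto M'$ with $M,M'\in\Mm$ lies in $\Dd$ and is level-wise cofibrant (so that the relative versions of Proposition \ref{equivalenceia_hoho} and Lemma \ref{compohomos} apply).
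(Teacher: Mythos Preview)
Your proposal is correct and follows essentially the same route as the paper's proof, which is very brief: it observes that by (i) the factorization map $\Phi_{A,B}$ of \ref{def_phi} restricts to $\Dd[\Hh^{-1}]$, invokes Theorem \ref{equiv_homorf_hoequiv} to obtain the equivalence $\pi^h\Mm\simeq\Mm[\Hh^{-1},\Dd]$, and then says the rest is analogous to Theorem \ref{diagramesCE0}. Your expanded outline unpacks exactly these steps.

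One minor over-caution: in your final paragraph you worry that $\Pp^h(f)$ for $f:M\hto M'$ must be level-wise cofibrant in order to apply Propositions \ref{equivalenceia_hoho} and \ref{compohomos}. This is not needed. Those results only require the \emph{source} object to lie in $\GCc_{cof}$, and they are stated and proved in the ambient category $\GCc$; since $\Dd$ is a full subcategory, no ``relative version'' is required. The only role of hypothesis (i) is to ensure that $\Pp^h(f)\in\Dd$, so that $p_f$ and $q_f$ are morphisms of $\Dd$ and the factorization $\Phi_{A,B}(f)=q_f p_f^{-1}$ lands in $\Dd[\Hh^{-1}]$ rather than merely in $\GCc[\Hh^{-1}]$.
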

\begin{proof}
By (i) and Proposition $\ref{inversaho}$, for every pair of objects $A$ and $B$ of $\Dd$, 
the factorization map of $\ref{def_phi}$
restricts to a well defined map
$\Phi_{A,B}:[A,B]^h\lra \Dd[\Hh^{-1}](A,B),$
which preserves weak equivalences. By Theorem $\ref{equiv_homorf_hoequiv}$
this map induces an equivalence
$\Phi:\pi^h\Mm\rightleftarrows \Mm[\Hh^{-1},\Dd]:\Psi.$
The proof follows analogously to that of Theorem $\ref{diagramesCE0}$.
\end{proof}

\section{Multiplicative Mixed Hodge Theory}
We prove a multiplicative version of Beilinson's Theorem for mixed Hodge complexes.
Together with Navarro's functorial construction of mixed Hodge diagrams, this provides functoriality for the mixed Hodge
structures on the homotopy type of complex algebraic varieties.
As an application, we derive the functor of indecomposables of mixed Hodge diagrams,
and give an alternative proof of the fact that the rational homotopy groups of every simply connected complex algebraic
variety inherit functorial mixed Hodge structures.

\subsection{Mixed Hodge diagrams of algebras}
Let $I=\{0\to 1\leftarrow 2\to\cdots\leftarrow s\}$ be an index category of zig-zag type and
fixed length $s$.
We next define the category of diagrams of filtered dga's of type $I$ over $\QQ$. This is a diagram category
whose vertices are categories of filtered and bifiltered dga's defined over $\QQ$ and $\CC$ respectively.
Additional assumptions on the filtrations will lead to the notion of mixed Hodge diagram.
\begin{defi}
Let $\mathbf{A}:I\to\mathsf{Cat}$ be the functor defined by
$$
\xymatrix{
0\ar@{|->}[d]\ar[r]^u&1\ar@{|->}[d]&\ar[l]\cdots\ar[r]&\ar@{|->}[d]s-1&\ar[l]_vs\ar@{|->}[d]\\
\dga{}{\QQ}\ar[r]^{u_*}&\Fdga{}{\CC}&\ar[l]_-{Id}\cdots\ar[r]^-{Id}&\Fdga{}{\CC}&\ar[l]_{v_*}\FFdga{}{\CC}
}
$$
where $u_*(A_\QQ,W)=(A_\QQ,W)\otimes\CC$ is the extension of scalars functor
and $v_*(A_\CC,W,F)=(A_\CC,W)$ is the forgetful functor for the second filtration.
All intermediate functors are identities.
The \textit{category of diagrams of filtered algebras} is the
category of diagrams $\DFAlg{}$ associated with $\mathbf{A}$. Objects of
$\DFAlg{}$ are denoted by $A=((A_\QQ,W)\stackrel{\varphi}{\dashleftarrow\dashrightarrow}(A_\CC,W,F))$.
\end{defi}

\begin{nada}\label{pcatdiag1}Consider the P-category structures of $\Fdga{}{\QQ}$ and $\Fdga{}{\CC}$ given by
the $1$-path together with the classes of $E_1$-quasi-isomorphisms and $E_1$-surjections (see Proposition $\ref{rpstruc_fdga}$).
Likewise, in $\FFdga{}{\CC}$, consider the P-category structure given by
the $(1,0)$-path together with the classes of $E_{1,0}$-quasi-isomorphisms and $E_{1,0}$-surjections (see Proposition $\ref{rpstruc_ffdga}$).
With these choices, the category $\DFAlg{}$ satisfies conditions (I$_1$)-(I$_3$) of $\ref{indexcat}$.
Hence we have the corresponding notions of \textit{ho-morphism} and \textit{ho-equivalence}, as well as the classes $\Hh$ and $\Ww$ of
ho-equivalences and level-wise weak equivalences of $\DFAlg{}$.
Denote by $\Qq$ the class of level-wise quasi-isomorphisms compatible with filtrations.
Since filtrations are regular and exhaustive, we have $\Ww\subset\Qq$.
\end{nada}

\begin{defi}\label{defMHC}
A \textit{mixed Hodge diagram (of dga's over $\QQ$)} is a diagram of filtered algebras
$$A=\left((A_\QQ,W)\stackrel{\varphi}{\dashleftarrow\dashrightarrow}(A_\CC,W,F)\right)$$
satisfying the following conditions:
\begin{enumerate}
\item [($\text{MH}_0$)] The map $\varphi$ is a string of $E_1^W$-quasi-isomorphisms.
The weight filtration $W$ is regular and exhaustive. The Hodge filtration $F$ is biregular and
$H^*(A_\QQ)$ has finite type.
\item [($\text{MH}_1$)] For all $p\in\ZZ$, the differential of $Gr_p^WA_\CC$ is strictly compatible with the filtration $F$.
\item [($\text{MH}_2$)]  For all $n\geq 0$ and all $p\in\ZZ$, the filtration $F$ induced on $H^n(Gr^W_pA_{\CC})$ defines a pure Hodge structure of
weight $p+n$ on $H^n(Gr^W_pA_\QQ)$.
\end{enumerate}
\end{defi}
Denote by $\MHD$ the category of mixed Hodge diagrams of a fixed type $I$,
omitting the index category in the notation. By forgetting the multiplicative structures we recover the category $\MHC$ of mixed Hodge complexes
(see \cite{DeHIII}, 8.1.5).
Axiom
($\mathrm{MH}_2$) implies that for all $n\geq 0$ the triple $(H^n(A_\QQ),\Dec W,F)$ is a mixed Hodge structure,
where $\Dec W$ denotes the d\'{e}calage of the weight filtration (see Definition 1.3.3 of \cite{DeHII}).

The base field $\QQ$ is considered as a mixed Hodge diagram with trivial filtrations and trivial differential.
We will assume that every mixed Hodge diagram $A$ is \textit{0-connected}: the unit map $\eta:\QQ\to A_\QQ$
induces an isomorphism $\QQ\cong H^0(A_\QQ)$. This is a sufficient condition in order to establish the existence of minimal models à la Sullivan.

By Scholie 8.1.9 of \cite{DeHIII} the spectral
sequences associated with the weight and the Hodge filtrations degenerate at the stages $E_2$ and $E_1$ respectively.
Hence the classes $\Ww$ and $\Qq$ coincide in $\MHD$ and
$\Ho(\MHD):=\MHD[\Qq^{-1}]=\MHD[\Ww^{-1}]$.

\begin{defi}
A \textit{mixed Hodge dga} is a filtered dga $(A,d,W)$ over $\QQ$,
together with a filtration $F$ on $A_\CC:=A\otimes_\QQ \CC$, such that for each $n\geq 0$, the triple
$(A^n,\Dec W,F)$
is a mixed Hodge structure and the differentials $d:A^n\to A^{n+1}$ and products
$A^n\otimes A^m\to A^{n+m}$
are morphisms of mixed Hodge structures.
\end{defi}
The cohomology of every mixed Hodge diagram is a mixed Hodge dga with trivial differential.
Following Sullivan's theory one defines a
notion of \textit{minimal mixed Hodge extension} of an augmented mixed Hodge dga 
(see Definition 3.15 of \cite{CG1}).
A mixed Hodge dga is \textit{minimal} if it is the colimit of a sequence of
minimal mixed Hodge extensions starting from the base field $\QQ$
 with trivial mixed Hodge structure.
Every minimal mixed Hodge dga $M$ is free as a filtered dga and satisfies
 $d(W_pM)\subset W_{p-1}(M^+\cdot M^+)$.
Hence it is a Sullivan minimal dga satisfying $\Dec W_pM^n=W_{p-n}M^n$.
Denote by $\MHSA$ the category of minimal mixed Hodge dga's. 

\begin{prop}\label{cofmins}Objects of $\MHSA$ are Cartan-Eilenberg cofibrant and minimal 
in the triple $(\MHD,\Hh,\Qq)$.
\end{prop}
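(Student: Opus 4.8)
\emph{Plan.} The idea is to reduce the statement to the level-wise cofibrancy of a minimal mixed Hodge dga, viewed as a diagram in $\DFAlg{}$, and then to apply the general diagram machinery together with the degeneration of the Hodge spectral sequences. First I would regard an object $M$ of $\MHSA$ as an object of $\DFAlg{}$: it defines the constant zig-zag
$$(M,W)\lra (M\otimes\CC,W)\longleftarrow\cdots\longleftarrow (M\otimes\CC,W,F)$$
all of whose comparison morphisms are the identity. Using axioms ($\mathrm{MH}_0$)--($\mathrm{MH}_2$) and the relation $\Dec W_pM^n=W_{p-n}M^n$ one checks that this diagram lies in $\MHD$, so that $\MHSA$ is a full subcategory of $\MHD$; moreover, since the comparison morphisms are identities, the ho-morphism structure with source $M$ reduces to level-wise data.

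The core step is to prove that every vertex of the diagram $M$ is cofibrant in the relevant P-category: $(M,W)$ in $\Fdga{}{\QQ}$ and $(M\otimes\CC,W)$ in $\Fdga{}{\CC}$ for the $E_1$-structures of Proposition \ref{rpstruc_fdga}, and $(M\otimes\CC,W,F)$ in $\FFdga{}{\CC}$ for the $E_{1,0}$-structure of Proposition \ref{rpstruc_ffdga}. Since $M$ is free as a filtered dga, is a colimit of a sequence of minimal mixed Hodge extensions, and satisfies $d(W_pM)\subset W_{p-1}(M^+\cdot M^+)$ (together with its bifiltered analogue after complexification), one can verify the lifting property against an $E_1$- (resp.\ $E_{1,0}$-) trivial fibration $w$ by a Sullivan-type induction over the stages of $M$: at each stage the decomposability of $d$ forces the obstruction to lifting a new generator to vanish already on the page $E_1$ (resp.\ $E_{1,0}$), where $w$ induces a surjective quasi-isomorphism of (bi)graded dga's, and a homotopy between two lifts is manufactured using the $1$-path $P_1$ (resp.\ the $(1,0)$-path $P_{1,0}$). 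This construction is carried out in detail in \cite{CG1}, and cofibrancy over $\CC$ follows from cofibrancy over $\QQ$ by extension of scalars. Hence $M$ belongs to $\GCc_{cof}$ for $\GCc=\DFAlg{}$.

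It then remains to assemble the pieces. By Corollary \ref{cofibrants_diagrames}, $M$ is Cartan-Eilenberg cofibrant in $(\DFAlg{},\Hh,\Ww)$. By Scholie 8.1.9 of \cite{DeHIII} the weight and Hodge spectral sequences of a mixed Hodge diagram degenerate, so the classes $\Ww$ and $\Qq$ coincide on $\MHD$; since $M\in\MHD$, this gives that $M$ is Cartan-Eilenberg cofibrant in $(\MHD,\Hh,\Qq)$. Finally, for minimality, let $w\colon M\to M$ be a morphism in $\Qq$. Forgetting all filtrations, $w$ is a quasi-isomorphism of Sullivan minimal dga's, hence an isomorphism; and since $w$ is a morphism of the mixed Hodge structures $(M^n,\Dec W,F)$ in each degree, strictness of morphisms of mixed Hodge structures makes its inverse again compatible with $W$ and $F$. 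Thus $w$ is an isomorphism of $\MHD$, so objects of $\MHSA$ are minimal.

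The main obstacle is the core step: one has to match the Hodge-theoretic minimality of $M$ --- a condition on $d$ phrased through $W$, $\Dec W$ and $F$ --- with the lifting property against the trivial fibrations of the transferred P-category structures, namely surjective quasi-isomorphisms on the $E_1$ and $E_{1,0}$ pages; this is exactly the input imported from \cite{CG1}. A secondary point deserving care is the identification $\Qq=\Ww$ inside $\MHD$, which is precisely what allows Corollary \ref{cofibrants_diagrames} to be applied.
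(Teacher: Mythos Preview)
Your proposal is correct and follows essentially the same route as the paper: reduce Cartan--Eilenberg cofibrancy to level-wise cofibrancy via Corollary~\ref{cofibrants_diagrames}, import the latter from \cite{CG1}, and deduce minimality from the fact that a quasi-isomorphism of Sullivan minimal dga's is an isomorphism together with strictness for morphisms of mixed Hodge structures. The paper is slightly terser in that it does not spell out the passage from $(\DFAlg{},\Hh,\Ww)$ to $(\MHD,\Hh,\Qq)$ via the identification $\Ww=\Qq$ on $\MHD$, whereas you make this explicit; conversely, the paper checks minimality for an arbitrary weak equivalence $w\colon M\to N$ between objects of $\MHSA$ rather than only endomorphisms, but your endomorphism formulation is what the Cartan--Eilenberg definition actually requires, and the argument is the same either way.
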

\begin{proof}
Let $M$ be a minimal mixed Hodge dga. By Corollary $\ref{cofibrants_diagrames}$, for $M$ to be cofibrant it suffices to see
that $(M,W)$ and $(M_\CC,W,F)$ are cofibrant in the categories of filtered and bifiltered dga's respectively.
This follows from Theorem 2.12 of \cite{CG1}, which is an adaptation of the classical proof (see Lemma 12.4 of \cite{FHT}),
to the filtered setting.
To see that objects of $\MHSA$ are minimal it remains to prove that every weak equivalence $w:M\to N$
is an isomorphism. Indeed, since both $M$ and $N$ are Sullivan minimal, $w$ is an isomorphism of dga's
(see Proposition 7.6 of \cite{BG}). Since $M$ and $N$ carry mixed Hodge structures,
 the map $w$ is strictly compatible with $W$ and $F$ (see Theorem 2.3.5 of \cite{DeHII}). Therefore
 $w:(M,W)\to (N,W)$ is an isomorphism of filtered dga's, and $w_\CC:(M_\CC,W,F)\to (N_\CC,W,F)$ is an isomorphism of bifiltered dga's.
\end{proof}

\begin{teo}\label{MHDCE}
The triple $(\MHD,\Hh,\Qq)$ is a left Cartan-Eilenberg category and $\MHSA$ is a full subcategory of cofibrant minimal models.
The inclusion induces an
equivalence of categories
$$\pi^h\MHSA\stackrel{\sim}{\lra} \Ho\left(\MHD\right).$$
\end{teo}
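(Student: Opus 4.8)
The plan is to obtain the statement as an instance of Theorem~\ref{diagramesCE}, taking as ambient diagram category $\GCc=\DFAlg{}$ with the level-wise P-category structures fixed in~\ref{pcatdiag1} (the $1$-path and $E_1$-quasi-isomorphisms on $\Fdga{}{\QQ}$ and $\Fdga{}{\CC}$, the $(1,0)$-path and $E_{1,0}$-quasi-isomorphisms on $\FFdga{}{\CC}$), which satisfies conditions (I$_1$)--(I$_3$) of~\ref{indexcat} as recorded in~\ref{pcatdiag1}, with $\Dd=\MHD$ the full subcategory of mixed Hodge diagrams and $\Mm=\MHSA$. By Proposition~\ref{cofmins}, every object of $\MHSA$ is cofibrant in the categories of filtered and bifiltered dga's vertex by vertex, hence lies in $(\DFAlg{})_{cof}$, so $\MHSA\subset\MHD\cap(\DFAlg{})_{cof}$; the same proposition shows that its objects are moreover minimal in $(\MHD,\Hh,\Qq)$. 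It thus remains to verify hypotheses (i) and (ii) of Theorem~\ref{diagramesCE} for the pair $(\MHD,\MHSA)$ and to match the weak equivalences with $\Qq$.

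For (ii) I would appeal to the construction of minimal models of mixed Hodge diagrams of~\cite{CG1}: to every mixed Hodge diagram $A$ one associates a minimal mixed Hodge dga $M$ --- which is itself a mixed Hodge diagram --- together with an $E_1^W$-quasi-isomorphism $M\to A_\QQ$ compatible with $W$, whose scalar extension $M_\CC\to A_\QQ\otimes\CC$ is compatible with $F$. Since $(M,W)$ and $(M_\CC,W,F)$ are cofibrant (Theorem~2.12 of~\cite{CG1}), one builds the remaining components of a diagram map $M\hto A$ and the connecting homotopies by lifting through the $E_1$-quasi-isomorphisms of $A$ with Proposition~\ref{bijecciocofibrants}, exactly as in the proof of Theorem~\ref{diagramesCE0}; this produces a ho-morphism $M\hto A$ which is a level-wise weak equivalence, giving (ii).

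The main obstacle is (i): the mapping path $\Pp^h(f)$ of a ho-morphism $f:A\hto B$ between mixed Hodge diagrams must again be a mixed Hodge diagram. Vertex by vertex $\Pp(f_i)=A_i\times_{B_i}P(B_i)$ for the relevant $r$-path $P$, so $\Pp^h(f)$ is a diagram of filtered algebras with comparison morphisms $\psi_u$ assembled from the $\varphi_u$ and the homotopies $F_u$. By Lemma~\ref{factoritza_brown} the projection $p:\Pp^h(f)\to A$ is a level-wise trivial fibration, hence a level-wise $E_1^W$- (resp.\ $E_{1,0}$-) quasi-isomorphism, and by Proposition~1.1.11 of~\cite{DeHII} it is strictly compatible with $W$ and $F$. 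Regularity and exhaustiveness of $W$, biregularity of $F$, and finiteness of $H^*$ survive both the formation of $P(B_i)=B_i\otimes\Lambda(t,dt)$ with the convolution filtration and the pullback along $p$; together with the identity $p_j\psi_u=\varphi_up_i$ and a two-out-of-three argument this yields $(\mathrm{MH}_0)$. Axioms $(\mathrm{MH}_1)$ and $(\mathrm{MH}_2)$ --- strictness of the differential of $\Gr_p(\Pp(f_i)_\CC)$ with respect to $F$, and purity of the induced Hodge structure on $H^n(\Gr_p(\Pp(f_i)_\CC))$ --- should transport from $A$ along $p$, since $p$ is a filtered quasi-isomorphism for both $W$ and $F$; the genuinely delicate point, which I expect to require the most care, is to control the interaction of the $\sigma[1]$-part of the $r$-path filtration with these strictness and purity conditions at the level of the $W$-graded pieces.

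Once (i) and (ii) are established, Theorem~\ref{diagramesCE} gives that $(\MHD,\Hh,\Ww)$ is a left Cartan--Eilenberg category with $\MHSA$ as a full subcategory of cofibrant models, and that the inclusion induces an equivalence $\pi^h\MHSA\xra{\sim}\Ho(\MHD)$. By Scholie~8.1.9 of~\cite{DeHIII} the weight and Hodge spectral sequences of a mixed Hodge diagram degenerate, so $\Ww$ and $\Qq$ coincide on $\MHD$ and $\Ho(\MHD)=\MHD[\Qq^{-1}]$; combined with the minimality of the objects of $\MHSA$ recorded above, this is exactly the assertion of the theorem.
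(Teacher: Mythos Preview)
Your overall strategy---apply Theorem~\ref{diagramesCE} with $\GCc=\DFAlg{}$, $\Dd=\MHD$, $\Mm=\MHSA$---is exactly the paper's, and your treatments of $(\mathrm{MH}_0)$ and of hypothesis (ii) are fine (for (ii) the paper simply invokes Theorem~3.17 of \cite{CG1}, which packages the lifting you describe).

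The gap is in your verification of $(\mathrm{MH}_1)$ and $(\mathrm{MH}_2)$ for the mapping path. You propose to transport these conditions from $A$ along the projection $p:\Pp^h(f)\to A$, arguing that $p$ is a filtered quasi-isomorphism. But in the P-category structure of~\ref{pcatdiag1}, $p$ is only an $E_1^W$-quasi-isomorphism (and $E_{1,0}$ on the bifiltered vertex): it induces an isomorphism on $E_2$, not on $E_1=H^*(\Gr^W)$. Conditions $(\mathrm{MH}_1)$ and $(\mathrm{MH}_2)$ are statements about the $E_1$-page---strictness of $d$ on $\Gr^W_p$ with respect to $F$, and purity of $H^n(\Gr^W_p)$---so they do not descend along an $E_1^W$-quasi-isomorphism. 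Concretely, $\Gr^W_p P_1(B_\CC)=\Gr^W_p B_\CC\otimes\Lambda(t)\oplus \Gr^W_{p+1}B_\CC\otimes\Lambda(t)\,dt$ with differential induced only by $d_B$ (the $dt$-term drops weight), so $H^*(\Gr^W_p\Pp(f_\CC))$ contains, besides $H^*(\Gr^W_pA_\CC)$, many copies of $H^*(\Gr^W_pB_\CC)$ and of $H^{*-1}(\Gr^W_{p+1}B_\CC)$; the projection $p$ kills these, so it is not an isomorphism on $E_1$. The paper's argument is precisely this direct computation: once you see that $H^n(\Gr^W_p\Pp(f_\CC))$ splits (with its $F$-filtration, which is trivial on $\Lambda(t,dt)$ by Definition~\ref{r0path}) as a sum of pieces coming from $A$ \emph{and} $B$, each of which already satisfies $(\mathrm{MH}_1)$ and $(\mathrm{MH}_2)$ with the correct weight, both axioms follow. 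So the missing idea is not a delicate interaction to be controlled but an explicit decomposition that uses the mixed Hodge hypotheses on $B$ as well as on $A$.
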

\begin{proof}
We show that the conditions of Theorem $\ref{diagramesCE}$ are satisfied,
with $\GCc=\Gamma\mathbf{A}$, $\Dd=\MHD$ and $\Mm=\MHSA$.
Let us prove (i). By Lemma $\ref{inversaho}$,
given a ho-morphism of mixed Hodge diagrams $f:A\hto B$, the level-wise projection
$p:\Pp^h(f)\to A$ is in $\Ww$. Therefore condition ($\text{MH}_0$)
follows from the two out of three property of $E_1$-quasi-isomorphisms.
Since $Gr_p^WP_1(B_\CC)=Gr_p^WB_\CC\otimes\Lambda(t)\oplus Gr_{p+1}^WB_\CC\otimes\Lambda(t)dt$, it follows that
$$H^n(Gr_p^W\Pp(f_\CC))\cong H^n(Gr_p^WA_\CC)\oplus H^n(Gr_p^WB_\CC)\oplus H^{n-1}(Gr_{p+1}^WB_\CC)$$
for all $n\geq 0$ and all $p\in\ZZ$.
Hence conditions ($\text{MH}_1$) and ($\text{MH}_2$) are satisfied.

By Theorem 3.17 of \cite{CG1}, for every cohomologically connected mixed Hodge diagram $A$ there exists a 
minimal mixed Hodge dga $M$ and a ho-morphism $f:M\hto A$ which is a level-wise quasi-isomorphism.
By Proposition $\ref{cofmins}$, $M$ is cofibrant and minimal. Hence (ii) is satisfied.
\end{proof}

\begin{cor}\label{mhstipohomotopia}
The rational homotopy type of every complex algebraic variety carries functorial mixed Hodge structures:
there is a functor $\Hh dg:\Sch{}{\CC}\to \pi^h\MHSA$ whose composition with the forgetful functor 
$U_\QQ:\pi^h\MHSA\to \Ho(\dga{}{\QQ})$
sends $X$ to a minimal model $M_X$ of its Sullivan-de Rham algebra of forms in the homotopy category. 
\end{cor}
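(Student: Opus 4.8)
The plan is to run Navarro's functorial construction of mixed Hodge diagrams through the equivalence of Theorem \ref{MHDCE}. First I would invoke \cite{Na}, which provides a functor $\Hh dg_{\MHD}:\Sch{}{\CC}\to\Ho(\MHD)$, natural in the variety $X$, together with the identification of its rational component: the composite of $\Hh dg_{\MHD}$ with the functor $\Ho(\MHD)\to\Ho(\dga{}{\QQ})$ induced by $A\mapsto A_\QQ$ (forgetting the weight filtration) is the rational homotopy type of $X$; that is, it sends $X$ to its Sullivan--de Rham algebra of forms, equivalently to a minimal model $M_X$. The existence and naturality of this functor, and the compatibility of its rational component with the classical Sullivan--de Rham functor, are the external input.

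Next, by Theorem \ref{MHDCE} the inclusion $\pi^h\MHSA\to\Ho(\MHD)$ is an equivalence of categories; I fix a quasi-inverse $\Psi:\Ho(\MHD)\to\pi^h\MHSA$ and set $\Hh dg:=\Psi\circ\Hh dg_{\MHD}$, a functor $\Sch{}{\CC}\to\pi^h\MHSA$. It remains to identify $U_\QQ\circ\Hh dg$. Recall, from the proof of Theorem \ref{MHDCE} (condition (ii) of Theorem \ref{diagramesCE}), that for every mixed Hodge diagram $A$ there is a minimal mixed Hodge dga $M_A$ together with a ho-morphism $M_A\hto A$ which is a level-wise weak equivalence, and one may take $\Psi(A)=M_A$. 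On the $\QQ$-component this ho-morphism is an $E_1$-quasi-isomorphism of filtered dga's $(M_A,W)\to(A_\QQ,W)$, hence in particular a quasi-isomorphism of dga's over $\QQ$ (by \ref{pcatdiag1}, $\Ww\subset\Qq$). Since moreover a homotopy of ho-morphisms restricts on $\QQ$-components to a dga homotopy, $U_\QQ$ is well defined on $\pi^h\MHSA$ and carries $\Psi(A)$ to $A_\QQ$ in $\Ho(\dga{}{\QQ})$. Applying this with $A=\Hh dg_{\MHD}(X)$ gives $U_\QQ(\Hh dg(X))\cong M_X$, naturally in $X$, which is the assertion.

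The substantial content here is really Navarro's theorem, which I import; within the framework developed in this paper the only thing to verify is the compatibility of the equivalence $\pi^h\MHSA\simeq\Ho(\MHD)$ with the forgetful functors to $\Ho(\dga{}{\QQ})$, and this collapses to the elementary fact recorded in \ref{pcatdiag1} that an $E_1$-quasi-isomorphism of regular exhaustively filtered dga's over $\QQ$ is a quasi-isomorphism. The one formal point I would keep an eye on is that post-composition with $\Psi$ is precisely what upgrades the construction to a functor valued in $\pi^h\MHSA$, so I must use that Navarro's construction is already functorial at the level of $\Ho(\MHD)$ (not merely up to coherent homotopy); this is indeed what \cite{Na} provides. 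Beyond this I expect no obstacle.
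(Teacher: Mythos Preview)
Your approach is essentially the paper's: obtain a functor $\Sch{}{\CC}\to\Ho(\MHD)$ from the literature, then compose with a quasi-inverse of the equivalence of Theorem~\ref{MHDCE}, and check that the $\QQ$-component recovers the Sullivan--de Rham homotopy type. The verification you sketch of the compatibility with $U_\QQ$ is fine.

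The one place where the paper is more careful than your proposal is the provenance of the functor $\Sch{}{\CC}\to\Ho(\MHD)$. You attribute it directly to \cite{Na}, but the paper cites Navarro only for the functor on $\mathbf{V}^2(\CC)$ (smooth compactifications $U\subset X$ with normal crossings complement), and then invokes the cohomological descent structure on $\MHD$ via the Thom--Whitney simple (Theorem~4.4 of \cite{CG1}) together with the extension criterion of \cite{GN} to extend this to a functor on all of $\Sch{}{\CC}$ valued in $\Ho(\MHD)$. This descent step is precisely what guarantees functoriality for arbitrary morphisms of (possibly singular) varieties, not just those lifting to morphisms of compactifications; it is the point you flag at the end (``Navarro's construction is already functorial at the level of $\Ho(\MHD)$''), and it deserves the explicit citation the paper gives it rather than being absorbed into a single appeal to \cite{Na}.
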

\begin{proof}
Deligne's construction of mixed Hodge structures on the cohomology of smooth algebraic varieties \cite{DeHII} can be restated as having a functor
$\Hh dg:\mathbf{V}^2(\CC)\to \MHC$
sending every smooth compactification $U\subset X$ of algebraic varieties over $\CC$ with $D=X-U$ a normal crossings divisor, to a
mixed Hodge complex, which computes the cohomology of $U$. 

A multiplicative version of Deligne's functor  
with values in the category of mixed Hodge diagrams $\Hh dg:\mathbf{V}^2(\CC)\to \MHD$ is given is 
 Theorem 8.1.5 of (\cite{Na}.

The category of mixed Hodge diagrams admits a cohomological descent structure via the Thom-Whitney simple functor
(see Theorem 4.4 of \cite{CG1}).
Hence the extension criterion of functors of \cite{GN} gives a functor
$\Hh dg:\Sch{}{\CC}\to \Ho\left(\MHD\right)$ whose rational component is the Sullivan-de Rham functor.
The result follows from Theorem $\ref{MHDCE}$.
\end{proof}

\subsection{Homotopy and indecomposables}

Given an augmented dga $\eps:A\to \kk$ over $\kk$, denote by $A^+$ the kernel of $\eps$.
The quotient graded vector space $Q(A)=A^+/(A^+\cdot A^+)$ with the induced differential is called the \textit{complex of indecomposables} of $A$.
If $A$ is a (bi)filtered dga and the augmentation is compatible with filtrations, then $Q(A)$ is also (bi)filtered.
This defines a functor $Q$ sending augmented (bi)filtered dga's to (bi)filtered complexes of vector spaces over $\kk$.

\begin{defi}
An \textit{augmented mixed Hodge diagram} is a mixed Hodge diagram $A$ together with a morphism $\eps:A\to \QQ$.
\end{defi}

Denote by $\MHD_*$ the category of augmented mixed Hodge diagrams whose morphisms are compatible with the augmentations.

\begin{defi}
The \textit{complex of indecomposables} of an augmented mixed Hodge diagram $A$
is the diagram of filtered complexes defined by 
$$Q(A)=\left((Q(A_\QQ),W)\stackrel{Q(\varphi)}{\dashleftarrow\dashrightarrow}(Q(A_\CC),W,F)\right).$$
\end{defi}
This defines a functor 
$Q:\MHD_*\lra \DFCx,$
where $\DFCx$ denotes the category of diagrams of filtered complexes, obtained by forgetting the multiplicative structures of $\DFAlg{}$
(see Definition 4.1 of \cite{CG2}).
Every minimal mixed Hodge dga $M$ admits a canonical augmentation, and for all $n\geq 0$, the triple
$(Q(M_\QQ)^n,\Dec W,F)$ is a mixed Hodge structure. In particular $Q(M)$ is a mixed Hodge complex with trivial differential.
We have a functor $$\Dec_W\circ Q:\MHSA\lra \mathbf{G}^+(\MHS)$$
with values in the category of graded mixed Hodge structures,
where $\Dec_W$ is defined by d\'{e}calage of the weight filtration.

\begin{defi}
Let $n\geq 0$ and let $A$ be a 1-connected mixed Hodge diagram. The \textit{$n$-th homotopy group of $A$} is the mixed Hodge structure
defined by $\pi^n(A)=\Dec_W\circ Q(M)$, where $M$ is a cofibrant minimal model of $A$.
\end{defi}
We next check that this definition is correct, in the sense that it is functorial,
and does not depend on the chosen minimal model.

\begin{rmk}
Since every 0-connected mixed Hodge diagram has a cofibrant minimal model,
its $n$-th homotopy groups can be defined in the same manner.
In doing so, one loses functoriality, since
dga's need not induce the same morphism of indecomposables, unless the homotopy is
augmented.
For 1-connected dga's,
every homotopy of augmented morphisms is augmented. Hence,
the homotopy is independent of the augmentation.
\end{rmk}

\begin{defi}\label{augm_homotopy}
Let $f,g:A\hto B$ be ho-morphisms of mixed Hodge diagrams. A homotopy $h:A\hto P(B)$ from $f$ to $g$ is said to be \textit{augmented}
if the following diagram commutes.
$$\xymatrix{
A\ar[d]_h\ar[r]^\eps&\QQ\ar[d]^\iota\\
P(B)\ar[r]^{P(\eps)}&P(\QQ)
}$$
\end{defi}

\begin{lem}\label{preserva_equivs_htpho}Every augmented homotopy $h:A\hto P(B)$ between ho-morphisms of 1-connected
mixed Hodge diagrams induces a homotopy 
$$\inte h:Q(A)\hto Q(B)[-1].$$
between ho-morphisms of diagrams of filtered complexes.
\end{lem}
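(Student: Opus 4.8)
\emph{Proof sketch.} The map $\inte h$ will be obtained from the classical fibre integration along the interval, applied level-wise to $h$ and to its coherence data, and then reduced modulo decomposables. For any commutative differential graded algebra $C$ over a field $\kk$ of characteristic zero, the $\kk$-linear map
$$\inte\colon P(C)=C\otimes\Lambda(t,dt)\lra C,\qquad a(t)+b(t)\,dt\longmapsto \int_0^1 b(t)\,dt$$
has degree $-1$, is natural in $C$, and realizes a chain homotopy between $\delta^0_C$ and $\delta^1_C$, viewed as morphisms of complexes $P(C)\to C$ (Stokes' formula, up to signs). Moreover, by the definition of the weight filtration of the $1$-path $P_1$ (Definition $\ref{rpath}$) the $dt$-component of $W_pP_1(C)$ lies in $W_{p+1}C$, and likewise for the $(1,0)$-path (Definition $\ref{r0path}$); hence $\inte$ is compatible with the $E_1$-filtration (resp.\ $E_{1,0}$-bifiltration) once the weight filtration of the target is shifted accordingly, and this shift, together with the degree drop, is exactly what the symbol $Q(B)[-1]$ records.

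I would first treat the level-wise part. Since $h=(h_i,H_u)$ is an augmented homotopy between the ho-morphisms $f=\delta^0_Bh$ and $g=\delta^1_Bh$ (which are themselves augmented, by naturality of $\delta^k$ together with the augmentation condition of Definition $\ref{augm_homotopy}$, so that $Q(f)$ and $Q(g)$ are defined), each $h_i\colon A_i\to P(B_i)$ is a morphism of (bi)filtered dga's carrying $A_i^+$ into $P(B_i)^+=B_i^+\otimes\Lambda(t,dt)$. The essential algebraic point is that $\inte\circ h_i$ sends $A_i^+\cdot A_i^+$ into $B_i^+\cdot B_i^+$: writing $h_i(a)=\alpha(t)+\beta(t)\,dt$ and $h_i(a')=\alpha'(t)+\beta'(t)\,dt$ with all coefficients in $B_i^+$, one gets $h_i(aa')=\alpha\alpha'+(\alpha\beta'\pm\beta\alpha')\,dt$, whose integral lies in $B_i^+\cdot B_i^+$. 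Therefore $\inte\circ h_i$ descends to a degree $-1$ map $(\inte h)_i\colon Q(A_i)\to Q(B_i)$, filtered after the $[-1]$ shift; and composing the homotopy relation for $\inte$ with $h_i$ (using $h_id=dh_i$) yields, after projecting to indecomposables, that $(\inte h)_i$ is a chain homotopy from $Q(f)_i$ to $Q(g)_i$.

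I would then integrate the coherence morphisms. For $u\colon i\to j$, the morphism $H_u\colon A_i\to P^2(B_j)$ of Definition $\ref{hohomotopies}$ is, in one of its two interval variables, a homotopy between $P(\delta^0_{B_j})H_u=F_u$ and $P(\delta^1_{B_j})H_u=G_u$, and, in the other, a homotopy between $\delta^0_{P(B_j)}H_u=h_j\varphi_u$ and $\delta^1_{P(B_j)}H_u=\psi_uh_i$. Integrating $H_u$ along the latter variable while keeping the former and reducing modulo decomposables exactly as above produces a morphism $(\inte h)_u$; the Stokes identity in that variable, the four relations of Definition $\ref{hohomotopies}$, the naturality of $\inte$, and the behaviour of $\inte$ under the level-wise composition of Lemma $\ref{compo_senzilles}$ translate into the defining relations of Definitions $\ref{homorfismes}$ and $\ref{hohomotopies}$ read in $\DFCx$. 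Thus the family $\inte h=\bigl((\inte h)_i,(\inte h)_u\bigr)$ is a well-defined ho-morphism $Q(A)\hto Q(B)[-1]$ of diagrams of filtered complexes, and it is a homotopy between $Q(f)$ and $Q(g)$.

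The main obstacle is precisely this last verification: one must keep careful track of which of the two interval coordinates is integrated at each stage, check the compatibility of the integrated family with the comparison morphisms and with the endpoint conditions, and confirm that the $dt$-part always lands in the correct step of the weight filtration (and, over $\CC$, of the Hodge filtration), so that $Q(B)[-1]$ is indeed the right shifted target. The hypothesis that $A$ and $B$ be $1$-connected is used only to guarantee that the augmentations, hence $Q(f)$, $Q(g)$ and the whole construction, do not depend on choices (cf.\ the remark preceding Definition $\ref{augm_homotopy}$), and that $Q(B)[-1]$ with its shifted filtration is again a legitimate diagram of filtered complexes.
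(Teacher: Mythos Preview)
Your treatment of the level-wise homotopies $\inte h_i$ and of the passage to indecomposables via the augmentation is correct and coincides with the paper's argument. The gap is in your handling of the second-order data $H_u$.

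You propose to integrate $H_u\colon A_i\to P^2(B_j)$ along \emph{one} of the two interval variables, ``keeping the former''. This produces a degree $-1$ map $A_i\to P(B_j)$, still carrying a path variable. But the target of the lemma is $Q(B)[-1]$, so the homotopy $\inte h$ is meant in the purely \emph{additive} sense (as in \cite{CG2}, Definition~3.8): a homotopy between ho-morphisms of diagrams of filtered complexes consists of degree $-1$ maps $Q(A_i)\to Q(B_i)$ together with degree $-2$ secondary homotopies $Q(A_i)\to Q(B_j)$. Your single integral does not land in $Q(B_j)$ and has the wrong degree for this role; nor can it be interpreted as a ho-morphism in the $P$-framework, since $(\inte h)_i$ is not a chain map.

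The fix is to integrate along \emph{both} variables. The paper sets $\inte h:=\bigl(\inte h_i,\ \inte\!\inte H_u\bigr)$ and verifies directly the secondary relation
\[
\inte\!\inte H_u\, d \;-\; d\,\inte\!\inte H_u \;=\; \inte G_u-\inte F_u \;+\; \inte h_j\varphi_u-\varphi_u\inte h_i,
\]
obtained by applying Stokes in each variable in turn and using the four boundary identities from Definition~\ref{hohomotopies}. This is precisely the defining equation of a homotopy of ho-morphisms in the additive diagram category, and it is the computation your sketch elides. Once this identity is in hand, the augmentation hypothesis gives $\inte h(A^+)\subset B^+$ and $\inte h(A^+\cdot A^+)\subset B^+\cdot B^+$ exactly as you argue, and the descent to $Q$ follows.
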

\begin{proof}
A homotopy $h:A\hto P(B)$ from $f=(f_i,F_u)$ to $g=(g_i,G_u)$ is given by a family of homotopies $h_i:A_i\to P(B_i)$ from $f_i$ to $g_i$,
together with second homotopies $H_u:A_i\to P^2(B_j)$ satisfying the conditions of Definition $\ref{hohomotopies}$.
Consider the homogeneous linear map 
$$\inte :P(A_i)=A_i\otimes\Lambda(t,dt)\longrightarrow A_i$$
of degree $-1$ defined by
$a\otimes t^i\mapsto 0$ and $a\otimes t^i dt\mapsto (-1)^{|a|} {{a}\over {i+1}}$ (see \cite{GM}, X.10.3).
Then
$$d\inte h_i+\inte dh_i=g_i -f_i.$$
Therefore the map $\int^0_1h_i:A_i\to B_i[-1]$ is a homotopy of complexes. Likewise, we find that
$$\inte \inte H_ud-d\inte \inte H_u=\inte G_u-\inte F_u+\inte h_j\varphi_u-\varphi_u\inte h_i.$$
Hence the family of pairs $$\inte h:=(\inte h_i,\inte\inte H_u)$$
is a homotopy of ho-morphisms of mixed Hodge complexes from $f$ to $g$ (see Definition 3.8 of \cite{CG2}).
Since $h$ is augmented, this homotopy satisfies
$$\inte h (A^+)\subset B^+\text{ and }\inte h(A^+\cdot A^+)\subset B^+\cdot B^+.$$
Therefore it induces a homotopy at the level of complexes of indecomposables.
\end{proof}

\begin{teo}\label{leftderivedMHD}
The functor of indecomposables admits a left derived functor
$$\mathbb{L}Q:\Ho\left(\MHD^1_*\right)\lra \Ho\left(\MHC\right).$$
The composition of functors
$$\Ho\left(\MHD^1\right)\stackrel{\sim}{\longleftarrow}\Ho\left(\MHD^1_*\right)\xra{\mathbb{L}Q}\Ho\left(\MHC\right)
\xra{\Dec_W\circ H}\mathbf{G}^+(\MHS)$$
defines a functor
$$\pi^*:\Ho\left(\MHD^1\right)\lra\mathbf{G}^+(\MHS)$$
which associates to every 1-connected mixed Hodge diagram $A$, a graded mixed Hodge structure $\pi^*(A)=(Q(M_\QQ),\Dec W,F)$,
where $M$ is a cofibrant minimal model of $A$.
\end{teo}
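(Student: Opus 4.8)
The plan is to realize $\mathbb{L}Q$ as the left derived functor of $Q$ with respect to the Cartan--Eilenberg structure of Theorem $\ref{MHDCE}$, the decisive homotopical input being Lemma $\ref{preserva_equivs_htpho}$. First I would observe that, restricted to $1$-connected objects, the foregoing constructions yield a left Cartan--Eilenberg category $(\MHD^1_*,\Hh,\Qq)$ whose full subcategory of cofibrant minimal models is the category $\MHSA^1$ of $1$-connected minimal mixed Hodge dga's, each equipped with its canonical augmentation; here one uses that the mapping path of a ho-morphism between $1$-connected diagrams is again $1$-connected (it is $E_1$-quasi-isomorphic to its source via the projection $p$ of Proposition $\ref{inversaho}$), so that Theorem $\ref{diagramesCE}$ applies and the inclusion induces an equivalence $\pi^h\MHSA^1\stackrel{\sim}{\lra}\Ho(\MHD^1_*)$. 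Since a $1$-connected mixed Hodge diagram carries a unique augmentation and every morphism between such diagrams is automatically augmented, the forgetful functor $\MHD^1_*\to\MHD^1$ is an isomorphism of categories; combined with the Remark preceding Definition $\ref{augm_homotopy}$ (homotopies between $1$-connected augmented ho-morphisms are augmented), this produces the equivalence $\Ho(\MHD^1)\stackrel{\sim}{\longleftarrow}\Ho(\MHD^1_*)$ of the statement.

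Next I would check that $Q$ descends to a functor on homotopy classes of ho-morphisms between objects of $\MHSA^1$. Given a ho-morphism $f=(f_i,F_u):M\hto N$ of $1$-connected minimal mixed Hodge dga's, each $f_i$ is augmented since it is unital, and each $F_u$, being a homotopy between the augmented morphisms $f_j\varphi_u$ and $\varphi_uf_i$, is augmented by $1$-connectedness; hence $Q(f)$ is a well-defined ho-morphism of mixed Hodge complexes. By Lemma $\ref{preserva_equivs_htpho}$, applying $Q$ to an augmented homotopy of ho-morphisms produces a homotopy $\inte h$ of ho-morphisms of mixed Hodge complexes (up to the inessential shift $[-1]$), so $[f]\mapsto[Q(f)]$ is well defined. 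As $Q(M)$ has trivial differential it is a cofibrant object of $\MHC$ (a minimal mixed Hodge complex in the sense of \cite{CG2}), so the Cartan--Eilenberg structure of $\MHC$ yields a functor $\ov{Q}\colon\pi^h\MHSA^1\to\Ho(\MHC)$, and I would define $\mathbb{L}Q$ as the composite of $\ov{Q}$ with a quasi-inverse of $\pi^h\MHSA^1\stackrel{\sim}{\to}\Ho(\MHD^1_*)$. To see that this is the left derived functor, I would use that the cofibrant resolution $M\hto A$ of each $A$ induces a morphism $Q(M)\to Q(A)$ in $\Ho(\MHC)$, whence a natural transformation $\mathbb{L}Q\circ\gamma\Rightarrow\gamma\circ Q$; since weak equivalences between objects of $\MHSA^1$ are isomorphisms (Proposition $\ref{cofmins}$), $Q$ carries them to isomorphisms, and the required terminality then follows from the general theory of derived functors over Cartan--Eilenberg categories \cite{GNPR}.

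Finally, I would build $\pi^*$ by composing $\mathbb{L}Q$ with the equivalence $\Ho(\MHD^1)\stackrel{\sim}{\longleftarrow}\Ho(\MHD^1_*)$ and with the functor $\Dec_W\circ H\colon\Ho(\MHC)\to\mathbf{G}^+(\MHS)$. For a cofibrant minimal model $M$ of a $1$-connected mixed Hodge diagram $A$, the complex $Q(M)$ has trivial differential, so $H(Q(M))=Q(M)$, and, as recalled before the statement, $(Q(M_\QQ)^n,\Dec W,F)$ is a mixed Hodge structure for every $n\geq 0$; hence $\pi^*(A)=(Q(M_\QQ),\Dec W,F)$, as asserted. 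The main obstacle I anticipate lies in the second paragraph: showing that $Q$ transforms homotopies of ho-morphisms into homotopies of ho-morphisms of mixed Hodge complexes, which is exactly the content of Lemma $\ref{preserva_equivs_htpho}$ and hinges on the automatic augmentedness of homotopies in the $1$-connected setting. Once this homotopy invariance is secured, the existence and the universal property of $\mathbb{L}Q$ become formal consequences of the Cartan--Eilenberg formalism.
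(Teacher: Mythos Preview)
Your approach is essentially the paper's: both hinge on Lemma~\ref{preserva_equivs_htpho} to see that $Q$ respects the class $\Hh$, and then invoke the Cartan--Eilenberg machinery of \cite{GNPR} to obtain $\mathbb{L}Q$. The paper is terser---it directly cites Lemma~3.1.3 of \cite{GNPR} once it observes that $Q$ sends $\Hh$ to weak equivalences---whereas you unpack the construction of $\ov{Q}$ on $\pi^h\MHSA^1$ by hand, but the content is the same.

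There is one slip. You claim that a $1$-connected mixed Hodge diagram carries a \emph{unique} augmentation, making the forgetful functor $\MHD^1_*\to\MHD^1$ an isomorphism of categories. This is false as stated: in the paper's conventions, $1$-connected means only \emph{cohomologically} $1$-connected ($H^0\cong\QQ$ and $H^1=0$), and such an $A_\QQ$ may have $A_\QQ^0\neq\QQ$ and hence several dga augmentations (for instance $\QQ[x]$ with $|x|=0$ and $dx\neq 0$). The paper avoids this by arguing at the level of cofibrant minimal models: objects of $\MHSA^1$ are strictly connected, so they carry a canonical augmentation, and every homotopy between morphisms of $1$-connected \emph{minimal} dga's is augmented (Lemma~12.5 of \cite{GM}). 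This gives an equivalence $\pi^h\MHSA^1\to\pi^h\MHSA^1_*$ and hence $\Ho(\MHD^1_*)\stackrel{\sim}{\to}\Ho(\MHD^1)$, which is all that is needed. Your overall strategy is sound once you replace the incorrect claim about $\MHD^1_*\to\MHD^1$ with this argument via minimal models.
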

\begin{proof}
By Theorem $\ref{MHDCE}$ the triple $(\MHD,\Hh,\Qq)$ is a Cartan-Eilenberg category.
By Lemma $\ref{preserva_equivs_htpho}$ the functor $Q:\MHD_*\lra \DFCx,$ sends morphisms in $\Hh$ to level-wise weak equivalences.
By Lemma 3.1.3 of \cite{GNPR}, it admits a left derived functor
$\mathbb{L}Q:\Ho\left(\MHD^1_*\right)\to \DFCx[\Ww^{-1}]$ defined by $\mathbb{L}Q(A)=Q(M_A)$,
where $M$ is a cofibrant minimal model of $A$.
Since $Q(M)$ is a mixed Hodge complex,
$\mathbb{L}Q$ takes values in $\Ho\left(\MHC\right).$
It remains to show that the forgetful functor 
$\Ho\left(\MHD^1_*\right)\to \Ho\left(\MHD^1\right)$ is an equivalence of categories. 
This follows as in the case of 1-connected dga's, from the fact that every 
homotopy between morphisms of 1-connected minimal dga's is augmented
(see Lemma 12.5 of \cite{GM}).
\end{proof}

\begin{cor}[\cite{Mo}, Thm. 9.1, \cite{Na}, Thm. 9.3.2]\label{mhd_mhs}
The rational homotopy groups of every simply connected complex algebraic variety carry functorial mixed Hodge structures.
\end{cor}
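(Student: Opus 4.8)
The plan is to assemble the statement from the two functorial constructions already established. By Corollary \ref{mhstipohomotopia} there is a functor $\Hh dg\colon\Sch{}{\CC}\to\Ho(\MHD)$, built from Navarro's multiplicative Deligne functor together with the cohomological descent of mixed Hodge diagrams, whose rational component in the homotopy category is the Sullivan-de Rham functor. By Theorem \ref{leftderivedMHD} there is a functor $\pi^*\colon\Ho(\MHD^1)\to\mathbf{G}^+(\MHS)$ sending a $1$-connected mixed Hodge diagram $A$ to the graded mixed Hodge structure $(Q(M_\QQ),\Dec W,F)$, where $M$ is a cofibrant minimal model of $A$. The corollary will follow by applying $\pi^*\circ\Hh dg$ and reinterpreting its output.

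First I would check that for $X$ simply connected the mixed Hodge diagram $\Hh dg(X)$ belongs to $\MHD^1$: its rational component computes $H^*(X;\QQ)$, so $H^0=\QQ$ and $H^1=0$, which is exactly the $1$-connectedness hypothesis. Hence $\pi^*(\Hh dg(X))$ is defined, and it is functorial in $X$, being a composition of functors.

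Next I would identify the underlying graded rational vector space of $\pi^*(\Hh dg(X))$ with the rational homotopy of $X$. By Corollary \ref{mhstipohomotopia} a cofibrant minimal model $M$ of $\Hh dg(X)$ has rational component $M_\QQ$ equal to a Sullivan minimal model of the de Rham algebra of $X$; thus $M_\QQ$ is a minimal model of $X$ in the classical sense, and Sullivan's theory supplies a natural isomorphism $Q(M_\QQ)^n\cong\Hom_\QQ(\pi_n(X)\otimes\QQ,\QQ)$ for every $n\ge 2$. Dualizing the mixed Hodge structure that $Q(M_\QQ)^n$ carries therefore endows $\pi_n(X)\otimes\QQ$ with a mixed Hodge structure, and functoriality is inherited from that of $\Hh dg$ and $\pi^*$. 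For $1$-connected objects there is no loss in passing between $\pi^h\MHSA$ and honest homotopy classes, since every homotopy between augmented morphisms of $1$-connected minimal dga's is augmented, as already used in the proof of Theorem \ref{leftderivedMHD}.

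The only delicate point is the compatibility, at the level of morphisms, of the abstract indecomposables functor with the classical duality between $Q(M_\QQ)$ and $\pi_*(X)\otimes\QQ$ — that is, that a morphism of varieties induces on classical minimal models, up to homotopy, the transpose of its effect on rational homotopy groups. This is standard Sullivan theory, and I expect it, rather than any genuinely new argument, to be the main piece of bookkeeping; everything else is formal given Theorems \ref{MHDCE} and \ref{leftderivedMHD}. In this way one recovers the theorems of Morgan and Navarro within the present homotopical framework.
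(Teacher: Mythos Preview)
Your proposal is correct and follows essentially the same route as the paper's own proof: compose the functor $\Hh dg:\Sch{}{\CC}\to\Ho(\MHD)$ from Corollary~\ref{mhstipohomotopia} with $\pi^*$ from Theorem~\ref{leftderivedMHD}, restricted to simply connected varieties, and identify the rational component with the rational homotopy groups. You spell out more carefully than the paper does the verification that $\Hh dg(X)\in\MHD^1$ and the Sullivan duality $Q(M_\QQ)^n\cong\Hom_\QQ(\pi_n(X)\otimes\QQ,\QQ)$, but no genuinely different idea is involved.
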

\begin{proof}
As in the proof of Corollary $\ref{mhstipohomotopia}$ 
we have a functor
$\Hh dg:\Sch{}{\CC}\to \Ho\left(\MHD\right)$ whose rational component is the Sullivan-de Rham functor.
By Theorem $\ref{leftderivedMHD}$ this gives a functor
$\pi_*:=\pi^*\circ \Hh dg:\Sch{1}{\CC}\to\mathbf{G}^+(\MHS)$ with values in the category of graded mixed Hodge structures.
The rational components of $\pi_*(X)$ are the rational homotopy groups of $X$.
\end{proof}

\section*{Acknowledgments}
I would like to thank F. Guill\'{e}n
for invaluable support and helpful suggestions
and V. Navarro for sharing his
insightful ideas.

\bibliographystyle{amsalpha}
\bibliography{../bibliografia}

\providecommand{\bysame}{\leavevmode\hbox to3em{\hrulefill}\thinspace}
\providecommand{\MR}{\relax\ifhmode\unskip\space\fi MR }
\providecommand{\MRhref}[2]{%
  \href{http://www.ams.org/mathscinet-getitem?mr=#1}{#2}
}
\providecommand{\href}[2]{#2}
\begin{thebibliography}{DGMS75}

\bibitem[Bau77]{Ba2}
H.~J. Baues, \emph{Obstruction theory on homotopy classification of maps},
  Lecture Notes in Mathematics, Vol. 628, Springer-Verlag, Berlin, 1977.

\bibitem[Bau89]{Ba}
\bysame, \emph{Algebraic homotopy}, Cambridge Studies in Advanced Mathematics,
  vol.~15, Cambridge University Press, Cambridge, 1989.

\bibitem[Bei86]{Be}
A.~A. Beilinson, \emph{Notes on absolute {H}odge cohomology}, Applications of
  algebraic {$K$}-theory to algebraic geometry and number theory, vol.~55,
  1986, pp.~35--68.

\bibitem[BG76]{BG}
A.~K. Bousfield and V.~K. A.~M. Gugenheim, \emph{On {${\rm PL}$} de {R}ham
  theory and rational homotopy type}, Mem. Amer. Math. Soc. \textbf{8} (1976),
  no.~179.

\bibitem[Bro73]{Br}
K.~S. Brown, \emph{Abstract homotopy theory and generalized sheaf cohomology},
  Trans. Amer. Math. Soc. \textbf{186} (1973), 419--458.

\bibitem[CG12]{CG1}
J.~Cirici and F.~Guill\'en, \emph{${E}_1$-formality of complex algebraic
  varieties}, Preprint, arXiv:math/1212.3955 [math.AG] (2012).

\bibitem[CG13]{CG2}
\bysame, \emph{Homotopy theory of mixed {H}odge complexes}, Preprint,
  arXiv:math/1304.6236 [math.AG] (2013).

\bibitem[Cis10]{Ci}
D-C. Cisinski, \emph{Cat\'egories d\'erivables}, Bull. Soc. Math. France
  \textbf{138} (2010), no.~3, 317--393.

\bibitem[Del71]{DeHII}
P.~Deligne, \emph{Th\'eorie de {H}odge. {II}}, Inst. Hautes \'Etudes Sci. Publ.
  Math. (1971), no.~40, 5--57.

\bibitem[Del74]{DeHIII}
\bysame, \emph{Th\'eorie de {H}odge. {III}}, Inst. Hautes \'Etudes Sci. Publ.
  Math. (1974), no.~44, 5--77.

\bibitem[DGMS75]{DGMS}
P.~Deligne, P.~Griffiths, J.~Morgan, and D.~Sullivan, \emph{Real homotopy
  theory of {K}\"ahler manifolds}, Invent. Math. \textbf{29} (1975), no.~3,
  245--274.

\bibitem[DS95]{DS}
W.~G. Dwyer and J.~Spali{\'n}ski, \emph{Homotopy theories and model
  categories}, Handbook of algebraic topology, North-Holland, Amsterdam, 1995,
  pp.~73--126.

\bibitem[FHT01]{FHT}
Y.~F{\'e}lix, S.~Halperin, and J-C. Thomas, \emph{Rational homotopy theory},
  Graduate Texts in Mathematics, vol. 205, Springer-Verlag, New York, 2001.

\bibitem[GM81]{GM}
P.~Griffiths and J.~W. Morgan, \emph{Rational homotopy theory and differential
  forms}, Progress in Mathematics, vol.~16, Birkh\"auser Boston, Mass., 1981.

\bibitem[GN02]{GN}
F.~Guill{\'e}n and V.~Navarro, \emph{Un crit\`ere d'extension des foncteurs
  d\'efinis sur les sch\'emas lisses}, Publ. Math. Inst. Hautes \'Etudes Sci.
  (2002), no.~95, 1--91.

\bibitem[GNPR10]{GNPR}
F.~Guill{\'e}n, V.~Navarro, P.~Pascual, and A.~Roig, \emph{A
  {C}artan-{E}ilenberg approach to homotopical algebra}, J. Pure Appl. Algebra
  \textbf{214} (2010), no.~2, 140--164.

\bibitem[Hai87]{Ha}
R.~M. Hain, \emph{The de {R}ham homotopy theory of complex algebraic varieties.
  {II}}, $K$-Theory \textbf{1} (1987), no.~5, 481--497.

\bibitem[Hov99]{Hov}
M.~Hovey, \emph{Model categories}, Mathematical Surveys and Monographs,
  vol.~63, American Mathematical Society, Providence, RI, 1999.

\bibitem[HT90]{HT}
S.~Halperin and D.~Tanr{\'e}, \emph{Homotopie filtr\'ee et fibr\'es {$C\sp
  \infty$}}, Illinois J. Math. \textbf{34} (1990), no.~2, 284--324.

\bibitem[KP97]{KP}
K.~H. Kamps and T.~Porter, \emph{Abstract homotopy and simple homotopy theory},
  World Scientific Publishing Co. Inc., River Edge, NJ, 1997.

\bibitem[Mor78]{Mo}
J.~W. Morgan, \emph{The algebraic topology of smooth algebraic varieties},
  Inst. Hautes \'Etudes Sci. Publ. Math. (1978), no.~48, 137--204.

\bibitem[Nav87]{Na}
V.~Navarro, \emph{Sur la th\'eorie de {H}odge-{D}eligne}, Invent. Math.
  \textbf{90} (1987), no.~1, 11--76.

\bibitem[Qui67]{Q1}
D.~Quillen, \emph{Homotopical algebra}, Lecture Notes in Mathematics, No. 43,
  Springer-Verlag, Berlin, 1967.

\bibitem[RB07]{RB}
A.~Radulescu-Banu, \emph{Cofibrations in homotopy theory}, Preprint,
  arXiv:math/0610009v4 [math.AT] (2007).

\bibitem[Sul77]{Su}
D.~Sullivan, \emph{Infinitesimal computations in topology}, Inst. Hautes
  \'Etudes Sci. Publ. Math. (1977), no.~47, 269--331 (1978).

\bibitem[Tho79]{Th}
R.~W. Thomason, \emph{Homotopy colimits in the category of small categories},
  Math. Proc. Cambridge Philos. Soc. \textbf{85} (1979), no.~1, 91--109.

\bibitem[Tot02]{To}
B.~Totaro, \emph{Topology of singular algebraic varieties}, Proceedings of the
  {I}nternational {C}ongress of {M}athematicians, {V}ol. {II} ({B}eijing,
  2002), Higher Ed. Press, 2002, pp.~533--541.

\end{thebibliography}

\end{document}